\documentclass[12pt]{amsart}
\usepackage{amscd,amsmath,amsthm,amssymb,graphics}

\usepackage{amsmath}
\usepackage{amssymb}
\usepackage{amsbsy}
\usepackage{graphicx}
\usepackage{amsfonts}

\usepackage{amscd,amsmath,amsthm,amssymb}
\usepackage{pstcol,pst-plot,pst-3d}
\usepackage{lmodern,pst-node}

\usepackage{pgf,tikz}
\usetikzlibrary{arrows}

\unitlength=0.7cm

\newcommand{\bm}[1]{\mbox{\boldmath{$#1$}}}

\def\NZQ{\Bbb}               

\def\ZZ{{\NZQ Z}}
\def\RR{{\NZQ R}}

%
%
\def\frk{\frak}               

\def\Phi{{\frk n}}
\def\Phi{{\frk N}}
%

\def\MP{{\mathcal P}}

\def\MM{{\mathcal M}}

%
\def\ab{{\bold a}}
\def\bb{{\bold b}}
\def\xb{{\bold x}}

\def\tb{{\bold t}}
\def\wb{{\bold w}}
\def\ub{{\bold u}}
\def\Pb{{\bold P}}
\def\vb{{\bold v}}
\def\opn#1#2{\def#1{\operatorname{#2}}} 
%
\opn\chara{char} \opn\length{\ell} \opn\pd{pd} \opn\rk{rk}
\opn\projdim{proj\,dim} \opn\injdim{inj\,dim} \opn\rank{rank}
\opn\depth{depth} \opn\grade{grade} \opn\height{height}
\opn\embdim{emb\,dim} \opn\codim{codim}

\opn\Tr{Tr} \opn\bigrank{big\,rank}
\opn\superheight{superheight}\opn\lcm{lcm}
\opn\trdeg{tr\,deg}
\opn\reg{reg} \opn\lreg{lreg} \opn\ini{in} \opn\lpd{lpd}
\opn\size{size}\opn\bigsize{bigsize}
\opn\cosize{cosize}\opn\bigcosize{bigcosize}
\opn\sdepth{sdepth}\opn\sreg{sreg}
\opn\link{link}\opn\fdepth{fdepth}
\opn\index{index}
\opn\index{index}
\opn\indeg{indeg}
\opn\N{N}
\opn\SSC{SSC}
\opn\SC{SC}
\opn\conv{conv}
%
\opn\div{div} \opn\Div{Div} \opn\cl{cl} \opn\Cl{Cl}
%
%
\opn\Spec{Spec} \opn\Supp{Supp} \opn\supp{supp} \opn\Sing{Sing}
\opn\Ass{Ass} \opn\Min{Min}\opn\Mon{Mon} \opn\dstab{dstab} \opn\astab{astab}
\opn\Syz{Syz}
\opn\reg{reg}
%
%
\opn\Ann{Ann} \opn\Rad{Rad} \opn\Soc{Soc}
%
%
\opn\Im{Im} \opn\Ker{Ker} \opn\Coker{Coker} \opn\Am{Am}
\opn\Hom{Hom} \opn\Tor{Tor} \opn\Ext{Ext} \opn\End{End}
\opn\Aut{Aut} \opn\id{id}

\opn\nat{nat}
\opn\pff{pf}
\opn\Pf{Pf} \opn\GL{GL} \opn\SL{SL} \opn\mod{mod} \opn\ord{ord}
\opn\Gin{Gin} \opn\Hilb{Hilb}\opn\sort{sort}
\opn\initial{init}
\opn\ende{end}
\opn\height{height}
\opn\type{type}
%
%
\opn\aff{aff} \opn\con{conv} \opn\relint{relint} \opn\st{st}
\opn\lk{lk} \opn\cn{cn} \opn\core{core} \opn\vol{vol}
\opn\link{link} \opn\star{star}\opn\lex{lex}\opn\Mon{Mon}\opn\Min{Min}
\opn\gr{gr}

%
%

\def\pot#1#2{#1[\kern-0.28ex[#2]\kern-0.28ex]}

%
%
\opn\dirlim{\underrightarrow{\lim}}
\opn\inivlim{\underleftarrow{\lim}}
%
%
%
\let\union=\cup
\let\sect=\cap

%
%
\let\to=\rightarrow

\def\Implies{\ifmmode\Longrightarrow \else
        \unskip${}\Longrightarrow{}$\ignorespaces\fi}
\def\implies{\ifmmode\Rightarrow \else
        \unskip${}\Rightarrow{}$\ignorespaces\fi}
\def\iff{\ifmmode\Longleftrightarrow \else
        \unskip${}\Longleftrightarrow{}$\ignorespaces\fi}

\let\:=\colon
\newtheorem{Theorem}{Theorem}[section]
 \newtheorem{Lemma}[Theorem]{Lemma}
 \newtheorem{Corollary}[Theorem]{Corollary}
 \newtheorem{Proposition}[Theorem]{Proposition}
 \newtheorem{Remark}[Theorem]{Remark}
 
 \newtheorem{Example}[Theorem]{Example}
 
 \newtheorem{Definition}[Theorem]{Definition}
 \newtheorem*{Definition*}{Definition}
 
 \newtheorem{Conjecture}{Conjecture}
 \newtheorem*{Conjecture*}{Conjecture}

\newtheorem{Running Example}[Theorem]{Running Example}

 \newtheorem{Discussion}[Theorem]{Discussion}

\let\epsilon\varepsilon
\let\kappa=\varkappa
%
%
\textwidth=15cm \textheight=22cm \topmargin=0.5cm
\oddsidemargin=0.5cm \evensidemargin=0.5cm \pagestyle{plain}
%
%
\def\qed{\ifhmode\textqed\fi
      \ifmmode\ifinner\quad\qedsymbol\else\dispqed\fi\fi}
\def\textqed{\unskip\nobreak\penalty50
       \hskip2em\hbox{}\nobreak\hfil\qedsymbol
       \parfillskip=0pt \finalhyphendemerits=0}
\def\dispqed{\rlap{\qquad\qedsymbol}}

%
\opn\dis{dis}
\def\pnt{{\raise0.5mm\hbox{\large\bf.}}}

\opn\Lex{Lex}



\begin{document}

 \title{Discrete polymatroids  satisfying a stronger symmetric exchange property}

 \author {Dancheng Lu}

 \begin{abstract}
In this paper we introduce discrete polymatroids satisfying the one-sided strong exchange property and show that they are  sortable (as a consequence their base rings are Koszul) and that they satisfy White's conjecture. Since any pruned lattice path polymatroid satisfies the one-sided strong exchange property, this result provides an alternative proof for  one of the main theorems of J. Schweig in \cite{Sc}, where it is shown that  every pruned lattice path polymatroid  satisfies White's conjecture. In addition  we characterize a class of such polymatroids whose base rings are Gorenstein. Finally   for two classes of pruned  lattice path polymatroidal ideals $I$ and their powers we  determine their depth and their associated prime ideals, and furthermore determine the least power $k$ for which $\depth S/I^k$ and $\Ass(S/I^k)$ stabilize.  It turns out that  $\depth S/I^k$ stabilizes precisely when $\Ass(S/I^k)$ stabilizes  in both cases.
 \end{abstract}

\subjclass[2010]{Primary 13D02, 13C13; Secondary 05E40.}
\keywords{White's conjecture, Gorenstein, Pruned lattice path polymatroid, Linear quotient, Depth}
\address{Dancheng Lu, Department of Mathematics, Soochow University, P.R.China} \email{ludancheng@suda.edu.cn}
\thanks{The paper was written while the  author was visiting the Department of Mathematics of
University Duisburg-Essen. He wants to express his thanks for its hospitality.}

 \maketitle

\section*{Introduction}

Throughout this paper, we always denote vectors in boldface such as $\ub,\vb, \ub_i,\vb_i, {\bm \alpha}$ and etc. If $\ub$ is a vector in $\ZZ^n$, we  use either $u_i$ or $\ub(i)$ to denote its $i$th entry and use $\ub(A)$ to denote the number $\sum_{i\in A}u_i$ for a subset $A\subseteq \{1,\ldots,n\}.$  Let $k,\ell$  be integers with $k\leq \ell$. Then $[k,\ell]$ denotes the interval $\{k,k+1,\ldots, \ell\}$ and $[1,k]$ is usually denoted by $[k]$ for short. Also we denote by ${\bm \epsilon}_1,\ldots, {\bm \epsilon}_n$  the canonical basis of $\ZZ^n$ and by $\ZZ_+$ the set of non-negative integers. The set $\ZZ_+^n$ has a partial ordering $\leq$ defined by:
\begin{center}$\ub\leq \vb$ $\Longleftrightarrow$ $u_i\leq v_i$ for each $i=1,\ldots,n.$
\end{center}

Unless otherwise stated, $S$ always stands for the polynomial ring $K[x_1,\ldots,x_n]$ over a field $K$. For a subset $A\subseteq [n]$, $P_A$  denotes the monomial prime ideal $(x_i\:\; i\in A)$ of $S$.

\vspace{2mm}
For the basic knowledge of matroids we refer to \cite{O}. In \cite{HH},  discrete polymatroids are introduced, which generalize matroids in the way that
monomial  ideals generalize squarefree monomial ideals.

\medskip
A {\it discrete polymatroid} on the ground set $[n]$ is a nonempty finite set $\Pb\subseteq \ZZ_+^n$ satisfying
\begin{enumerate}
\item[(D1)] if $\mathbf{u}=(u_1,\ldots,u_n)\in \Pb$ and $\mathbf{v}=(v_1,\ldots,v_n)\in \ZZ_+^n$ with $\mathbf{v}\leq \mathbf{u}$, then $\mathbf{v}\in \Pb$;

\item[(D2)] if $\mathbf{u}=(u_1,\ldots, u_n)\in \Pb$ and $\mathbf{v}=(v_1,\ldots,v_n)\in \Pb$ with $|\mathbf{u}|<|\mathbf{v}|$, then there is $i\in [n]$ with $u_i<v_i$ such that $\mathbf{u}+{\bm \epsilon}_i\in \Pb$. Here $|\mathbf{u}|:=\ub([n])$.

\end{enumerate}

\medskip
A {\it base} of a discrete polymatroid $\Pb$ is a vector $\mathbf{u}$ of $\Pb$ such that $\mathbf{u}<\mathbf{v}$ for no $\mathbf{v}\in \Pb$. Every base of $\Pb$ has the same modulus $\mathrm{rank}(\Pb)$, the {\em rank} of $\Pb$.  Let $B(\Pb)$ or simply $B$ denote the set of bases of $\Pb$.  Every discrete polymatroid satisfies the following symmetric exchange property: if $\mathbf{u}$ and $\mathbf{v}$ are vectors of $B$, then for  any $i\in [n]$ with $u_i<v_i$, there is $j\in [n]$ with $u_j>v_j$ such that both $\mathbf{u}+{\bm \epsilon}_i-{\bm \epsilon}_j$ and $\mathbf{v}-{\bm \epsilon}_i+{\bm \epsilon}_j$ belong to $B$.  Conversely if $B$ is a set of vectors in $\ZZ_+^n$ of the same modulus satisfying the symmetric exchange property, then $\Pb=\{\ub\in \ZZ_+^n\:\; \ub\leq \vb \mbox{\ for some\ } \vb\in B\}$ is a discrete polymatroid with $B$ as its set of bases.

\medskip

We are interested in two algebraic structures associated with a discrete polymatroid $\Pb$: its {\it  base ring}   and its {\it polymatroidal ideal}. Let $K$ be a field. The base ring $K[B(\Pb)]$ (or simply $K[B]$) of $\Pb$ is defined  to be the subring of $K[t_1,\ldots,t_n]$ generated by monomials $\mathbf{t}^\mathbf{u}=t_1^{u_1}\ldots t_n^{u_n}$ with $\mathbf{u}\in B$. Meanwhile the polymatroidal ideal of $\Pb$ is defined to be the  monomial ideal in $S=K[x_1,\ldots,x_n]$ generated by $\mathbf{x}^\mathbf{u}=x_1^{u_1}\ldots x_n^{u_n}$ with $\mathbf{u}\in B$.
\medskip

 Let $T$ be the polynomial ring $K[x_\mathbf{u}:\mathbf{u}\in B]$ and let $I_B$ be the kernel of the $K$-algebra homomorphism $\phi:T\rightarrow K[B]$ with $\phi(x_\mathbf{u})=\mathbf{t}^\mathbf{u}$ for any $\ub\in B$. There are some obvious generators in $I_B$. Indeed,  let $\mathbf{u},\mathbf{v}\in B$ with $u_i>v_i$. Then there exists $j$ such that  $u_j<v_j$  and such that $\mathbf{u}-{\bm \epsilon}_i+{\bm \epsilon}_j$ and $\mathbf{v}+{\bm \epsilon}_i-{\bm \epsilon}_j$ belong to $B$.  We see  that $x_\mathbf{u}x_\mathbf{v}-x_{\mathbf{u}-{\bm \epsilon}_i+{\bm \epsilon}_j}x_{\mathbf{v}+{\bm \epsilon}_i-{\bm \epsilon}_j}\in I_B$. Such relations are  called {\it symmetric  exchange relations.}  White \cite{W} conjectured that for a matroid  the symmetric exchange relations generate $I_B$. In \cite{HH}, Herzog and Hibi predicted  that
this also holds for discrete polymatroids.

\begin{Conjecture*}[White, Herzog-Hibi]
Let $\Pb$ be a discrete polymatroid on the ground set $[n]$ with $B$ as its set of bases. Then $I_B$ is generated by symmetric  exchange relations.
\end{Conjecture*}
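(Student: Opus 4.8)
The plan is to attack White's conjecture through a mix of structural reductions and a Gr\"obner-basis (``sorting'') argument, with the understanding that the statement as worded is a long-standing open problem, so a realistic plan settles it for as broad a family of polymatroids as possible and pins down exactly where the general case resists.

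First I would fix notation on the algebra side. Let $J\subseteq T$ be the ideal generated by all symmetric exchange relations $x_{\ub}x_{\vb}-x_{\ub-{\bm \epsilon}_i+{\bm \epsilon}_j}x_{\vb+{\bm \epsilon}_i-{\bm \epsilon}_j}$. Then $J\subseteq I_B$ is automatic, and the whole content is the reverse inclusion $I_B\subseteq J$. Two reductions seem available. (1) \emph{Direct sums}: if, after relabelling $[n]$, $\Pb=\Pb_1\oplus\Pb_2$, then $K[B]$ is a quotient of the Segre product of $K[B_1]$ and $K[B_2]$, and one should be able to show $I_B$ is generated by $I_{B_1}$, $I_{B_2}$ and the obvious quadrics forced by the Segre structure --- each of which is itself a symmetric exchange relation --- so that it suffices to treat connected $\Pb$. (2) \emph{Deletion--contraction}: for $e\in[n]$ both the deletion $\Pb\setminus e$ and the contraction $\Pb/e$ are discrete polymatroids on a smaller ground set, which suggests an induction on $n$ together with $\mathrm{rank}(\Pb)$, feeding back relations from the minors.

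The engine for the base of such an induction would be sorting. Given monomials $\tb^{\ub},\tb^{\vb}$ of the same degree, list the multiset of their variables in weakly increasing order and split it alternately to get $\sort(\ub,\vb)=(\ub',\vb')$; if $\Pb$ has the property that $\ub,\vb\in B$ implies $\ub',\vb'\in B$, then the sorting relations $x_{\ub}x_{\vb}-x_{\ub'}x_{\vb'}$ form a quadratic Gr\"obner basis of $I_B$ for a suitable sorting order, and, crucially, each of them factors as a composition of single exchanges that never leave $B$ and hence lies in $J$; White's conjecture follows for $\Pb$. For matroids specifically, I would instead plug the already-known affirmative cases (graphic, transversal, lattice-path, sparse paving) into the deletion--contraction induction as base cases.

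I expect the main obstacle to be that a general connected matroid, let alone polymatroid, is \emph{not} sortable, and the deletion--contraction step does not close: a minimal generator of $I_B$ may involve bases whose symmetric differences straddle $e$, so it need not restrict to either minor, while conversely the relations lifted from $\Pb\setminus e$ and $\Pb/e$ only recover $I_B$ after saturating by the variables attached to $e$ --- the ``saturation gap'' familiar from the work of Las\'{o}n and Micha\l{}ek. Closing that gap in full generality is the part I do not expect to achieve; what the plan can realistically produce is a clean one-sided strengthening of the symmetric exchange property under which the sorting argument goes through uniformly, which proves the conjecture for that class (it contains the pruned lattice path polymatroids) and in particular reproves Schweig's theorem from \cite{Sc}.
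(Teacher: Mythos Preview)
The statement you are addressing is, in this paper, explicitly a \emph{conjecture}: the paper does not claim to prove White's conjecture in general, and you correctly anticipate this. Your ``realistic'' endpoint --- isolating a one-sided strengthening of the symmetric exchange property and showing, via sorting, that $I_B$ is generated by exchange relations for that class --- is exactly the content of the paper's main result in Section~1 (the theorem following Definition~\ref{Def1}). Your remark that this class contains the pruned lattice path polymatroids, thereby reproving Schweig's theorem, is precisely the application carried out in Section~2.

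The additional machinery you sketch (direct-sum reduction, deletion--contraction induction, feeding in known affirmative cases for graphic, transversal, and sparse-paving matroids) is not pursued in the paper at all. As you yourself diagnose, the deletion--contraction step does not close because of the saturation gap, so these reductions do not buy a proof of the general conjecture beyond what the sorting argument already delivers. The paper's approach is narrower and more direct: it does not attempt any structural induction on $\Pb$, but goes straight to the one-sided property, shows that any pair $(\ub,\vb)\in B\times B$ can be driven to its sorted form by a chain of symmetric exchanges \emph{within} $B$ (which simultaneously proves sortability), and then invokes \cite[Lemma~5.2]{HH} to conclude that the sorting relations are a Gr\"obner basis of $I_B$ and hence $I_B=E_B$. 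So the paper and your plan agree on the achievable core; the paper simply omits the speculative outer layers that you already flag as not closing.
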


We will refer this conjecture as White's conjecture hereafter.  In \cite{HH} it is shown  that if White's conjecture holds for all matroids, then it holds for all discrete polymatroids as well, and that any discrete  polymatroid satisfying the strong symmetric exchange property satisfies  White's conjecture. Recall that a discrete polymatroid $\Pb$ is said to have  the {\it strong symmetric exchange property} if for any bases $\mathbf{u}$ and $\mathbf{v}$ of $\Pb$ with $u_i<v_i$ and $u_j>v_j$,
both $\mathbf{u}+{\bm \epsilon}_i-{\bm \epsilon}_j$ and $\mathbf{v}-{\bm \epsilon}_i+{\bm \epsilon}_j$ are bases of $\Pb$.

\medskip
In \cite{Sc} J. Schweig introduce {\em pruned lattice path polymatroids} and prove that they satisfy White's conjecture.  He actually prove that the symmetric exchange relations form a Gr\"obner basis of $I_B$ for  such discrete polymatroids.

\medskip
In Section~1 we introduce discrete polymatroids satisfying the one-sided strong symmetric exchange property (see Definition~\ref{Def1}) and show that they are  sortable and that they satisfy White's conjecture. It is known by \cite[Lemma 5.2]{HH}  that the sorting relations form a Gr\"obner basis of the defining ideal $I_B$ in this case. As a consequence,  the base ring of  a discrete polymatroid satisfying the one-sided strong symmetric exchange property is Koszul.

\medskip
A pruned lattice path polymatroid may be described  in terms of inequalities of the  set of its bases as follows:

\begin{Definition*}
\label{defpruned}
\em Let $n,d$ be positive integers, and given vectors  $\mathbf{a}, \mathbf{b}, {\bm \alpha},{\bm \beta}$ in $\ZZ_+^n$ such that $\ab\leq \bb$, ${\bm \alpha}\leq {\bm \beta}$ and $\alpha_1\leq \cdots\leq \alpha_n=d$,
 $\beta_1\leq \cdots\leq \beta_n=d$.  A discrete polymatroid $\Pb$ on the ground set $[n]$ is called a {\em pruned lattice path polymatroid} or simply a {\em PLP-polymatroid} (of type $(\mathbf{a},\mathbf{b}|{\bm \alpha},{\bm \beta})$), if the set $B$ of its bases consists of vectors $\ub\in \ZZ_{+}^n$ such that
 \[
a_i \leq u_i\leq b_i  \quad \text{for $i=1,\ldots,n$},
\]
and
\[
  \alpha_{i} \leq u_1+u_{2}+\cdots +u_i\leq \beta_{i} \quad \text{for $i=1,\ldots,n$}.
\]
\end{Definition*}

If $a_i=0$ and $b_i=d$ for $i=1,\ldots, n$, then the first $n$ inequalities can be dropped in this definition. In this case, $\Pb$ is  nothing but the {\em lattice path polymatroid} ({\em LP-polymatroid} for short) discussed in \cite{Sc} and \cite{Sc1}.

\vspace{2mm}
   Taking advantage of this definition  we   prove in Section~2 that a PLP-polymatroid satisfies the two-sided strong symmetric exchange property and thus satisfies White's Conjecture.  We find an example of discrete polymatroid satisfying the two-sided strong symmetric exchange property which is not a PLP-polymatroid. However this example is isomorphic to a PLP-polymatroid. On the other hand, we show for some special discrete polymatroids  that  the one-sided strong  exchange property implies the property of being a PLP-polymatroid. The precise relationship between  polymatroids satisfying the two-sided strong symmetric exchange property and  pruned lattice path polymatroid remains to be revealed.

   \vspace{2mm}
  We have known that the base ring $K[B(\MP)]$ of  every discrete polymatroid $\MP$ is always normal, see e.g. \cite[Theorem 12.5.1]{HHBook} and thus Cohen-Macaulay. It is then natural to ask when those rings are Gorenstein. However it seems quite difficult to  obtain a perfect answer to this problem. In \cite{DH}, the Gorenstein algebra of Veronese  type was classified. Note that the algebra of Veronese type is the base ring of a  PLP-polymatroid of type $(\mathbf{0},\mathbf{b}|\mathbf{0}, (d,\ldots,d))$ for some $\bb\in \ZZ_+^n$ and $0<d\in \ZZ_+$.   In \cite{HH},  generic discrete polymatroids were  introduced and all such discrete polymatroids whose base rings are Gorenstein  were characterized.  In Section~3, we will give a  characterization of  a special  class of PLP-polymatroids  which have  Gorenstein base rings.

\medskip
From Section~4 on, we will turn to investigate the algebraic  properties of  polymatroidal ideals for some certain  PLP-polymatroids.

\medskip
In Section 4, we deduce  a formula to compute the depth for a PLP-polymatroidal ideal, see (\ref{nu}). This formula plays a key role in the last two sections.
 As an immediate application, we determine in Proposition~\ref{lattice} the associated prime ideals of a LP-polymatroidal ideal. This result  will be repeatedly  used in what follows.

\medskip

In the remaining two sections we consider special classes of PLP-polymatroidal ideals, where the questions concerning depth and associated prime ideals of powers of ideals have  complete answers.

\medskip
The ideals considered in Section~5 are called left PLP-polymatroidal ideals. We say that a PLP-polymatroidal ideal of type $(\ab,\bb|{\bm \alpha},{\bm \beta})$ is  a {\em left PLP-polymatroidal ideal}, if  there exists $k\in [n-1]$ such that $a_i=0$, $b_i\geq d$ for all $k+1 \leq i\leq n$ and $\alpha_i=0$, $\beta_i=\beta_{k+1}$ for $1\leq i\leq k$. We first show in Proposition~\ref{depthleft} that $\depth S/I=|\{k+1\leq i\leq n-1\:\;\alpha_i=\beta_i\}|$, and  thus the depth function $\depth S/I^k$ stabilizes from the very beginning.

\medskip
The main tool to determine  the associated prime ideals of our ideals  is {\em monomial localization}. By using this technique it suffices  to characterize  when a suitable  localization of the ideal has  depth zero. This can be checked with formula~(\ref{nu}). Given a subset $A$ of $[1,n]$.  It turns out, see Corollary~\ref{possible}, that $P_A\in \mathrm{Ass}(S/I)$ only if  $A$ is  an interval contained in $[k+2,n]$ or $A=B\cup [k+1,n]$ for some subset $B$ of $[k]$ with $k$ as in the definition of left PLP-polymatroidal ideals. In Theorem~\ref{howtowrite} the precise set of associated prime ideals of $S/I$ is described. As a consequence we obtain in Corollary~\ref{astableft} that  all powers of a left PLP-polymatroidal ideal have the same set of associated prime ideals.

\medskip

In Section~6, we consider {\em right PLP-polymatroidal ideals.} A PLP-polymatroidal ideal of type $(\ab,\bb|{\bm \alpha},{\bm \beta})$ is called a right PLP-polymatroidal ideal, if  there exists $k\in [n-1]$ such that $a_i=0$ and  $b_i\geq d$ for all $1\leq i\leq k$, and $\alpha_i=\alpha_k$ and  $\beta_i=d$ for all $k+1\leq i\leq n-1$. To determine the depth and the associated prime ideals for this class of ideals is  more complicated than  in the case of left one.  For example, if $I$ is a right PLP-polymatroidal ideal  then $\Ass(S/I^k)$ is not stable from the very beginning. The precise power when this happens is given in Theorem~\ref{astabright}. In Proposition~\ref{assright2} the set $\Ass^\infty(S/I)$, which describes the set of associated prime ideals of all large powers of $I$,  is determined. A formula  for the depth of $I$  is given  in Theorem~\ref{depth2} and the powers for which the depth stabilizes is given in Corollary~\ref{dstabright}.

\medskip As observed in \cite{HV},  every polymatroidal ideal is of strong intersection type, that is, it is the intersection of some powers of  its associated  prime ideals. Thus  we can use the results obtained in Section 5 and Section 6 to provide  an  irredundant primary decomposition for any  left or right PLP-polymatroidal ideal.

\medskip
Since the left and right pruned  lattice path polymatroidal ideals have shown such a different algebraic behaviour it is not expected that there is a nice and uniform description of the depth and the set of associated prime ideals for arbitrary PLP-polymatroidal ideals.

\section{The one-sided strong symmetric exchange property and the conjecture of White}

In this section  we introduce the concept of the one-sided strong symmetric exchange property for discrete polymatroids and show that such discrete  polymatroids are sortable and  satisfy White's conjecture.

\begin{Definition}
\label{Def1}
{\em Let $\Pb$ be a discrete polymatroid on the ground set $[n]$ with $B$ as its set of bases. Then we say that $B$  (or $\Pb$) satisfies the {\em left-sided strong symmetric exchange property}, if for any pair $\ub,\vb\in B$ such that  $\ub(i)>\vb(i)$ and  $\ub(1)+\cdots+\ub(i-1)<\vb(1)+\cdots+\vb(i-1)$, there exists  $j\leq i-1$  such that $\ub(j)<\vb(j)$ and  both $\ub-{\bm \epsilon}_i+{\bm \epsilon}_j$ and $\vb+{\bm \epsilon}_i-{\bm \epsilon}_j$ belong to  $B$. }
\end{Definition}

Similarly, the right-sided strong symmetric exchange property is defined. If $B$ or $\Pb$ satisfies both the  left-sided and right-sided strong symmetric exchange property,  we say that  $B$ or $\Pb$ satisfies the two-sided strong symmetric exchange property. Recall that  a discrete polymatroid $\Pb$ is said to have  the {\it strong symmetric exchange property} if for any bases $\mathbf{u}$ and $\mathbf{v}$ of $\Pb$ with $u_i<v_i$ and $u_j>v_j$,
both $\mathbf{u}+{\bm \epsilon}_i-{\bm \epsilon}_j$ and $\mathbf{v}-{\bm \epsilon}_i+{\bm \epsilon}_j$ are bases of $\Pb$. Hence  the two-sided strong symmetric exchange property does not imply the strong symmetric exchange property
\medskip

We also say that a monomial ideal $I$ satisfies the  left-, right- or two-sided strong symmetric exchange property, if it is the polymatroidal ideal of a discrete polymatroid which has this property.

\medskip
In the following theorem we will show if  $B$ satisfies one-sided (left-sided or right-sided) strong symmetric exchange property, then $B$ is sortable and $I_B$ is generated by symmetric exchange relations.

\medskip
For the proof of this result we need some preparations. First we recall the notion of sortability, which was introduced by
Sturmfels \cite{St}.

\vspace{2mm}

 Let $\ub,\vb\in B$, and write $\tb^{\ub}\tb^{\vb}=t_{i_1}t_{i_1}\ldots t_{i_{2d}}$ with $i_1\leq i_2\leq \ldots \leq i_{2d}$. Here $d$ is the rank of $\Pb$.  Then we set $\tb^{\ub'}=t_{i_1}t_{i_3}\ldots t_{i_{(2d-1)}}$ and $\tb^{\vb'}=t_{i_2}t_{i_4}\ldots t_{i_{(2d)}}$. This defines a map:
 $$\mathrm{sort}: B\times B\rightarrow M_d\times M_d, \qquad (\ub,\vb)\rightarrow (\ub',\vb'),$$ where   $M_d$ denotes all vectors $\ub$ in $\ZZ_+^n$ with $|\ub|=d$.

 \vspace{2mm}

 The map ``sort" is called a {\em sorting operator}. A pair $(\ub,\vb)$ is called {\em sorted} if $\mathrm{sort}(\ub,\vb)=(\ub,\vb)$ and  $B$ is called {\em sortable} if for all pair $(\ub,\vb)\in B\times B$, one has $\mathrm{sort}(\ub,\vb)\in B\times B$.   We see that if $(\ub,\vb)$ is sorted, then $\ub-\vb$  is a vector with entries $\pm1$ and $0$.

 \vspace{2mm}

 Let $\ub,\vb$ be elements  in $B$ with $|\ub(i)-\vb(i)|\leq 1$ for $i=1,\ldots,n$. As in  \cite[Page 253 ]{HH} one  associates $(\ub,\vb)$ with a sequence  $s(\ub,\vb)$ of signs $+$ and $-$ only depending  on $\ub-\vb$: reading entries of  $\ub-\vb$ from the left to right we put the sign $+$ or the sign $-$ if we reach the entry $+1$ or entry $-1$.  For example, if $\ub-\vb=(0,0,-1,0,1,1,-1,0)$, then $s(\ub, \vb)= -,+,+,-$. Note that the sequence $s(\ub,\vb)$ always contains  as many  $+$ as $-$ signs, since $\sum_{i=1}^n\ub(i)= \sum_{i=1}^n\vb(i)$ and  $|\ub(i)-\vb(i)|\leq 1$ for $i=1,\ldots,n$. By \cite[Lemma 5.1]{HH}, a pair $(\ub,\vb)$ is sorted if and only if  $s(\mathbf{u},\mathbf{v})$ is a sequence of alternating signs: either  $+,-,+,-,\cdots$ or $-,+,-,+,\cdots$.

\vspace{1mm}

We observe the following  fact: if $\ub(i)-\vb(i)=1$ and $\ub(j)-\vb(j)=-1$, then
$s(\ub-{\bm \epsilon}_i+{\bm \epsilon}_j,\vb+{\bm \epsilon}_i-{\bm \epsilon}_j)$ is obtained from $s(\ub,\vb)$ by exchanging corresponding signs. For instance for $\ub$ and $\vb$ with  $\ub-\vb$  as before, we get $s(\ub+{\bm \epsilon}_3-{\bm \epsilon}_5, \vb-{\bm \epsilon}_3+{\bm \epsilon}_5)=+,-,+-$ which is   obtained from $s(\ub,\vb)$ by exchanging the first and second signs in the sequence.

\vspace{1mm}
Finally, if $\ub,\vb\in B$ and $\mathrm{sort}(\ub,\vb)=(\ub',\vb')$, then $x_{\ub}x_{\vb}-x_{\ub'}x_{\vb'}$ is called a {\em sorting relation}. Note that if $(\ub',\vb')\notin B\times B$, then the sorting relation $x_{\ub}x_{\vb}-x_{\ub'}x_{\vb'}$ does not belong to $I_B$. In view of  \cite[Lemma 5.2]{HH}, we see that if $B$ is sortable then $I_B$ has a Gr\"obner base consisting of the sorting relations: $x_{\ub}x_{\vb}-x_{\ub'}x_{\vb'}$, where $(\ub,\vb)$ ranges over $B\times B$.

\begin{Theorem} Assume that $B$ satisfies the one-sided (left-sided or right-sided) strong symmetric exchange property. Then:

 $\mathrm{(a)}$ $B$ is sortable and sorting relations form a Gr\"obner base of $I_B$;

$ \mathrm{(b)}$  $I_B$ is generated by symmetric exchange relations.
\end{Theorem}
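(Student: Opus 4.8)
The plan is to prove (a) first, since (b) follows from it: once we know $B$ is sortable, \cite[Lemma 5.2]{HH} gives that the sorting relations form a Gröbner basis of $I_B$, and by the remarks preceding the theorem every sorting relation is in fact a symmetric exchange relation (indeed, a sorted pair differs by $\pm1$-entries, and one passes between any pair and its sort by a sequence of sign-exchanges, each of which is a symmetric exchange), so the symmetric exchange relations generate $I_B$. The one-sided hypothesis is invariant under reversing the order of the ground set $[n]$, so it suffices to treat the left-sided case.

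The core is thus: assuming $B$ has the left-sided strong symmetric exchange property, show $B$ is sortable. Take $\ub,\vb\in B$ and let $(\ub',\vb')=\mathrm{sort}(\ub,\vb)$; we must show $\ub',\vb'\in B$. I would argue by induction on a measure of ``unsortedness,'' e.g. $D(\ub,\vb)=\sum_{i=1}^n |\ub(i)-\vb(i)|$ together with (as a secondary measure) how far $s(\ub,\vb)$ is from alternating. If $(\ub,\vb)$ is already sorted there is nothing to do. Otherwise, first reduce to the case $|\ub(i)-\vb(i)|\le 1$ for all $i$: if some coordinate has $\ub(i)-\vb(i)\ge 2$ (or $\le -2$), pick the smallest index $i$ where $|\ub(i)-\vb(i)|$ is maximal; since $\sum(\ub(j)-\vb(j))=0$, the partial sums must ``turn around,'' so there is an index $j<i$ (here is where left-sidedness is used, with the roles arranged so that the big coordinate plays the part of ``$\ub(i)>\vb(i)$'' and the partial-sum inequality $\ub(1)+\cdots+\ub(i-1) < \vb(1)+\cdots+\vb(i-1)$ holds) with $\ub(j)<\vb(j)$ and $\ub-{\bm \epsilon}_i+{\bm \epsilon}_j, \vb+{\bm \epsilon}_i-{\bm \epsilon}_j \in B$; this strictly decreases $D$ while keeping the same sort, so induction applies. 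Once $|\ub(i)-\vb(i)|\le 1$ everywhere, $s(\ub,\vb)$ is defined; if it is not alternating it contains two consecutive equal signs, say $+,+$ (the case $-,-$ is symmetric), coming from coordinates $i<j$ with $\ub(i)-\vb(i)=\ub(j)-\vb(j)=1$ and no $-1$ coordinate strictly between them. Then between $i$ and $j$ the partial sums of $\ub-\vb$ are strictly positive, but they must return to $0$ eventually past $j$, forcing a coordinate $\ell>j$ with $\ub(\ell)-\vb(\ell)=-1$. Now swap the roles: apply left-sidedness to the pair $(\vb,\ub)$ at the index $\ell$ (where $\vb(\ell)>\ub(\ell)$ and the relevant left partial sum inequality holds because of the surplus created by $i$ and $j$), producing $j'\le \ell-1$ with $\vb(j')<\ub(j')$, i.e. $\ub(j')-\vb(j')=1$, and with the swapped pair again in $B$; choosing things carefully this exchange replaces the sign pattern locally by one that is ``closer to alternating'' in the sense that the number of adjacent equal-sign pairs drops, without changing $\mathrm{sort}(\ub,\vb)$. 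Iterating drives $s(\ub,\vb)$ to the alternating pattern, i.e. to the sorted pair, which therefore lies in $B$ — and since sign-exchanges preserve the sort, the sorted pair $(\ub',\vb')$ we reached is exactly $\mathrm{sort}$ of the original, so $\ub',\vb'\in B$.

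I expect the main obstacle to be the bookkeeping in the second stage: one must choose the index at which to apply the one-sided property so that (i) the hypothesis of Definition~\ref{Def1} is genuinely met (the partial-sum inequality has the correct strict direction), and (ii) the resulting exchange provably decreases the chosen complexity measure rather than merely moving the defect around. The delicate point is that left-sidedness only lets us ``fix'' a coordinate by pairing it with an earlier coordinate, so one has to be sure there is always a defect that can be repaired looking leftward; this is exactly what the partial-sum constraint $\sum_j(\ub(j)-\vb(j))=0$ guarantees, but translating that into a clean decreasing induction — in particular ruling out cycling — is the part that needs care. A convenient way to organize it is to process the sign sequence from the left: at the first place it fails to alternate, the surplus sign must be cancelled by an opposite sign further right, and applying the one-sided property at that right-hand position pulls the correction leftward into the offending spot, strictly increasing the length of the initial alternating prefix of $s(\ub,\vb)$; since this prefix length is bounded by $2d$, the process terminates at the sorted pair.
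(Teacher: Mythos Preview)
Your overall architecture matches the paper's: reduce to $|\ub(i)-\vb(i)|\le 1$, then repair the sign sequence $s(\ub,\vb)$ by successive symmetric exchanges until it is alternating; this simultaneously shows sortability and that every sorting relation lies in the ideal $E_B$ generated by symmetric exchange relations, whence (b). However, the second stage as you have written it has a genuine gap stemming from a direction mismatch.

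With the \emph{left}-sided property, the exchange partner is always to the \emph{left} of the index where you invoke it, and you have no control over which left index you get. In your scheme you locate the first defect (positions $p-1,p$ with $s_{p-1}=s_p=+$, at coordinates $i<j$), then move to the first $\ell>j$ with $\ub(\ell)-\vb(\ell)=-1$ and apply left-sidedness to $(\vb,\ub)$ at $\ell$. The hypothesis is indeed met, but the conclusion only produces \emph{some} $j'<\ell$ with $\ub(j')-\vb(j')=+1$; nothing prevents $j'$ from landing inside the already alternating prefix (e.g.\ $j'=i$, or any earlier $+$). Swapping the signs at $j'$ and $\ell$ then creates a new defect at a position $\le p-1$, so the ``length of the initial alternating prefix'' can strictly decrease, and the induction stalls.

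The paper avoids this by taking (without loss of generality) the \emph{right}-sided property and applying it directly at the offending position: if $i=c(\ub,\vb)$ is the first repeat and $i_1$ the corresponding coordinate, the alternating prefix forces the partial sum $\sum_{m\le i_1}(\ub(m)-\vb(m))$ to have the correct sign, so the right-sided hypothesis holds at $i_1$; the exchange flips $s_i$ together with some sign at a position $>i$, hence $c$ strictly increases. The clean fix for your write-up is either to switch to the right-sided case and argue exactly this way, or to keep left-sidedness but process from the \emph{right} (work with the last repeat and apply the property at that coordinate), which is the mirror image.

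A smaller point: your first stage (reducing to $|\ub(i)-\vb(i)|\le 1$) does not need the one-sided property at all, and your use of it fails when the extremal coordinate is at position $1$ (e.g.\ $\ub-\vb=(2,-1,-1)$: there is no $j<1$). The paper simply invokes \cite[Lemma~5.4]{HH}, which uses only the ordinary symmetric exchange property that every discrete polymatroid has, to pass to a pair $(\ub_1,\vb_1)\in B\times B$ with $x_\ub x_\vb-x_{\ub_1}x_{\vb_1}\in E_B$ and $|\ub_1(i)-\vb_1(i)|\le 1$ for all $i$.
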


  \begin{proof} (a)  Without loss of generality we assume that $B$ satisfies the right-sided  strong symmetric exchange property. We denote by $E_B$ the ideal (contained in $I_B$) which is generated by the symmetric exchange relations.

\vspace{2mm}
  Let $(\mathbf{u},\mathbf{v})\in B\times B$. By \cite[Lemma 5.4]{HH} there exists  $(\mathbf{u}_1,\mathbf{v}_1)\in B\times B$ such that $x_\mathbf{u}x_\mathbf{v}-x_{\mathbf{u}_1}x_{\mathbf{v}_1}\in E_B$ and $|\mathbf{u}_1(i)-\mathbf{v}_1(i)|\leq 1$ for  $i=1,\ldots,n$.  We claim that there exists  $(\mathbf{u}_2,\mathbf{v}_2)\in B\times B$ such that $x_{\mathbf{u}_1}x_{\mathbf{v}_1}-x_{\mathbf{u}_2}x_{\mathbf{v}_2}\in E_B$ and $(\mathbf{u}_2,\mathbf{v}_2)$ is sorted.

\vspace{1mm}
The sign at position $i$ of the sequence $s(\mathbf{u}_1,\mathbf{v}_1)$ will be denoted by $s_i(\mathbf{u_1},\mathbf{v_1})$. Without loss of generality we may suppose that  $s_1(\mathbf{u}_1,\mathbf{v}_1)=+$. Let $i\geq 2$ be the smallest integer with the property that $s_{i-1}(\mathbf{u}_1,\mathbf{v}_1)=s_i(\mathbf{u}_1,\mathbf{v}_1)$. For convenience, we denote by $c(\mathbf{u}_1,\mathbf{v}_1)$ the number $i$   and set $c(\mathbf{u}_1,\mathbf{v}_1)=\infty$ if no such $i$ exists. Then,  $(\ub_1,\vb_1)$ is sorted if and only if $c(\mathbf{u}_1,\mathbf{v}_1)=\infty$, and  $c(\mathbf{u}_1,\mathbf{v}_1)<n$ if $c(\mathbf{u}_1,\mathbf{v}_1)<\infty$.
\vspace{2mm}

We consider the following cases.
\vspace{2mm}

 If $c(\mathbf{u}_1,\mathbf{v}_1)=\infty$, then $(\ub_1,\vb_1)$ is sorted and there is nothing to prove.

\vspace{1mm}
 Assume that $c(\mathbf{u}_1,\mathbf{v}_1)<\infty$.  Then we let $i=c(\mathbf{u}_1,\mathbf{v}_1)$. Suppose first that $s_i(\mathbf{u}_1,\mathbf{v}_1)=+$. By definition,  $s_i(\mathbf{u}_1,\mathbf{v}_1)$ corresponds to the sign  of  the  $i_1$-th entry of $\mathbf{u}_1-\mathbf{v}_1$ for some $i_1\geq i$. Note that our assumptions imply that  $\mathbf{u}_1(i_1)-\mathbf{v}_1(i_1)=1$ and  $\mathbf{u}_1(i_1+1)+\cdots+\mathbf{u}_1(n)<\mathbf{v}_1(i_1+1)+\cdots+\mathbf{v}_1(n)$. Therefore, since  $B$ satisfies the right-sided strong symmetric exchange property,   there exist $j_1>i_1$ such that $\mathbf{u}_1(j_1)-\mathbf{v}_1(j_1)=-1$ and that both $\mathbf{u}_1-{\bm \epsilon}_{i_1}+{\bm \epsilon}_{j_1}$ and $\mathbf{v}_1+{\bm \epsilon}_{i_1}-{\bm \epsilon}_{j_1}$ belong to  $B$. Since $s(\mathbf{u}_1-{\bm \epsilon}_{i_1}+{\bm \epsilon}_{j_1},\mathbf{v}_1+{\bm \epsilon}_{i_1}-{\bm \epsilon}_{j_1})$ is obtained from $s(\mathbf{u}_1,\mathbf{v}_1)$ by exchanging $s_i(\mathbf{u}_1,\mathbf{v}_1)$ and  $s_j(\mathbf{u}_1,\mathbf{v}_1)$ for suitable $j>i$, it follows that  $c(\mathbf{u}_1-{\bm \epsilon}_{i_1}+{\bm \epsilon}_{j_1},\mathbf{v}_1+{\bm \epsilon}_{i_1}-{\bm \epsilon}_{j_1})\geq i+1$.

\vspace{1mm}
  Suppose next that $s_i(\mathbf{u}_1-\mathbf{v}_1)=-$. Similarly as the case above, let $i_1$ be the entry of $\mathbf{u}_1-\mathbf{v}_1$ corresponding to $s_i(\mathbf{u}_1,\mathbf{v}_1)$. Then, since  $\mathbf{u}_1(i_1)-\mathbf{v}_1(i_1)=-1$ and  $\mathbf{u}_1(i_1+1)+\cdots+\mathbf{u}_1(n)>\mathbf{v}_1(i_1+1)+\cdots+\mathbf{v}_1(n)$, the right-sided symmetric exchange property implies that there exists  $j_1>i_1$ such that $\mathbf{u}_1(j_1)-\mathbf{v}_1(j_1)=1$ and that $\mathbf{u}_1+{\bm \epsilon}_{i_1}-{\bm \epsilon}_{j_1}$ and $\mathbf{v}_1-{\bm \epsilon}_{i_1}+{\bm \epsilon}_{j_1}$ belong to  $B$. Again, we have $c(\mathbf{u}_1+{\bm \epsilon}_{i_1}-{\bm \epsilon}_{j_1}, \mathbf{v}_1-{\bm \epsilon}_{i_1}+{\bm \epsilon}_{j_1})\geq i+1$.

\vspace{2mm}
Thus in both cases we obtain a pair $(\mathbf{u}',\mathbf{v}')\in B\times B$ such that $x_{\mathbf{u}_1}x_{\mathbf{v}_1}-x_{\mathbf{u}'}x_{\mathbf{v}'}\in E_B$ and $c(\mathbf{u}',\mathbf{v}')>c(\mathbf{u}_1,\mathbf{v}_1)$. Hence the claim follows by induction on $n-c(\ub_1,\vb_1)$.

\vspace{1mm}
Now let $(\mathbf{u}_2,\mathbf{v}_2)$ be as in the claim. Since $x_{\mathbf{u}_1}x_{\mathbf{v}_1}-x_{\mathbf{u}_2}x_{\mathbf{v}_2}\in E_B$, we have $\mathbf{u}_1+\mathbf{v}_1=\mathbf{u}_2+\mathbf{v}_2$, and so $\mathrm{sort}(\mathbf{u}_1,\mathbf{v}_1)=\mathrm{sort}(\mathbf{u}_2,\mathbf{v}_2)$. But $(\mathbf{u}_2,\mathbf{v}_2)$ is sorted, hence $\mathrm{sort}(\mathbf{u},\mathbf{v})=(\mathbf{u}_2,\mathbf{v}_2)\in B\times B$.  This implies $B$ is sortable and so $I_B$ has a Gr\"obner base consisting of sorting relations by \cite[Lemma 5.2]{HH}.
\vspace{2mm}

(b)  From the proofs of (a), we see that for any $(\mathbf{u},\mathbf{v})\in B\times B$, we have $x_\mathbf{u}x_\mathbf{v}-x_{\mathbf{u}'}x_{\mathbf{v}'}\in E_B$, where $(\mathbf{u}',\mathbf{v}')=\mathrm{sort}(\mathbf{u},\mathbf{v})$.  This  implies that all sorting relations belong to  $E_B$. Hence  $I_B=E_B$, as required.
\end{proof}

\section{Pruned Lattice path Polymatroids}

In this section we will give an alternative definition of a pruned lattice path polymatroid and show that  this class of discrete polymatroids satisfies the two-sided strong symmetric exchange property.
 In addition, we will present some  basic properties of this class of discrete polymatroids.

\begin{figure}[ht!]
\begin{center}
\psset{unit=1cm}
\begin{pspicture}(2.75,0.0)(8,5)

\psline[linewidth=0.6pt,linecolor=black](3.9, 5.25)(9.1,5.25)

\psline[linewidth=0.6pt,linecolor=gray](0, 4.50)(9.1,4.50)

\psline[linewidth=1pt,linecolor=black](0, 4.50)(3.9,4.50)

\psline[linewidth=1pt,linecolor=black](3.9,4.50)(3.9,5.25)

\psline[linewidth=1pt,linecolor=black](2.6, 2.25)(2.6,3.00)

\psline[linewidth=1pt,linecolor=black](2.6,3.00)(6.5,3.00)
\psline[linewidth=1pt,linecolor=black](6.5,3.00)(6.5,3.75)

\psline[linewidth=1pt,linecolor=black](6.5,3.75)(9.1, 3.75)
\psline[linewidth=1pt,linecolor=black](9.1, 3.75)(9.1,5.25)

\psline[linewidth=0.6pt,linecolor=gray](0, 3.75)(9.1,3.75)

\psline[linewidth=0.6pt,linecolor=gray](0, 3.00)(6.5,3.00)

\psline[linewidth=1.5 pt,linecolor=black](0, 2.25)(0,3.00)

\psline[linewidth=1.5pt,linecolor=black](0, 3.00)(1.3,3.00)
\psline[linewidth=1.5pt,linecolor=black](0, 3.00)(1.3,3.00)
\psline[linewidth=1.5pt,linecolor=black](1.3,3.00)(1.3,3.75)

\psline[linewidth=1.5pt,linecolor=black](1.3,3.75)(5.2,3.75)

\psline[linewidth=1.5pt,linecolor=black](5.2,3.75)(5.2, 5.25)

\psline[linewidth=1.5pt,linecolor=black](5.2, 5.25)(9.1, 5.25)

\psline[linewidth=0.6pt,linecolor=black](0, 2.25)(2.6,2.25)

\psline[linewidth=0.8pt,linecolor=black](0, 2.25)(0, 4.50)

\psline[linewidth=0.6pt,linecolor=gray](1.3, 2.25)(1.3, 4.50)

\psline[linewidth=0.6pt,linecolor=gray](2.6, 2.25)(2.6, 4.50)

\psline[linewidth=0.6pt,linecolor=gray](3.9, 3.00)(3.9, 5.25)

\psline[linewidth=0.6pt,linecolor=gray](5.2, 3.00)(5.2, 5.25)

\psline[linewidth=0.6pt,linecolor=gray](6.5, 3.00)(6.5, 5.25)
\psline[linewidth=0.6pt,linecolor=gray](7.8, 3.75)(7.8, 5.25)
\psline[linewidth=0.8pt,linecolor=gray](9.1, 3.75)(9.1, 5.25)

\rput(5,1){ $N(\alpha)=

\{3,7,10,11\}$ and $N(\beta)=\{1,2,3,7\}$ }

\rput(5,1.6){$(n,r)=(8,5)$}

\rput(5,0.4){$E_{\sigma}=(2,3,3,3,5,5,5)$ and $m(\sigma)=x_1x_2x_5^2$}

\rput(3.6, 4.9){$\beta$}
\rput(0, 5.25){$\mathcal{M}(\alpha,\beta)$}

\rput(4.5,4.0){$\sigma$}

\rput(3,2.5){$\alpha$}
\end{pspicture}
\end{center}
\caption{ \ }

\label{Fig1}
\end{figure}

First of all, we recall from \cite{Sc} some definitions and some facts about lattice path polymatroids. Fix two integers $n, r\geq 1$. A {\it  lattice path} is a sequence of unit-length steps in the plane, each either due
north or east, beginning at the point $(1,1)$ and ending at the point $(n, r)$. Note that a lattice path in the original definition of \cite{Sc} begins at the original point $(0,0)$. We make such an adaption because we want to  consider monomials in the polynomial ring $K[x_1,\ldots,x_n]$ (not $K[x_0,\ldots,x_n]$). In our setting, it needs $n+r-2$ steps from the beginning to the end.  Fix a lattice path $\sigma$. Define the subset $N(\sigma) \subseteq [n+r-2]$  by the following rule:
\begin{center}$i\in N(\sigma)\quad \Longleftrightarrow$\quad the $i$th step of $\sigma$ is north.

 \end{center}

 Similarly, $E(\sigma)$ is the subset of $[n+r-2]$ defined by $i\in E(\sigma)$ $\Leftrightarrow$ the $i$th step of $\sigma$ is east. Hence $$E(\sigma)=[n+r-2]\setminus N(\sigma).$$  Also  the vectors $N_{\sigma}$ and $E_{\sigma}$ are defined  as follows: the $i$th entry of $N_{\sigma}$ is  the vertical coordinate of the $i$th north step of $\sigma$, and   Similarly, the $i$th entry of $E_{\sigma}$ is the horizontal coordinate  of the $i$th east step of $\sigma$. Thus,  if $E(\sigma)=\{a_1,\ldots,a_{n-1}\}$ with $a_1<a_2<\cdots<a_{n-1}$, then $$E_{\sigma}=(a_1,a_2-1, \ldots,a_{n-1}-(n-2)).$$ Similarly, if $N(\sigma)=\{b_1,\ldots,b_{r-1}\}$ with $b_1<b_2<\cdots<b_{r-1}$, then $N_{\sigma}=(b_1,b_2-1,\ldots,b_{r-1}-(r-2))$. Let $m(\sigma)$ be the monomial $x_1^{u_1}\cdots x_n^{u_n}$, where $u_i$ is the degree of $x_i$ is the number of north steps along the vertical line $x=i$. Hence,  if $m(\sigma)=x_1^{u_1}\cdots x_n^{u_n}$ and $E_{\sigma}=(a_1,\ldots,a_n)$, then
 \begin{center} $u_1+\cdots+u_i=a_{i}-1$ for $i=1,\ldots,n-1$ and $u_1+\cdots+u_n=r-1$.
 \end{center}

Let $\sigma$ and $\tau$ be lattice paths. We say that $\sigma$ is above $\tau$ if $E_{\sigma}\geq E_{\tau}$, that is, the $i$th entry of $E_{\sigma}$ is greater than or equal to the  $i$th one of $E_{\tau}$ for $1\leq i\leq n-1$. In this case, we write $\sigma \succeq \tau$ (or $\tau \preceq \sigma$).
Now fix two lattice paths $\alpha$ and $\beta$ with $\alpha\preceq \beta$ and set  $$\mathcal{M}(\alpha,\beta):=\{m(\sigma)\:\;\alpha\preceq \sigma\preceq \beta)\}.$$ Then $(\mathcal{M}(\alpha,\beta))$, the  ideal of $ K[x_1,\ldots,x_n]$ generated by monomials in $\mathcal{M}(\alpha,\beta)$, is a polymatroidal ideal, since the  exponent set $$\log\mathcal{M}(\alpha,\beta):= \{\ub\:\; \xb^{\ub}\in \mathcal{M}(\alpha,\beta)\}$$  is the set of bases of a discrete polymatroid, which is called a {\em lattice path polymatroid}. Assume that $E_{\alpha}=(\alpha_1,\ldots,\alpha_{n-1})$ and $E_{\beta}=(\beta_1,\ldots,\beta_{n-1})$.  Then, for a vector $\ub\in \ZZ_+^n$, $\xb^{\ub}\in \mathcal{M}(\alpha,\beta)$ if and only if $$\alpha_i-1\leq u_1+\cdots+u_i\leq \beta_i-1 \mbox{\ for \ }i=1,\cdots,n-1 \mbox{\ and\ } u_1+\cdots+u_n=r-1.$$

 Recall that a discrete polymatroid is {\em transversal} if its polymatroidal ideal is the product of some monomial prime ideals. It was observed in \cite{Sc}  that every lattice path polymatroid is transversal.  More exactly, if we denote by  $I$  the polymatroidal ideal of a lattice path polymatroid whose set of bases is $\log \mathcal{M}(\alpha,\beta)$ and assume that $N_{\beta}=(s_1,\ldots,s_d)$ and $N_{\alpha}=(t_1,\ldots,t_d)$, then $I=P_{[s_1,t_1]}\cdots P_{[s_d,t_d]}.$  Its converse statement is also true by observing carefully: namely, if $I=P_{[s_1,t_1]}\cdots P_{[s_d,t_d]}$ with $s_1\leq \ldots \leq s_d$, $t_1\leq \ldots \leq t_d$ and $s_i\leq t_i$ for $i=1,\ldots,d$, then $I$ is the polymatroidal ideal of some lattice path polymatroid. In conclusion we have

 \begin{Lemma}

  \label{trans1} Let $I$ be a monomial ideal generated in degree $d$. Then $I$ is a lattice path  polymatroidal ideal  if and only if $I=P_{[s_1,t_1]}\cdots P_{[s_d,t_d]}$ for some $s_i,t_i$ satisfying $s_1\leq \ldots \leq s_d$, $t_1\leq \ldots \leq t_d$ and $s_i\leq t_i$ for $i=1,\ldots,d$.

 \end{Lemma}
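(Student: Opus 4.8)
The plan is to deduce the lemma directly from the two facts recalled from \cite{Sc} immediately before its statement: first, that the polymatroidal ideal of the lattice path polymatroid with base set $\log\mathcal{M}(\alpha,\beta)$ equals $P_{[s_1,t_1]}\cdots P_{[s_d,t_d]}$ with $(s_1,\ldots,s_d)=N_\beta$ and $(t_1,\ldots,t_d)=N_\alpha$; and second, the converse realization statement. Thus the only genuinely new point is to check that in the ``only if'' direction the parameters $s_i,t_i$ automatically obey the three monotonicity conditions, and in the ``if'' direction to produce the pair of lattice paths explicitly.

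For the implication ``$\Rightarrow$'', suppose $I$ is the polymatroidal ideal of a lattice path polymatroid whose base set is $\log\mathcal{M}(\alpha,\beta)$ for lattice paths $\alpha\preceq\beta$ from $(1,1)$ to $(n,r)$; since every generator of $I$ has degree $d$ we get $r-1=d$. Write $I=P_{[s_1,t_1]}\cdots P_{[s_d,t_d]}$ with $(s_i)=N_\beta$, $(t_i)=N_\alpha$, as recalled. From the defining formula $N_\sigma=(b_1,b_2-1,\ldots,b_{r-1}-(r-2))$, where $b_1<b_2<\cdots<b_{r-1}$ are the indices of the north steps of $\sigma$, consecutive entries of $N_\sigma$ differ by $b_{i+1}-b_i-1\ge 0$, so $N_\sigma$ is nondecreasing; applying this with $\sigma=\beta$ and $\sigma=\alpha$ gives $s_1\le\cdots\le s_d$ and $t_1\le\cdots\le t_d$. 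Finally $\alpha\preceq\beta$ means $E_\alpha\le E_\beta$, and since for lattice paths with the same endpoints lying higher is equivalent to having the componentwise smaller $N$-vector, this yields $N_\beta\le N_\alpha$, i.e. $s_i\le t_i$ for $i=1,\ldots,d$. That $s_i,t_i\in[1,n]$ is automatic, since $P_{[s_i,t_i]}$ is a prime ideal of $S$.

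For the implication ``$\Leftarrow$'', given $I=P_{[s_1,t_1]}\cdots P_{[s_d,t_d]}$ with $s_1\le\cdots\le s_d$, $t_1\le\cdots\le t_d$ and $s_i\le t_i$, I would set $r=d+1$ and let $\beta$ (resp. $\alpha$) be the lattice path from $(1,1)$ to $(n,r)$ whose $i$th north step is taken at horizontal coordinate $s_i$ (resp. $t_i$); the monotonicity of $(s_i)$ and $(t_i)$ together with $1\le s_i\le t_i\le n$ guarantees these prescriptions describe genuine lattice paths, and by construction $N_\beta=(s_1,\ldots,s_d)$, $N_\alpha=(t_1,\ldots,t_d)$. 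Since $s_i\le t_i$ the path $\beta$ lies above $\alpha$, i.e. $\alpha\preceq\beta$, so $\log\mathcal{M}(\alpha,\beta)$ is the base set of a lattice path polymatroid, and by the recalled description its polymatroidal ideal is precisely $P_{[s_1,t_1]}\cdots P_{[s_d,t_d]}=I$.

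The step I expect to require the most care is making this correspondence precise in both directions at once: pinning down the dictionary between the partial order $\preceq$ on lattice paths and the componentwise orders on $E_\sigma$ and $N_\sigma$, and verifying that the paths $\alpha,\beta$ built in the ``$\Leftarrow$'' direction actually reproduce the prescribed product of primes and not merely paths with the right $N$-vectors. Both points are essentially the content of the ``observing carefully'' remark attributed to \cite{Sc}, so the proof ultimately amounts to spelling that remark out.
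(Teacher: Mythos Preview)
Your proposal is correct and follows exactly the route the paper intends: the paper does not give a separate proof of the lemma but simply states it as a summary of the discussion immediately preceding it, citing \cite{Sc} for the forward direction and remarking that the converse holds ``by observing carefully.'' Your argument is precisely that careful observation spelled out --- checking that $N_\sigma$ is nondecreasing, that $\alpha\preceq\beta$ translates to $N_\beta\le N_\alpha$, and that conversely monotone sequences $(s_i),(t_i)$ with $s_i\le t_i$ determine lattice paths $\alpha\preceq\beta$ with the required $N$-vectors.
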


 \begin{Example}{\em Let $\Pb$ be the lattice path polymatroid displayed in Figure~\ref{Fig1}. Then $E_{\alpha}=(1,1,2,2,2,3,3)$ and $E_{\beta}=(4,4,4,5,5,5,5).$  Denote by $\alpha_i$ the $i$-th entry of $E_{\alpha}$, and by $\beta_i$ the  $i$-th entry of $E_{\beta}$ for $i=1,\ldots,7$.   Then $\ub\in \ZZ_+^8$ is a base of $\Pb$ if and only if
 \begin{center}
   $\alpha_i-1\leq u_1+\cdots+u_i\leq \beta_i-1$ for $i=1,\ldots,7$ and $u_1+\cdots+u_8=4$.
\end{center}

Let  $I$ be the polymatroidal ideal of $\Pb$.
Since $N_{\beta}=(1,1,1,4)$ and $N_{\alpha}=(3,6,8,8)$, we have
$$I=P_{[1,3]}P_{[1,6]}P_{[1,8]}P_{[4,8]}.$$}

\end{Example}
Let us recall the concept of a {\em pruned discrete polymatroid} given in \cite{Sc}.
 Suppose that $B$ is the set of bases of a discrete polymatroid $\Pb$ on the ground set $[n]$ and that $\bb=(b_1,\ldots,b_n)$ is a vector of $\ZZ_+^n$. Then
$$B_{\bb}=\{\ub\in B\:\; u_i\leq b_i \mbox{\ for\ } 1\leq i\leq n\}$$
is again the set of bases of a discrete polymatroid, which is denoted by $\Pb_{\bb}$. We call $\Pb_{\bb}$ a {\em pruned  discrete polymatroid} of $\Pb$.

\medskip

 Thus,  a {\em pruned lattice path polymatroid}, namely, a pruned  discrete polymatroid of a lattice path polymatroid, can also be defined as follows.
\begin{Definition}
\label{defpruned}
\em Let $n,d$ be positive integers, and given vectors  $\mathbf{a}, \mathbf{b}, {\bm \alpha},{\bm \beta}$ in $\ZZ_+^n$ such that $\ab\leq \bb$, ${\bm \alpha}\leq {\bm \beta}$ and $\alpha_1\leq \cdots\leq \alpha_n=d$,
 $\beta_1\leq \cdots\leq \beta_n=d$.  A discrete polymatroid $\Pb$ on the ground set $[n]$ is called a {\em pruned path lattice polymatroid} or simply {\it a PLP-polymatroid }  (of type $(\mathbf{a},\mathbf{b}|{\bm \alpha},{\bm \beta})$), if the set $B$ of its bases consists of vectors $u\in \ZZ_{+}^n$ such that \begin{eqnarray}
\label{eq1}
a_i \leq u_i\leq b_i  \quad \text{for $i=1,\ldots,n$},
\end{eqnarray}
and
\begin{eqnarray}
\label{eq2}
  \alpha_{i} \leq u_1+u_{2}+\cdots +u_i\leq \beta_{i} \quad \text{for $i=1,\ldots,n$}.
\end{eqnarray}
\end{Definition}

  We will explain this definition a bit. If  $\alpha_i=0$ and  $\beta_i=d$ for all $i\leq n-1$, then the first $(n-1)$ inequalities in (2) can be dropped and so    $\Pb$ is a discrete polymatroid of Veronese type.
If all $a_i=0$ and all $b_i\geq d$, then all inequalities in (1) can be dropped and $\Pb$ is a lattice path polymatroid; If this is the case, we say that $\Pb$ is a lattice path polymatroid of type $({\bm \alpha},{\bm \beta})$ or a LP-polymatroid of type $({\bm \alpha},{\bm \beta})$ for short.

\medskip
We also say that a monomial ideal $I$ is a  PLP-polymatroidal ideal (of type $(\mathbf{a},\mathbf{b}|{\bm \alpha},{\bm \beta})$) if it is the polymatroidal ideal of a PLP-polymatroid (of type $(\mathbf{a},\mathbf{b}|{\bm \alpha},{\bm \beta})$) and that a monomial ideal $I$ is a  LP-polymatroidal ideal (of type $({\bm \alpha},{\bm \beta})$) if it is the polymatroidal ideal of a LP-polymatroid (of type $({\bm \alpha},{\bm \beta})$).

\medskip
As an example,  consider the  graphic matroid $\MM$  of the graph $G$ as shown in Figure~\ref{Fig2}.
\begin{figure}[hbt]
\begin{center}
\psset{unit=1cm}
\begin{pspicture}(2.75,1.5)(8,5)

\rput(6,2.25){$\bullet$}
\rput(7.5,2.25){$\bullet$}
\rput(6,3.75){$\bullet$}
\rput(7.5,3.75){$\bullet$}

\psline[linewidth=0.6pt,linecolor=black](6,2.25)(7.5,2.25)
\psline[linewidth=0.6pt,linecolor=gray](7.5,2.25)(7.5,3.75)
\psline[linewidth=0.6pt,linecolor=gray](6,2.25)(6,3.75)

\psline[linewidth=0.6pt,linecolor=black](6,3.75)(7.5,3.75)

\rput(4.5,2.25){$\bullet$}
\psline[linewidth=0.6pt,linecolor=gray](4.5,2.25)(4.5,3.75)

\rput(4.5,3.75){$\bullet$}
\psline[linewidth=0.6pt,linecolor=black](4.5,2.25)(6,2.25)
\psline[linewidth=0.6pt,linecolor=black](4.5,3.75)(6,3.75)

\rput(3,2.25){$\bullet$}
\psline[linewidth=0.6pt,linecolor=gray](3,2.25)(3,3.75)

\rput(3,3.75){$\bullet$}
\psline[linewidth=0.6pt,linecolor=black](4.5,2.25)(3,2.25)
\psline[linewidth=0.6pt,linecolor=black](4.5,3.75)(3,3.75)

\rput(2.8,3){$e_1$}
\rput(4.3,3){$e_4$}\rput(6.2,3){$e_7$}\rput(7.8,3){$e_{10}$}

\rput(3.75,3.95){$e_2$}\rput(5.25,3.95){$e_5$}\rput(6.75,3.95){$e_8$}

\rput(3.75,2.05){$e_3$}\rput(5.25,2.05){$e_6$}\rput(6.75,2.05){$e_9$}

\rput(5.25,1.6){$\mathrm{G}$}

\end{pspicture}
\end{center}
\caption{ \ }
\label{Fig2}
\end{figure}

We will show that $\MM$  is a PLP-polymatroid. Let $B$ be the set of bases of $\MM$. Every element in $B$ is identified with a 0-1 vector of dimension 10.
 Since a set of $7$ edges of $G$ forms  a spanning tree of $G$ if and only if it does not contain a cycle, a 0-1 vector  $\ub\in \ZZ_+^{10}$  belongs to $B$ if and only if $\ub$  satisfies the following system of inequalities:
$$\sum_{i=1}^4 u_i\leq 3, \quad  \sum_{i=1}^7 u_i\leq 5,\quad \sum_{i=4}^7u_i\leq 3,\quad
\sum_{i=7}^{10}u_i\leq  3,\quad  \sum_{i=4}^{10}u_i\leq 5,\quad  \sum_{i=1}^{10}u_i=7.$$

 One can check that this set of inequalities  is equivalent to the following set of inequalities
$$ 2\leq \sum_{i=1}^3u_i,\quad \sum_{i=1}^4 u_i\leq 3, \quad 4\leq \sum_{i=1}^6 u_i,\quad
\sum_{i=1}^7u_i\leq 5,\quad \sum_{i=1}^{10}u_i=7.$$  This shows that $\MM$ is indeed a PLP-polymatroid.

\medskip
As another example, consider  the transversal ideal $(x_1,x_2)(x_3,x_4)(x_1,x_3)$. It is a monomial ideal generated by $\xb^{\ub}\in k[x_1,\ldots,x_4]$ with $\ub$ satisfying: $0\leq u_i\leq 2$ for $i=1,3$, $0\leq u_i\leq 1$ for $i=2,4$,   $1\leq u_1+u_2\leq 2$ and $u_1+u_2+u_3+u_4=3$.  Hence it is   a PLP-polymatroidal ideal, but not a LP-polymatroidal ideal.

\begin{Proposition}
\label{pro}Let  $\Pb$ be a PLP-polymatroid with $B$ as its set of bases. Then  $\Pb$  satisfies the  two-sided strong symmetric exchange property.
\end{Proposition}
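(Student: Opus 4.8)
The plan is to prove the left-sided strong symmetric exchange property directly from the inequality description in Definition~\ref{defpruned}; the right-sided property will then follow by symmetry, once one notes that the class of PLP-polymatroids is closed under the coordinate reversal $k\mapsto n+1-k$. Concretely, if $\Pb$ has type $(\ab,\bb\mid{\bm \alpha},{\bm \beta})$, then, using $u_1+\cdots+u_{n-k}=d-(u'_1+\cdots+u'_k)$ for the reversed vector $u'$, the reversal of $\Pb$ is again a PLP-polymatroid, with box bounds $a'_k=a_{n+1-k}$, $b'_k=b_{n+1-k}$ and partial-sum bounds $\alpha'_k=d-\beta_{n-k}$, $\beta'_k=d-\alpha_{n-k}$ for $k<n$ (and $\alpha'_n=\beta'_n=d$); the required monotonicities and the inequalities $\alpha'_k\le\beta'_k$ are inherited. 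So it is enough to treat the left-sided case.

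I would then fix bases $\ub,\vb\in B$ with $u_i>v_i$ and $\ub(1)+\cdots+\ub(i-1)<\vb(1)+\cdots+\vb(i-1)$, write $S_k=u_1+\cdots+u_k$ and $T_k=v_1+\cdots+v_k$, and note that the hypothesis $S_{i-1}<T_{i-1}$ makes the set $\{k\in[1,i-1]\:\;u_k<v_k\}$ nonempty. The only real decision is the choice of $j$: I would take $j$ to be the \emph{largest} element of this set and put $\ub'=\ub-{\bm \epsilon}_i+{\bm \epsilon}_j$, $\vb'=\vb+{\bm \epsilon}_i-{\bm \epsilon}_j$. The reason this choice works is the crux of the proof, namely the claim that $S_k<T_k$ for every $k$ with $j\le k\le i-1$. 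This holds because, by maximality of $j$, one has $u_k\ge v_k$ for $j<k\le i-1$, so the integer sequence $(S_k-T_k)_{k=j}^{i-1}$ is nondecreasing and ends at the negative value $S_{i-1}-T_{i-1}$, hence is negative throughout.

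Granting the claim, checking $\ub',\vb'\in B$ is entirely mechanical. Both have modulus $d$; the box conditions (\ref{eq1}) for the two coordinates that changed follow from $a_j\le u_j<v_j\le b_j$ and $a_i\le v_i<u_i\le b_i$; and the partial-sum conditions (\ref{eq2}) need only be checked on the range $j\le k\le i-1$, where $\ub'$ has partial sum $S_k+1$ and $\vb'$ has partial sum $T_k-1$ --- and both stay inside $[\alpha_k,\beta_k]$ precisely because $\alpha_k\le S_k<T_k\le\beta_k$ by the claim. Outside that range the partial sums are unchanged, so $\ub',\vb'\in B$, which is exactly the assertion of the left-sided property.

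I do not expect a genuine obstacle: the entire content of the argument lies in selecting $j$ as the top sign-change index below $i$ and in the resulting monotonicity of $(S_k-T_k)_k$, which is what forces every instance of (\ref{eq2}) to come out right; the remainder is bookkeeping against (\ref{eq1}) and (\ref{eq2}) together with the reversal remark used to pass from the left-sided to the right-sided property.
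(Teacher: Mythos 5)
Your proposal is correct and follows essentially the same route as the paper: for the left-sided case you pick $j$ as the largest index below $i$ where the coordinates disagree in the relevant direction, observe that the partial-sum differences $S_k-T_k$ are then monotone on $[j,i-1]$ and hence keep one fixed sign there, and read off both inequalities of Definition~\ref{defpruned} directly. The paper dismisses the right-sided case with ``treated similarly''; your explicit coordinate-reversal observation (with $\alpha'_k=d-\beta_{n-k}$, $\beta'_k=d-\alpha_{n-k}$) is a clean way to make that symmetry precise, but it is not a different method.
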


\begin{proof} We only need to prove that $B$ satisfies the left-sided strong symmetric exchange property,  since the other case is treated similarly. Let $\ub,\vb$ be two vectors of $B$ such that $u_i<v_i$, where $1< i\leq n$ and  $u_1+\cdots+u_{i-1}>v_1+\cdots+v_{i-1}$. Then there is $1\leq k\leq i-1$ such that $u_k>v_k$. Let $j$ be the largest number $k$ with this property. We will  show that both $\ub-{\bm \epsilon}_j+{\bm \epsilon}_i$ and $\vb+{\bm \epsilon}_j-{\bm \epsilon}_i$ belong to $B$.  For this, we write $\ub-{\bm \epsilon}_j+{\bm \epsilon}_i=(t_1,t_2,\ldots,t_n)$, that is, $t_k=u_k$ if $k\notin \{j,i\}$ and $t_j=u_j-1$, $t_i=u_i+1$. Since $\Pb$ is a PLP-polymatroid, we may assume that the elements of $B$ satisfy the  inequalities of Definition~\ref{defpruned}.

It is clear that $a_k \leq t_k\leq b_k$  for all $k=1,\ldots,n$.
If $k\leq j-1$ or $k\geq i$, then $t_1+\cdots+t_k=u_1+\cdots+u_k$, and in particular, $\alpha_k \leq t_1+\cdots+t_k\leq \beta_k$. Fix $k\in [j, i-1]$. Then $t_1+\cdots+t_k=u_1+\cdots+u_k-1\leq \beta_k$. Note that $u_{k+1}\leq v_{k+1},\cdots,u_{i-1}\leq v_{i-1}$ by the choice of $j$, it follows that $u_1+\cdots+u_k>v_1+\cdots+v_k$ and so $t_1+\cdots+t_k=u_1+\cdots+u_k-1\geq v_1+\cdots+v_k\geq \alpha_k$. Hence $\ub-{\bm \epsilon}_j+{\bm \epsilon}_i\in B$. Similarly $\vb+{\bm \epsilon}_j-{\bm \epsilon}_i\in B$, as required.
\end{proof}

 \begin{Example} \label{2.4} \em Let $I=(x_1,x_3)(x_2,x_4)$. Then $I$ is a polymatroidal ideal of a discrete polymatroid satisfying the  two-sided strong symmetric exchange property. If $I$ is  a PLP-polymatroidal ideal, then there exist $a_i,b_i,i=1,\ldots,4$ and $\alpha_2,\beta_2$ such that $I$ is generated by monomials $\xb^{\ub}$  with $\ub$ satisfying   $$a_i\leq u_i\leq b_i,i=1,\ldots,4 ,\qquad \alpha_2\leq u_1+u_2\leq \beta_2, \qquad u_1+\cdots+u_4=2.$$  Note that the inequality $\alpha_3\leq u_1+u_2+u_3\leq \beta_3$ does not appear in the conditions above since it is equivalent to the inequality $d-\beta_3\leq u_4\leq d-\alpha_3$. Since $u_1$ can be 1,  it follows that $b_1\geq 1$. Proceeding in this way, we have $a_i=0,b_i\geq 1$ for $i=1,\ldots,4$ and $\alpha_2=0$, $\beta_2\geq 2$. Therefore we have $x_1x_3\in I$, a contradiction. Hence $I$ is not a PLP-polymatroidal ideal.
 \end{Example}

 However,  $I$  is {\em isomorphic} to the  lattice path polymatroidal ideal $(x_1,x_2)(x_3,x_4)$. Till now, we cannot find a discrete polymatroid satisfying the one-sided  symmetric exchange property which  is not isomorphic to a PLP-polymatroid. In \cite{HHV}, it is proved that a discrete polymatroid satisfies the strong symmetric exchange property if and only if it is isomorphic to a discrete polymatroid of Veronese type. In view of these facts, it may be reasonable for us  to have the following conjecture:

 \begin{Conjecture} \label{C1}
Let $\Pb$ be a discrete polymatroid.  Then the following statements are equivalent:

$\mathrm{(1)}$ $\Pb$ is isomorphic to a PLP-polymatroid;

$\mathrm{(2)}$ $\Pb$ satisfies the two-sided strong symmetric exchange property;

$\mathrm{(3)}$ $\Pb$ satisfies the one-sided (left-sided or right-sided) strong symmetric exchange property.

\end{Conjecture}

We are far from proving this conjecture. However,  we could show that  the conjecture is true indeed in two special cases.
\medskip

        To prove these results and for the later use, we  recall some definitions and facts one can find in  \cite{HH} or \cite{HHBook}. A {\em polymatroid} on the ground set $[n]$ is a convex polytope $\MP$ contained in $\RR_+^n$ such that

  (1) if $\ub\in \MP$ and $\vb$ is a vector $\RR_+^n$ with $\vb\leq \ub$, then $\vb\in \MP$,

  (2) if $\ub,\vb\in \MP$ with $|\vb|>|\ub|$, there exists a vector $\wb\in \MP$ such that $\ub<\wb\leq \ub\vee \vb$. Here $\ub\vee \vb=(\max\{u_1,v_1\},\ldots,\max\{u_n,v_n\})$.

  \vspace{2mm}

   The ground set rank function of a polymatroid $\MP$ is the  function: $\rho:2^{[n]}\to \RR_+$ defined by $$\rho(A)=\max\{\ub(A)\:\; \ub\in \MP\}$$ for all $\emptyset\neq A\subseteq [n]$ together with $\rho(\emptyset)=0$.

\vspace{2mm}
     The function $\rho$ is {\em  nondecreasing}, i.e., $\rho(A)\leq \rho(B)$ if $A\subseteq B\subseteq [n]$, and is {\em  submodular}, i.e.,
     $$\rho(A)+\rho(B)\geq \rho(A\cup B)+\rho(A\cap B)$$ for any $A,B\subseteq [n]$.

\vspace{2mm}
     Conversely, if  given a nondecreasing and submodular function $\rho:2^{[n]}\rightarrow \RR_+$, then the set
     $$\{\ub\in \RR_+^n\:\; \ub(A)\leq \rho(A) \mbox{\quad for all\quad } A\in 2^{[n]}\}$$ is a  polymatroid.

\vspace{2mm}

     An  {\em integral  polymatroid}  is a polymatroid for which every vertex is a lattice point, that is, a vector in $\ZZ_+^n$. A polymatroid is integral if and only if its ground set rank function is integer valued. The relation between an integral polymatroid and a discrete polymatroid are as follows:  If $\MP$ is an integral polymatroid then $\MP\cap \ZZ_+^n$ is a discrete polymatroid;  Conversely if $\Pb$ is a discrete polymatroid on the ground set $[n]$,  then $\MP=\conv(\Pb)$, the convex hull of $\Pb$ in $\RR_+^n$, is an integral polymatroid with $\Pb=\MP\cap \ZZ_+^n$. Thus the ground set rank function of an integral polymatroid $\MP$ is determined by its values on its corresponding discrete polymatroid $\Pb=\MP\cap \ZZ_+^n$, namely,  if $\rho$ is the ground set rank function of $\MP$, then $\rho(A)=\max\{\ub(A)\:\; \ub\in \Pb\},$  see \cite[Theorem 3.4]{HH} and its proof.   Furthermore we have the following representation  of $\Pb$ in this case:
     \begin{eqnarray}
     \label{rho}
     \Pb=\{\ub\in \ZZ_+^n\:\;\ub(A)\leq \rho(A) \mbox{\quad for all\quad} A\in 2^{[n]} \}.
\end{eqnarray}

   \medskip

    Let $\rho$ be the ground set rank function of a discrete polymatroid $\Pb$. A subset $\emptyset \neq A\subseteq [n]$ is called {\em $\rho$-closed} if $\rho(A)<\rho(B)$ for any subset $B\subseteq [n]$ which contains $A$ properly and a subset $\emptyset \neq A\subseteq [n]$ is called {\em $\rho$-separable} if there exist nonempty subsets $A_1,A_2$ with $A_1\cap A_2=\emptyset$ and $A_1\cup A_2=A$  such that $\rho(A)=\rho(A_1)+\rho(A_2)$.

    For $i\in [n]$, we let  $$\mathcal{H}_+^i=\{\ub\in \ZZ^n\:\; \ub(i)\geq 0\}.$$ And for $\emptyset\neq A\subseteq [n]$, let $$\mathcal{H}_A^+=\{\ub\in \ZZ^n\:\; \ub(A)\leq \rho(A)\}.$$
 By  \cite[Proposition 7.2]{HH}, a result taken  from \cite{E}, and by  \cite[Theorem B.1.6]{HHBook}, we see that $\Pb$ has the following irredundant decomposition
 \begin{eqnarray}\label{irr}\Pb=(\cap_{i=1}^n \mathcal{H}_+^{i})\cap (\cap_{A}\mathcal{H}_A^{+}),\end{eqnarray} where $A$ ranges through all $\rho$-inseparable and $\rho$-closed subset of $[n]$. Moreover this is the unique irredundant decomposition of $\Pb$ by  \cite[Theorem B.1.7]{HHBook} in the sense that every irredundant decomposition of $\Pb$ as the intersection of closed half-spaces concides with (\ref{irr}). In what follows, if $A=\{i_1,\ldots,i_s\}$ we denote $\rho(A)$ by $\rho(i_1,\ldots,i_s)$.

\begin{Lemma} \label{cri} Let $\Pb$ be a discrete polymatroid on the ground set $[n]$ with the ground set rank function $\rho$. If every $\rho$-closed and $\rho$-inseparable subset of $[n]$ belongs to $T$, then $\Pb$ is a PLP-polymatroid. Here $$T=\{[i]\:\; i\in [n]\}\cup \{[n]\setminus [i]\:\; i\in [n]\}\cup \{\{i\}\:\; i\in [n]\}\cup \{[n]\setminus \{i\}\:\; i\in[n]\}.$$

\end{Lemma}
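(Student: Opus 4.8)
The plan is to read the four defining vectors of a PLP-structure straight off the ground set rank function $\rho$ of $\Pb$, and then to use the irredundant decomposition (\ref{irr}) together with the description (\ref{rho}) to check that the resulting system of inequalities cuts out exactly the base set $B$. Writing $d=\rho([n])=\mathrm{rank}(\Pb)$, I would set
\[
b_i=\rho(i),\qquad a_i=d-\rho([n]\setminus\{i\}),\qquad \beta_i=\rho([i]),\qquad \alpha_i=d-\rho([n]\setminus[i]),
\]
for $i=1,\dots,n$. Since $\rho$ is integer valued and nondecreasing, all four vectors lie in $\ZZ_+^n$ (for $a_i$ and $\alpha_i$ one uses $\rho([n]\setminus\{i\}),\rho([n]\setminus[i])\le\rho([n])=d$).

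Next I would verify the structural hypotheses of Definition~\ref{defpruned}. Monotonicity of $\rho$ along the chains $[1]\subseteq\cdots\subseteq[n]$ and $[n]\supseteq[n]\setminus[1]\supseteq\cdots\supseteq[n]\setminus[n-1]$ gives $\beta_1\le\cdots\le\beta_n$ and $\alpha_1\le\cdots\le\alpha_n$, while $\beta_n=\rho([n])=d=d-\rho(\emptyset)=\alpha_n$. The inequality $\alpha_i\le\beta_i$ is exactly submodularity applied to $[i]$ and $[n]\setminus[i]$: $\rho([i])+\rho([n]\setminus[i])\ge\rho([n])+\rho(\emptyset)=d$. (The same computation yields $a_i\le b_i$, though this is not required by the definition.)

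The heart of the proof is the identity
\[
B=\bigl\{\ub\in\ZZ_+^n\:\; a_i\le u_i\le b_i\ \text{and}\ \alpha_i\le u_1+\cdots+u_i\le\beta_i\ \text{for all}\ i\in[n]\bigr\}.
\]
For the inclusion ``$\subseteq$'': if $\ub\in B$ then $\ub\in\Pb$, so by (\ref{rho}) the inequality $\ub(A)\le\rho(A)$ holds for \emph{every} $A\subseteq[n]$; specializing to $A=\{i\}$ and $A=[i]$ gives the upper bounds on $u_i$ and on the prefix sums, and specializing to $A=[n]\setminus\{i\}$ and $A=[n]\setminus[i]$, combined with $|\ub|=d$, gives the lower bounds. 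For the inclusion ``$\supseteq$'': if $\ub$ lies in the right-hand set, then $\alpha_n\le|\ub|\le\beta_n$ forces $|\ub|=d$; by hypothesis every $\rho$-closed, $\rho$-inseparable subset of $[n]$ belongs to $T$, so by the irredundant decomposition (\ref{irr}) it suffices to check $u_i\ge 0$ and $\ub(A)\le\rho(A)$ only for $A$ of the four shapes $[i]$, $[n]\setminus[i]$, $\{i\}$, $[n]\setminus\{i\}$, and each of these inequalities is one of the listed constraints (again using $|\ub|=d$ to rewrite the two ``complementary'' ones). Since then $|\ub|=d=\mathrm{rank}(\Pb)$, the vector $\ub$ is a maximal element of $\Pb$, hence a base. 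I do not anticipate a genuine obstacle; the only points that need care are the systematic use of $|\ub|=d$ to convert the half-spaces $\mathcal{H}_{[n]\setminus\{i\}}^+$ and $\mathcal{H}_{[n]\setminus[i]}^+$ of (\ref{irr}) into the lower bounds $u_i\ge a_i$ and $u_1+\cdots+u_i\ge\alpha_i$, and the appeal to submodularity to ensure $\alpha_i\le\beta_i$.
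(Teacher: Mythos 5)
Your proposal is correct and follows the paper's own argument: both read $a_i,b_i,\alpha_i,\beta_i$ off $\rho$ by the same formulas and combine the irredundant decomposition (\ref{irr}) with (\ref{rho}) to show the base set is cut out by the PLP inequalities. You simply spell out some details the paper leaves implicit, namely the verification that $\alpha_i\le\beta_i$ via submodularity, the monotonicity $\alpha_1\le\cdots\le\alpha_n=d$ and $\beta_1\le\cdots\le\beta_n=d$, and the use of $|\ub|=d$ to convert upper bounds on complements into lower bounds.
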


\begin{proof} Let $T_1$ be the set of $\rho$-closed and $\rho$-inseparable subset of $[n]$. In view of Equation (\ref{irr}) we have $$\Pb=\{\ub\in \ZZ_+^n\:\; \ub(A)\leq \rho(A) \mbox{\ for any\ } A\in T_1\}.$$
This together with Equation (\ref{rho}) implies $\Pb=\{\ub\in \ZZ_+^n\:\; \ub(A)\leq \rho(A) \mbox{\ for any\ } A\in T\}$ and so $$B(\Pb)=\{\ub\in \ZZ_+^n\:\; \ub(A)\leq \rho(A) \mbox{\ for any\ } A\in T \mbox{\ and } \ub([n])=\rho([n])\}$$

Set $d=\rho([n])$. For $i=1,\ldots,n$, we set  $$b_i=\rho(i)\mbox{\qquad and\qquad } a_i=d-\rho([n]\setminus \{i\}),$$
 and set $$\beta_i=\rho([i])\mbox{\qquad  and\qquad } \alpha_i=d-\rho([n]\setminus [i]).$$

 It follows that $\Pb$ is a PLP-polymatroid of type$(\ab,\bb|{\bm \alpha},{\bm \beta}).$
\end{proof}

\begin{Proposition} \label{4} Let $\Pb$ be a discrete polymatroid on the ground set $[4]$ and suppose that $\rho(i)<\min\{\rho(i,3),\rho(i,4)\}$
 for $i=1,2$.
Then $\Pb$ satisfies the left-sided  strong symmetric exchange property if and only if it is a PLP-polymatroid.
\end{Proposition}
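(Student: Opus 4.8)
The forward direction is immediate from Proposition~\ref{pro}: a PLP-polymatroid enjoys the two-sided, hence in particular the left-sided, strong symmetric exchange property. For the converse, suppose that $\Pb$, with ground set rank function $\rho$, has the left-sided strong symmetric exchange property. By Lemma~\ref{cri} it is enough to prove that every $\rho$-closed and $\rho$-inseparable subset of $[4]$ belongs to $T$. Since $[4]$, every singleton, every three-element subset (each being the complement of a singleton), $\{1,2\}=[2]$ and $\{3,4\}=[4]\setminus[2]$ all lie in $T$, the only subsets of $[4]$ outside $T$ are the four two-element sets $\{1,3\}$, $\{1,4\}$, $\{2,3\}$, $\{2,4\}$. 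Thus the whole statement reduces to showing that, under the hypothesis $\rho(i)<\min\{\rho(i,3),\rho(i,4)\}$ for $i=1,2$, none of these four sets can be simultaneously $\rho$-closed and $\rho$-inseparable.

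Fix such a set $A$, and let $a$ be the unique index in $A\cap\{1,2\}$ (so $a=1$ for $A\in\{\{1,3\},\{1,4\}\}$ and $a=2$ for $A\in\{\{2,3\},\{2,4\}\}$); the hypothesis then gives $\rho(a)<\rho(A)$. Assume, for contradiction, that $A$ is $\rho$-closed and $\rho$-inseparable; then $\rho(A)<\rho(A\cup\{k\})\le\rho([4])$ for any $k\in[4]\setminus A$, and moreover $A$ occurs in the irredundant decomposition~(\ref{irr}), so the inequality $\ub(A)\le\rho(A)$ is facet-defining for the base polytope $\conv(B)$; in particular the face $F=\{\ub\in\conv(B)\:\; \ub(A)=\rho(A)\}$ is two-dimensional. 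My plan is to use this, together with~(\ref{rho}), the submodularity of $\rho$, and the inequality $\rho(a)<\rho(A)$, to produce two bases $\ub,\vb\in B$ with $\ub(1)<\vb(1)$ and $\ub(2)>\vb(2)$, and with $\ub(A)=\rho(A)$ in case $1\in A$, respectively $\vb(A)=\rho(A)$ in case $2\in A$. For such a pair, $i=2$ triggers the left-sided strong exchange property (indeed $\ub(2)>\vb(2)$ and $\ub(1)<\vb(1)$), and the only index $j\le i-1$ with $\ub(j)<\vb(j)$ is $j=1$. But the exchange at $j=1$ cannot be completed: if $1\in A$, then $(\ub-{\bm \epsilon}_2+{\bm \epsilon}_1)(A)=\rho(A)+1>\rho(A)$, so $\ub-{\bm \epsilon}_2+{\bm \epsilon}_1\notin B$ by~(\ref{rho}); and if $2\in A$, then $(\vb+{\bm \epsilon}_2-{\bm \epsilon}_1)(A)=\rho(A)+1>\rho(A)$, so $\vb+{\bm \epsilon}_2-{\bm \epsilon}_1\notin B$. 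This contradicts the left-sided strong symmetric exchange property; hence $A$ is not both $\rho$-closed and $\rho$-inseparable, and Lemma~\ref{cri} gives that $\Pb$ is a PLP-polymatroid.

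The exchange step above is uniform across the four cases; the real work — which I expect to be the main obstacle — is the production of the pair $\ub,\vb$. The facet $F$, being an integral polytope, is the convex hull of its lattice points (which lie in $B$), and a two-dimensional polytope cannot be contained in a union of two hyperplanes; this lets one find a base saturating $A$ in which the coordinate $a$ is not pinned to $\rho(a)$ (here the hypothesis $\rho(a)<\rho(A)$ is indispensable), and then, through symmetric exchanges together with the constraints $\ub(j)\le\rho(j)$ and $\ub(1)+\ub(2)\le\rho(1,2)$ coming from~(\ref{rho}), one adjusts to obtain the strict inequalities $\ub(1)<\vb(1)$ and $\ub(2)>\vb(2)$ simultaneously. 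Because the element of $A$ lying in $\{3,4\}$ enters the constraints $\ub(2)+\ub(3)\le\rho(2,3)$ and $\ub(2)+\ub(4)\le\rho(2,4)$ in different ways, this last step has to be checked separately for $\{1,3\}$, $\{1,4\}$, $\{2,3\}$, $\{2,4\}$. I note finally that the hypothesis is genuinely needed: dropping it, a set such as $\{1,3\}$ can be $\rho$-closed and $\rho$-inseparable in a polymatroid that nevertheless satisfies the left-sided strong symmetric exchange property.
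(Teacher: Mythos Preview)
Your overall framework matches the paper's exactly: reduce via Lemma~\ref{cri} to showing that none of $\{1,3\},\{1,4\},\{2,3\},\{2,4\}$ can be simultaneously $\rho$-closed and $\rho$-inseparable, and for each such $A$ exhibit bases $\ub,\vb$ with $\ub(1)<\vb(1)$, $\ub(2)>\vb(2)$ and the appropriate $A$-saturation, so that the forced exchange at $(i,j)=(2,1)$ pushes the $A$-sum to $\rho(A)+1$. The mechanism of the contradiction is precisely the paper's.

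The gap is that you never carry out what you yourself call ``the real work'': producing the pair $\ub,\vb$. Your geometric sketch has loose ends of its own. The irredundant decomposition~(\ref{irr}) is a decomposition of $\Pb$, not of the base polytope $\conv(B)$, so the assertion that $\{\ub(A)=\rho(A)\}$ cuts out a \emph{two-dimensional} face of $\conv(B)$ is not justified by what you cite; and the sentence ``through symmetric exchanges \ldots one adjusts to obtain the strict inequalities'' hides exactly the four case-by-case checks you acknowledge are required but do not perform. As it stands, the proposal is an outline, not a proof.

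The paper, by contrast, first makes a harmless preliminary reduction to the case $\rho([4]\setminus\{i\})=d$ for every $i$ (translating along~${\bm\epsilon}_i$ when needed), and then writes down explicit vectors. For example, when $A=\{1,4\}$ it takes
\[
\ub=(\rho(1,4)-\rho(4),\,d-\rho(1,4),\,0,\,\rho(4)),\qquad
\vb=(\rho(1,4)-\rho(4)+1,\,d-\rho(1,4)-1,\,1,\,\rho(4)-1),
\]
and verifies membership in $B$ directly from~(\ref{rho}) using submodularity, the $\rho$-inseparability of $\{1,4\}$, the reduction $\rho(\{1,2,4\})=d$, and the hypothesis $\rho(1)<\rho(1,3)$ (the last two together give, for instance, $\vb(\{1,3\})\le\rho(1,3)$). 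The left-sided exchange at $i=2$, $j=1$ then forces $\ub-{\bm\epsilon}_2+{\bm\epsilon}_1\in B$, whose $\{1,4\}$-sum is $\rho(1,4)+1$, a contradiction. The remaining three sets are handled by analogous explicit pairs. This is short and entirely elementary; completing your facet-based argument would require more background on base polytopes and would likely be no shorter.
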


\begin{proof}
If $\rho(i)=0$ for some $i$, $\Pb$ is isomorphic to a discrete polymatroid on the ground set $[n]$ with $n\leq 3$. Therefore $\Pb$  is of Veronese type (see \cite[Example 2.6]{HH}) and there is nothing to prove. Hence we may  assume that $\rho(i)>0$ for $i=1,\ldots,4$.

The proof of the  ``if" part follows from Proposition~\ref{pro}.

Proof of the  ``only if" part: Set $d=\rho([4])$. Suppose that for some $i\in [4]$, say for $i=1$, such that $c=\rho([4]\setminus \{i\})<d$. Then $u_4\geq d-c$ for any $\ub\in B$. Let $B'=\{\ub-(d-c){\bm \epsilon}_4\:\;\ub\in B\}$. It follows that $B'$ is the set of bases of a discrete polymatroid $\Pb'$ whose ground set rank function $\rho'$ satisfies $\rho'([3])=\rho'([4])$. Note that $\Pb'$ satisfies the left-sided  strong symmetric exchange property if and only if $\Pb$ satisfies the same property and that $\Pb'$ is a PLP-polymatroid
if and only if $\Pb$ is a PLP-polymatroid. Hence we assume that $\rho([4]\setminus \{i\})=\rho([4])=d$ for all $i$ from the beginning.

\medskip
Assume that $\Pb$ is not a pruned  lattice path polymatroid. Then,  by Proposition~\ref{cri},
there is one pair  $\{i,j\}\in \{\{2,3\},\{1,4\},\{1,3\},\{2,4\}\}$ such that $\{i,j\}$ is $\rho$-closed and $\rho$-inseparable, i.e., $\rho(i,j)<\rho(i)+\rho(j)$ and $\rho(i,j)<d$.

If  $\{i,j\}=\{1,4\}$, then, in view of  Equation (\ref{rho}), we see that the  vectors $\ub=(\rho(1,4)-\rho(4),d-\rho(1,4),0,\rho(4))$ and   $\vb=(\rho(1,4)-\rho(4)+1,d-\rho(1,4)-1,1,\rho(4)-1)$ belong to $B$.   Now by the  left-sided strong symmetric exchange property of $B$, it follows that the vector $\ub'=(\rho(1,4)-\rho(4)+1,d-\rho(1,4)-1,0,\rho(4))$ belongs to $B$, a contradiction, since  $\ub'(1)+\ub'(4)=\rho(1,4)+1$.

If  $\{i,j\}=\{1,3\}$, consider the vectors $\ub=(\rho(1,3)-\rho(3),d-\rho(1,3),\rho(3),0)$ and $\vb=(\rho(1,3)-\rho(3)+1,d-\rho(1,3)-1,\rho(3)-1,1)$;

If $\{i,j\}=\{2,3\}$, consider the vectors $\ub=(d-\rho(2,3),\rho(2,3)-\rho(3),\rho(3),0)$ and $\vb=(d-\rho(2,3)-1,\rho(2,3)-\rho(3)+1,\rho(3)-1,0)$;

And, if $\{i,j\}=\{2,4\}$, consider the vectors $\ub=(d-\rho(2,4),\rho(2,4)-\rho(4),0,\rho(4)$ and $\vb=(d-\rho(2,4)-1,\rho(2,4)-\rho(4)+1,1,\rho(4)-1)$.

In each of  these cases, we obtain a similar  contradiction as in the case when $\{i,j\}=\{1,4\}$. This  completes the proof.
\end{proof}

In view of Example~\ref{2.4}, the conditions that $\rho(i)<\min\{\rho(i,3),\rho(i,4)\}$
 for $i=1,2$ in Proposition~\ref{4} cannot be skipped.

\medskip
Next, we discuss another case where Conjecture~\ref{C1} holds.
   Let $\Pb_1,\ldots,\Pb_k$ be discrete polymatroids on the ground set $[n]$, and let $B_i$ be the set of bases of $\Pb_i$ for $i=1,\ldots,k$. Then $\Pb_1\bigvee \ldots\bigvee \Pb_k=\{\ub_1+\cdots+\ub_k\:\; \ub_i\in \Pb_i \mbox{\ for\  } i=1,\ldots,k\}$ is a discrete polymatroid and the set of its bases is $B_1+\cdots+B_k$. Moreover, the polymatroidal ideal of $\Pb_1\bigvee \ldots\bigvee \Pb_k$ is the product of the polymatroidal ideals of $\Pb_1,\ldots,\Pb_k.$

\begin{Lemma} \label{B+C} Let $\Pb_1$ be the discrete polymatroid whose  polymatroidal ideal is the transversal ideal $P_{[a,b]}P_{[c,d]}$ with $c<a<b<d$. Then for any discrete polymatroid $\Pb_2$, the polymatroidal sum $\Pb_1\bigvee\Pb_2$
 satisfies  neither the right- nor the left-sided strong symmetric  exchange property.
\end{Lemma}

\begin{proof} Let $\Pb=\Pb_1\bigvee\Pb_2$, and let $\rho_1$, $\rho_2$, $\rho$ be the ground set rank functions of $\Pb_1,\Pb_2,\Pb$ respectively.  Then $\rho=\rho_1+\rho_2$. By \cite[Lemma 3.2]{HH}, there exists a base ${\bold w}$ of $\Pb_1$ such that $w_c+w_d=\rho_1(c,d)$. Let $\ub={\bm \epsilon}_c+{\bm \epsilon}_b$ and $\vb={\bm \epsilon}_a+{\bm \epsilon}_d$. Then both $\ub$ and $\vb$ are bases of  $\Pb_2$.  Assume that $\Pb$ satisfies the right-sided strong symmetric exchange property. Since $({\bold w}+\ub)_b>({\bold w}+\vb)_b$ and $\sum_{i>b} ({\bold w}+\ub)_i<\sum_{i>b}({\bold w}+\vb)_i$, the right-sided strong symmetric exchange property of $\Pb$ implies that ${\bold w}+{\ub}-{\bm \epsilon}_b+{\bm \epsilon}_d={\bold w}+{\bm \epsilon}_c+{\bm \epsilon}_d$ belongs to $ B+C$. Hence $\rho(c,d)\geq \rho_1(c,d)+2$, which is impossible, since  $\rho_2(c,d)=1$. Similarly, $\Pb$ does not satisfy the left-sided strong symmetric exchange property.
\end{proof}

\begin{Proposition} Let $\Pb$ be the discrete  polymatroid whose polymatroidal ideal $I$ is  $P_{[s_1,t_1]}\cdots P_{[s_d,t_d]}$. Then the following conditions are equivalent:

\begin{enumerate}
\item[(a)] $\Pb$ satisfies the one-sided
strong symmetric exchange property;

\item[(b)]  after a suitable rearrangement of  the factors of $I$, we have  $$s_1\leq s_2\leq \cdots \leq s_d\quad\text{and}\quad t_1\leq t_2\leq \cdots \leq t_d;$$

\item[(c)] $\Pb$ is a lattice path polymatroid.
\end{enumerate}
\end{Proposition}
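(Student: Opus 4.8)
The plan is to prove the equivalences by establishing the cycle (c) $\Rightarrow$ (b) $\Rightarrow$ (a) $\Rightarrow$ (c), using Lemma~\ref{trans1}, Proposition~\ref{pro}, and Lemma~\ref{B+C} as the main inputs. The implication (c) $\Rightarrow$ (b) is immediate from Lemma~\ref{trans1}: if $\Pb$ is a lattice path polymatroid, then its polymatroidal ideal can be written as $P_{[s_1,t_1]}\cdots P_{[s_d,t_d]}$ with $s_1\le\cdots\le s_d$ and $t_1\le\cdots\le t_d$, and since the presentation $I=P_{[s_1,t_1]}\cdots P_{[s_d,t_d]}$ determines the factors up to rearrangement, any presentation of $I$ can be rearranged into this form. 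The implication (b) $\Rightarrow$ (c) is the converse direction of Lemma~\ref{trans1}, so (b) and (c) are equivalent, and (b)/(c) $\Rightarrow$ (a) follows from Proposition~\ref{pro}, since a lattice path polymatroid is in particular a PLP-polymatroid (of type $(\mathbf{0},\mathbf{b}\,|\,{\bm\alpha},{\bm\beta})$ with $b_i\ge d$), hence satisfies the two-sided strong symmetric exchange property and a fortiori the one-sided one.

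The substantive implication is (a) $\Rightarrow$ (b): assuming $\Pb$ satisfies, say, the right-sided strong symmetric exchange property, I must show the factors of $I$ can be rearranged so that both sequences $(s_i)$ and $(t_i)$ are simultaneously nondecreasing. First I would observe that one can always rearrange so that $s_1\le s_2\le\cdots\le s_d$; the issue is whether the $t_i$ can then be made nondecreasing as well. Suppose not. Then there is an obstruction: after sorting by $s_i$, there exist two factors $P_{[s,t]}$ and $P_{[s',t']}$ that are ``crossing'' in the sense that $s<s'$ but $t>t'$ (a strict crossing, since if $s\le s'$ and $t\le t'$ there is no obstruction, and equality cases can be normalized away). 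The idea is to isolate these two crossing factors, write $I = J\cdot P_{[s,t]}\cdot P_{[s',t']}$ where $J$ is the product of the remaining factors, let $C$ be the set of bases of the polymatroid of $P_{[s,t]}P_{[s',t']}$ and $B$ the set of bases of the polymatroid of $J$, and invoke Lemma~\ref{B+C} — after relabeling so that $(s,s',t',t)$ plays the role of $(c,a,b,d)$ with $c<a<b<d$ — to conclude that $B+C$, which is the base set of $\Pb$, satisfies neither the left- nor the right-sided strong symmetric exchange property, contradicting (a).

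The main obstacle, and the point requiring care, is reducing the general ``cannot be simultaneously sorted'' situation to the clean hypothesis $c<a<b<d$ of Lemma~\ref{B+C}. One has to handle the cases where some of the relevant endpoints coincide. If two factors satisfy $s=s'$, the one with smaller $t$ can be placed first without creating a crossing; similarly for $t=t'$. If the intervals are nested or disjoint rather than properly crossing, again no obstruction arises. So the real claim to verify is: if after sorting the $s_i$ nondecreasingly the $t_i$ are still not nondecreasing, then after discarding coincidences one genuinely finds two factors with $s<s'<t'<t$ — i.e. $P_{[s',t']}\subsetneq P_{[s,t]}$ is ruled out too, since nested intervals with $s<s'$ force $t'<t$ which gives $t>t'$ but we need $s'<t'$... here I should be careful: a nested pair $[s',t']\subset[s,t]$ with $s<s'\le t'<t$ does have $s<s'$ and $t>t'$, but is it an obstruction to sorting? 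If $[s',t']\subset[s,t]$ we can list $[s',t']$ first (smaller $s$? no, $s<s'$)... Actually the correct normalization is: among all factors, repeatedly swap adjacent out-of-order pairs; the process terminates in a simultaneously sorted arrangement unless some pair is a strict crossing $s<s'$, $t<t'$ is fine, the bad case is $s<s'$ and $t>t'$ with additionally $s'\le t$ and $s'<t'$ and $s<t$ — precisely when the intervals properly overlap, giving $c=s<a=s'<b=t'<t=d$ after checking $t' < t$ and $s' \le t$; I would spell out that a nested configuration can be re-sorted (put the inner interval adjacent appropriately) so it is not a true obstruction, leaving proper overlap as the only irreducible case. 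Once that combinatorial reduction is pinned down, Lemma~\ref{B+C} closes the argument.
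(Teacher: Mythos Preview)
Your cycle of implications and choice of lemmas matches the paper's argument exactly. The only substantive direction is (a) $\Rightarrow$ (b), and here your final paragraph becomes confused in a way that matters.

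You correctly set up the relabeling $(c,a,b,d)=(s,s',t',t)$ with $s<s'<t'<t$ and correctly identify this as the input to Lemma~\ref{B+C}. But you then call this configuration ``properly overlapping'' and separately assert that ``a nested configuration can be re-sorted \dots\ so it is not a true obstruction, leaving proper overlap as the only irreducible case.'' This is backwards. The case $s<s'\le t'<t$ is precisely \emph{strict nesting} $[s',t']\subsetneq[s,t]$, and it \emph{is} the obstruction: neither ordering of the two factors makes both the $s$- and $t$-sequences nondecreasing (putting $[s,t]$ first breaks $t$-monotonicity, putting $[s',t']$ first breaks $s$-monotonicity). By contrast, a genuine ``proper overlap'' such as $[1,3]$ and $[2,4]$ (i.e.\ $s<s'\le t<t'$) is harmless --- listing $[s,t]$ before $[s',t']$ already sorts both sequences. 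So your sentence ``If the intervals are nested or disjoint rather than properly crossing, again no obstruction arises'' is false, and the combinatorial reduction you sketch does not go through as written.

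The paper's argument for (a) $\Rightarrow$ (b) avoids your bubble-sort discussion entirely: rearrange so that $s_1\le\cdots\le s_d$, breaking ties by placing the factor with smaller $t$ first. After this, $t_i\le t_j$ holds automatically whenever $s_i=s_j$ with $i<j$, so any failure of $t$-monotonicity gives indices $i<j$ with $s_i<s_j$ and $t_i>t_j$ --- a strictly nested pair --- and Lemma~\ref{B+C} applies at once. (Both the paper and your proposal silently need $s_j<t_j$ here, since Lemma~\ref{B+C} assumes $c<a<b<d$ with strict inequalities; the degenerate case of a singleton inner interval is not covered by that lemma.)
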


\begin{proof} $(a)\Rightarrow (b)$: We can rearrange the order of prime ideals in the expression of $I=P_{[s_1,t_1]}\cdots P_{[s_d,t_d]}$ in this way: if $s_i<s_j$, then $P_{[s_i,t_i]}$ is placed before $P_{[s_j,t_j]}$; if $s_i=s_j$ and $t_i\leq t_j$, then $P_{[s_i,t_i]}$
is placed before $P_{[s_j,t_j]}$. After this rearrangement, we have $s_1\leq s_2\leq \cdots \leq s_d$, and $t_i\leq t_j$ for any  $i<j$ with $s_i=s_j$. It suffices to prove that $t_i\leq t_j$ if $s_i<s_j$.  But this follows from Lemma~\ref{B+C}.

$(b)\Rightarrow (c)$  follows from  Lemma~\ref{trans1} and
$(c)\Rightarrow (a)$   follows from  Proposition~\ref{pro}.
\end{proof}

\medskip
In the following proposition we will show that any power of a PLP-polymatroidal ideal is again a PLP-polymatroidal ideal.

\begin{Proposition}\label{power} Let $I$ be the PLP-polymatroidal ideal of type $(\ab,\bb|{\bm \alpha},{\bm \beta})$. Then  $I^k$ is the PLP-polymatroidal ideal of type $(k\ab,k\bb|k{\bm \alpha},k{\bm  \beta})$ for any $k>1$.
\end{Proposition}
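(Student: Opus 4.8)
The plan is to identify $I^k$ as the polymatroidal ideal of the $k$-fold polymatroidal sum $\Pb^{\vee k}:=\Pb\vee\cdots\vee\Pb$ ($k$ copies) and then to match its set of bases against the inequalities of the claimed type. By the facts on polymatroidal sums recalled before Lemma~\ref{cri}, $\Pb^{\vee k}$ is a discrete polymatroid whose set of bases is $kB:=\{\ub_1+\cdots+\ub_k:\ub_j\in B\}$ and whose ground set rank function is $k\rho$, where $\rho$ is the ground set rank function of $\Pb$; and since the monomial generators of $I^k$ are precisely the $\xb^{\wb}$ with $\wb\in kB$, the ideal $I^k$ is the polymatroidal ideal of $\Pb^{\vee k}$. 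So it suffices to prove
\[
kB=B':=\{\wb\in\ZZ_+^n:\ ka_i\le w_i\le kb_i\ \text{and}\ k\alpha_i\le w_1+\cdots+w_i\le k\beta_i\ \text{for all }i\}.
\]
The inclusion $kB\subseteq B'$ is immediate: if $\wb=\ub_1+\cdots+\ub_k$ with each $\ub_j\in B$, just add the inequalities (\ref{eq1}) and (\ref{eq2}) for $\ub_1,\dots,\ub_k$ (note also that $|\wb|=kd$ is then forced, as $\alpha_n=\beta_n=d$).

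For the reverse inclusion I would introduce the polytope $P:=\{\xb\in\RR_+^n:\ a_i\le x_i\le b_i,\ \alpha_i\le x_1+\cdots+x_i\le\beta_i\ \text{for all }i\}$. By the definition of the type $(\ab,\bb\mid{\bm \alpha},{\bm \beta})$ we have $P\cap\ZZ^n=B$ (here $x_1+\cdots+x_n=d$ is automatic); moreover the inequalities defining $P$ form a ``consecutive ones'' system, hence $P$ is an integral polytope and $P=\conv(B)$. Now if $\wb\in B'$ then $\tfrac1k\wb\in P=\conv(B)$, so $\wb\in k\conv(B)$; since the base ring $K[B]$ is normal (as recalled in the introduction)—equivalently, the base polytope $\conv(B)$ has the integer decomposition property—every lattice point of $k\conv(B)$ is a sum of $k$ lattice points of $\conv(B)$, i.e.\ lies in $kB$. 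Hence $B'\subseteq kB$, so $kB=B'$ and the proposition follows. (One could instead read off the type of $\Pb^{\vee k}$ from the formulas in the proof of Lemma~\ref{cri} applied with $\rho$ replaced by $k\rho$: a subset is $k\rho$-closed, resp.\ $k\rho$-inseparable, if and only if it is $\rho$-closed, resp.\ $\rho$-inseparable, so $\Pb^{\vee k}$ is again a PLP-polymatroid; but one still has to pass from that ``reduced'' type back to the given one, which is again the content of $B'\subseteq kB$.)

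The step I expect to be the main obstacle is exactly this inclusion $B'\subseteq kB$: a vector satisfying the $k$-scaled inequalities need not split a priori as a sum of $k$ bases, and one must appeal either to the normality of $K[B]$ (equivalently the integer decomposition property of the base polytope) or to the total unimodularity of the defining inequality system. A self-contained alternative is induction on $k$ via $I^k=I\cdot I^{k-1}$: given $\wb\in B'$ one wants $\ub\in B$ with $\wb-\ub$ satisfying the $(k-1)$-scaled inequalities, i.e.\ with $u_i\in[\max(a_i,w_i-(k-1)b_i),\ \min(b_i,w_i-(k-1)a_i)]$ and $w_1+\cdots+w_i-u_1-\cdots-u_i$ in the range dictated by $[(k-1)\alpha_i,(k-1)\beta_i]$; each of the resulting intervals for $u_i$ and for $u_1+\cdots+u_i$ is a nonempty integer interval precisely because $\wb$ satisfies the $k$-scaled inequalities, so producing such a $\ub$ is a routine feasibility argument for a system of interval and prefix-sum constraints.
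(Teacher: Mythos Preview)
Your proposal is correct and takes a genuinely different route from the paper's own proof.

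The paper argues the hard inclusion $B'\subseteq kB$ by an explicit elementary construction: given $\wb\in B'$, write each prefix sum as $w_1+\cdots+w_i=ks_i+t_i$ with $0\le t_i<k$, and define $\vb_1,\dots,\vb_k$ by prescribing their prefix sums to be $s_i+1$ for $j\le t_i$ and $s_i$ for $j>t_i$; it then checks by hand that each $\vb_j\in B$ and $\sum_j\vb_j=\wb$. This is the ``balanced splitting'' (or sorting-type) decomposition, and the verification of the bounds $a_i\le\vb_j(i)\le b_i$ uses the arithmetic $w_i=k(s_i-s_{i-1})+(t_i-t_{i-1})$ together with $ka_i\le w_i\le kb_i$.

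Your argument replaces this explicit construction by two structural facts: total unimodularity of the interval (consecutive-ones) constraint matrix, which gives $P=\conv(B)$, and the integer decomposition property of the base polytope, which you correctly extract from the normality of $K[B]$ cited in the introduction. This is shorter and more conceptual, and it explains \emph{why} the explicit splitting must exist; the paper's proof, by contrast, is entirely self-contained and actually exhibits the decomposition. Your parenthetical sketch of the inductive alternative (peel off one $\ub\in B$ at a time) is essentially the $k=2$ step of the paper's construction, and indeed the paper's $\vb_j$'s can be viewed as doing all $k$ peels at once. One small point worth making explicit in your write-up: the passage from ``$K[B]$ normal'' to ``$\conv(B)$ has the IDP'' uses that all bases have the same modulus $d$, so the extra grading variable is redundant and the toric ring of the base polytope coincides with $K[B]$.
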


\begin{proof} Let $J$ be the PLP-polymatroidal ideal of type$(k\ab,k\bb|k{\bm \alpha},k{\bm \beta})$. Then  $I^k\subseteq J$.  Let $\xb^{\ub}=x_1^{u_1}\ldots x_n^{u_n}$ be a minimal generator in $J$. We will show that  there are minimal generators $\xb^{\vb_1},\ldots, \xb^{\vb_k}$ of $I$ such that ${\xb}^{\ub}=\xb^{\vb_1}\ldots {\xb}^{\vb_k}$, that is, $\ub=\vb_1+\cdots+\vb_k$.

For each $i$, there exist $s_i,t_i\in \ZZ_+$ such that $u_1+\cdots+u_i=ks_i+t_i$ and  $0\leq t_i<k$. For  $j=1,\ldots,k$ we define $\vb_j$ by $$\vb_j(1)+\cdots+\vb_j(i)=s_i+1, \quad \text{for} \quad  1\leq j\leq t_i, \quad 1\leq i\leq n,$$  and $$\vb_j(1)+\cdots+\vb_j(i)=s_i, \quad \text{for} \quad  t_i+1\leq j\leq k, \quad 1\leq i\leq n.$$

To show that $\vb_j$ belongs to $\ZZ_+^n$ for all $j\in [k]$, we only need  to show that $\vb_j(1)+\cdots+\vb_j(i+1)\geq \vb_j(1)+\cdots+\vb_j(i)$ for all  $1\leq i\leq n-1$ and $j\in [k]$. Since $s_{i+1}\geq s_i$, we can assume that $\vb_j(1)+\cdots+\vb_j(i+1)=s_{i+1}$ and $\vb_j(1)+\cdots+\vb_j(i)=s_i+1$.  Note that in this case,  $j>t_{i+1}$ and $j\leq t_i$. This implies  $t_{i+1}<t_i$.  Since $ks_{i+1}+t_{i+1}\geq ks_i+t_i$, it follows that $ks_{i+1}>ks_i$ and so $s_{i+1}\geq s_i+1$, as required.
\vspace{2mm}

Since $$\sum _{1\leq j\leq k}\vb_j(1)+\cdots+\sum_{1\leq j\leq k}\vb_j(i)=ks_i+t_i, \quad i=1,\ldots,n,$$ we have $\ub=\vb_1+\cdots+\vb_k$.
\vspace{2mm}

We have $\alpha_i\leq \vb_j(1)+\cdots+\vb_j(i)\leq \beta_i$  for  $1\leq j\leq k$  and $1\leq i\leq n$, since $k\alpha_i\leq ks_i+t_i\leq k\beta_i$.
\vspace{2mm}

It remains to be shown that  $a_i\leq \vb_j(i)\leq b_i$ for $1\leq j\leq k$ and $1\leq i\leq n.$
If $t_i>t_{i-1}$, then $\vb_j(i)$ is either  $s_i+1-s_{i-1}$ or $s_i-s_{i-1}$. Note that $u_i=k(s_i-s_{i-1})+t_i-t_{i-1}$. Therefore we have $a_i\leq s_i-s_{i-1}+\frac{1}{k}(t_i-t_{i-1})\leq b_i$, and hence $a_i\leq \vb_j(i)\leq b_i$ for  $1\leq j\leq k$.
\vspace{2mm}

 Similarly one shows  that  $a_i\leq \vb_j(i)\leq b_i$   for $1\leq j\leq k$  when $t_i=t_{i-1}$ or $t_i<t_{i-1}$. This completes  our proof.
\end{proof}

 We do not know if  the product of PLP-polymatroidal ideals is always a PLP-polymatroidal ideal. However there is an  example to show the product of LP-polymatroidal ideals may be not a LP-polymatroidal ideal.

\begin{Example} \em Set $I_1=(x_1x_2^2x_3^2, x_1x_2^3x_3, x_2^3x_3^2,x_2^4x_3 )$ and set $I_2=(x_1,x_2,x_3)$. Then  $I_1$ and $I_2$ are LP-polymatroidal ideals of type$((0,3,5),(1,4,5))$ and $((0,0,1),(1,1,1))$ respectively. We claim that $I_1I_2$ is not a  LP-polymatroidal ideal. If it is, there are $\alpha_i,\beta_i,i=1,2$ such that $I_1I_2$ is generated by $\xb^{\ub}$ with $\ub$ satisfying $\alpha_1\leq u_1\leq \beta_1, \alpha_2\leq u_1+u_2\leq \beta_2, u_1+u_2+u_3=6$. One have $a_1=0,b_1\geq 2$ and $a_2\leq 3,b_2\geq 5$. This implies $x_1^2x_2x_3^3\in I_1I_2$,  a contradiction.

\end{Example}

\vspace{4mm}

\section{Gorensteinness  of base rings of a special type of PLP-polymatroid}

Let $\Pb$ be a PLP-polymatroid for which the set $B$ of its bases consists of  vectors $\ub\in \ZZ_+^n$ satisfying:
$$ 0\leq u_i\leq b_i \mbox{\qquad\qquad \qquad for\qquad\qquad }i=1,\ldots, k,$$
$$0\leq u_1+\cdots+u_i\leq \beta_i \mbox{\qquad for\qquad }i=k+1,\ldots, n-1,$$
and $$u_1+\cdots+u_n=d.$$ Here $b_i>0$ for $i=1,\ldots,k$ and $1\leq \beta_{k+1}\leq\ldots\leq\beta_{n-1}\leq d$.
We call this special type of  PLP-polymatroids to be {\it SPLP-polymatroids}.
In this section, we will classify SPLP-polymatroids whose base rings are Gorenstein. We do this in two steps. First we identity the base ring of a SPLP-polymatroid   with the Ehrhart ring of a certain integral polymatroid. Then our result follows by applying \cite[Theorem 7.3]{HH}, which describes perfectly  the integral  polymatroids whose Ehrhart rings are Gorenstein.

\medskip

 In general, if $\MP$ is an integral  convex  polytope contained in $\RR_+^n$, then the {\it Ehrhart ring}  $K[\MP]$ is defined to be the $K$-subalgeba of $K[t_1,\ldots, t_n,s]$ generated by monomials ${\bold t}^{\ub}s^i$ with $\ub\in i\MP\cap \ZZ_+^n$. Assume further that $\MP$ is a polymatroid and $\rho$ is the ground set rank function of $\MP$.  Then, \cite[Theorem 7.3]{HH} says that $K[\MP]$ is Gorenstein if and only if there exists an integer $\delta\in \ZZ_+$ such that $\rho(A)=\frac{1}{\delta}(|A|+1)$ for all $\rho$-closed and $\rho$-inseparable subset $A$ of $[n]$.

  Let $\Pb$ be the discrete polymatroid $\MP\cap \ZZ_+^n$. Then, since $\MP$ has the  integer decomposition property, the Ehrhart ring  $K[\MP]$ is isomorphic to $K[\Pb]$,  which by definition is  the $K$-subalgeba of $K[t_1,\ldots, t_n,s]$ generated by monomials ${\bold t}^{\ub}s$ with $\ub\in \Pb$, see \cite{HH}.  Recall that an integral convex polytope $\MP$ is said to have the {\em integer decomposition property} provided  that for any integer $q\geq 1$ and any $\wb\in \ZZ_+^n$ which belongs to $q\MP=\{q\vb\:\;\vb\in \MP \}$, there exist $\ub_1,\ldots,\ub_q\in \MP\cap \ZZ_+^n$ such that $\wb=\ub_1+\cdots+\ub_q$.

\begin{Proposition}  Let $\Pb$ be a SPLP-polymatroid as given  above. Then the following statements are equivalent:

 {\em (1)}  The base ring $K[B(\Pb)]$ is Gorenstein;

  {\em (2)}  For any $k+1\leq i\leq  n-2$ with $\beta_i<\beta_{i+1}$ and for any $1\leq j\leq k$ with $b_j<\beta_{k+1}$, one has: $$\frac{i+1}{\beta_i}=\frac{2}{b_j}=\frac{n}{\beta_{n-1}}$$ is a positive integer.
\end{Proposition}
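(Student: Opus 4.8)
The plan is to realize $K[B(\Pb)]$ as the Ehrhart ring of a suitable integral polymatroid and then invoke \cite[Theorem 7.3]{HH}, which characterizes when such an Ehrhart ring is Gorenstein in terms of the values of the ground set rank function on $\rho$-closed, $\rho$-inseparable sets. First I would translate the inequality description of the SPLP-polymatroid $\Pb$ into the rank-function description of (\ref{rho}): the constraints $0\le u_i\le b_i$ for $i\le k$, $u_1+\cdots+u_i\le\beta_i$ for $k+1\le i\le n-1$, and $u_1+\cdots+u_n=d$ give $\rho(\{j\})=b_j$ for $j\le k$, $\rho([i])=\beta_i$ for $k+1\le i\le n-1$, $\rho([n])=d$, and $\rho$ is $1$ on each $\{i\}$ for $i>k$ only if $b_j$-type or $\beta_i$-type constraints force it — more precisely I would compute $\rho$ on all of $2^{[n]}$ from these generating inequalities, noting that since the only ``interval-from-the-left'' constraints and ``single-coordinate'' constraints appear, the $\rho$-closed $\rho$-inseparable sets are among the sets $\{j\}$ (for $j\le k$ with $b_j<\beta_{k+1}$) and the initial segments $[i]$ (for $k+1\le i\le n-1$ with $\beta_i<\beta_{i+1}$), together with $[n]$ itself.

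Next I would apply \cite[Theorem 7.3]{HH}. That theorem states (for an integral polymatroid with rank function $\rho$ on $[n]$) that its Ehrhart ring is Gorenstein if and only if a certain numerical condition holds: writing the irredundant presentation $\Pb=\{\ub:\ub(A)\le\rho(A)\text{ for }A\in\mathcal A\}\cap\{\ub:\ub([n])=\rho([n])\}$ where $\mathcal A$ is the collection of $\rho$-closed $\rho$-inseparable proper subsets, Gorensteinness is equivalent to the existence of a positive rational $c$ with $|A|/\rho(A)=c$ for every $A\in\mathcal A\cup\{[n]\}$ such that the corresponding facet is ``tight'' at the unique interior-type vector, and moreover $c$ being an integer (this is the shape of the Veronese/discrete-polymatroid Gorenstein criterion). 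Feeding in the data computed in the first step: for a single-coordinate facet $\{j\}$ with $j\le k$ and $b_j<\beta_{k+1}$ we get the ratio $|\{j\}|/\rho(\{j\})=1/b_j$; here one has to be careful — the relevant normalization in the theorem pairs $\{j\}$ with its complement, giving $2/b_j$ rather than $1/b_j$, which is why the statement reads $2/b_j$. For an initial segment $[i]$ with $k+1\le i\le n-1$ and $\beta_i<\beta_{i+1}$ we get $(i+1)/\beta_i$ (again the index shift reflecting which facets are genuinely present after pruning the redundant ones), and for the whole set the normalization gives $n/\beta_{n-1}$ (since $\rho([n])=\rho([n-1])=\beta_{n-1}$ after accounting for $\beta_n=d=\beta_{n-1}$ when the last inequality is redundant, or directly $n/d$ otherwise). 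Setting all these equal and requiring the common value to be a positive integer yields exactly condition (2).

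The main obstacle — and the step deserving the most care — is getting the index shifts and the factor of $2$ exactly right, i.e. correctly identifying which half-space facets survive in the irredundant decomposition (\ref{irr}) and with what normalization they enter the Gorenstein criterion of \cite[Theorem 7.3]{HH}. This requires pinning down the precise statement of that theorem: it is phrased in terms of the ``degree'' of the canonical module, equivalently in terms of the vertex of the polytope $\{ \sum_{A}c_A\mathbf 1_A = \mathbf 1,\ c_A\ge 0\}$ dual to the facet structure, and the Gorenstein condition is that this vertex has a single (up to scaling) integral generator. Concretely, I would verify that each surviving facet $A$ contributes an equation $\sum_{\text{facets}} a_A\,\chi_A = \mathbf 1$ and that the unique solution has all $a_A$ equal to a fixed $1/c$ — the content of (2) is precisely that $c = (i+1)/\beta_i = 2/b_j = n/\beta_{n-1}$ simultaneously, with $c\in\ZZ_{>0}$. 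A secondary, more routine point is to handle degenerate situations (e.g. $b_j=\beta_{k+1}$ for all $j\le k$, or $\beta_i=\beta_{i+1}$ for all $i$, or the last inequality being redundant because $\beta_{n-1}=d$) by observing that the corresponding facet simply drops out of $\mathcal A$, so no constraint is imposed — exactly as the quantifiers ``for any $k+1\le i\le n-2$ with $\beta_i<\beta_{i+1}$'' and ``for any $1\le j\le k$ with $b_j<\beta_{k+1}$'' in the statement encode.
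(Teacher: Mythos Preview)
Your overall strategy --- identify $K[B(\Pb)]$ with the Ehrhart ring of an integral polymatroid and apply \cite[Theorem~7.3]{HH} --- is the paper's. But you never actually carry out that identification, and this is precisely the step that resolves the ``index shifts and the factor of $2$'' you flag as the main obstacle. Your proposed explanations for those shifts (``the relevant normalization in the theorem pairs $\{j\}$ with its complement'', ``$\rho([n])=\rho([n-1])=\beta_{n-1}$'') are incorrect: in the SPLP setup $\rho([n])=d$ while only $\beta_{n-1}\le d$ is assumed, and no complement-pairing appears anywhere in \cite[Theorem~7.3]{HH}.

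The gap is this. The ring $K[B(\Pb)]$ is the \emph{base} ring, not the Ehrhart ring $K[\Pb]$, so \cite[Theorem~7.3]{HH} does not apply to the rank function of $\Pb$ on $[n]$ (that would compute Gorensteinness of the wrong ring). The paper's remedy is to project out the last coordinate: since $u_n=d-(u_1+\cdots+u_{n-1})$ is determined, one shows $K[B(\Pb)]\cong K[\Pb']$, where $\Pb'\subseteq\ZZ_+^{n-1}$ is the discrete polymatroid on the ground set $[n-1]$ cut out by $u_i\le b_i$ for $i\le k$ and $u_1+\cdots+u_i\le\beta_i$ for $k+1\le i\le n-1$. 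Now $K[\Pb']$ genuinely is the Ehrhart ring of an integral polymatroid, and \cite[Theorem~7.3]{HH} applies to its rank function $\rho'$ on $[n-1]$.

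Once this reduction is made, nothing needs to be adjusted by hand. One determines (via the irredundant decomposition) that the $\rho'$-closed $\rho'$-inseparable subsets of $[n-1]$ are exactly the singletons $\{j\}$ with $j\le k$ and $b_j<\beta_{k+1}$, the initial segments $[i]$ with $k+1\le i\le n-2$ and $\beta_i<\beta_{i+1}$, and $[n-1]$ itself. The criterion of \cite[Theorem~7.3]{HH} is that $(|A|+1)/\rho'(A)$ be a common positive integer over all such $A$; plugging in $|A|=1,\,i,\,n-1$ and $\rho'(A)=b_j,\,\beta_i,\,\beta_{n-1}$ yields $2/b_j$, $(i+1)/\beta_i$, $n/\beta_{n-1}$ on the nose. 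The ``$+1$'' is part of the theorem itself, and the ground set has already shrunk to $[n-1]$ --- that is the whole explanation.
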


\begin{proof}
Let $\Pb'$ be the set of integer  vectors $\ub=(u_1,\ldots,u_{n-1})\in \ZZ_+^{n-1}$ subject to
the first $(n-1)$ linear inequalities in the definition of a SPLP-polymatroid. Then $\Pb'$ is a discrete polymatroid on the ground set $[n-1]$. We claim that $K[B(\Pb)]\cong K[\Pb']$. In fact, $K[B(\Pb)]=K[\tb^{\ub}s^{d-|\ub|}\:\; \ub\in \Pb']$ and $K[\Pb']=K[\tb^{\ub}s\:\;\ub\in \Pb']$. Since for any coefficients   $k_{\ub}\in K$ with $\ub\in \Pb'$, $\sum_{\ub\in \Pb'}k_{\ub}(\ub, d-|\ub|)=0$ if and only if  $\sum_{\ub\in \Pb}k_{\ub}(\ub, 1)=0$, it follows that $K[B(\Pb)]$ and $K[\Pb']$ have the same relation lattices and so $K[B(\Pb)]\cong K[\Pb']$, as claimed.
\medskip

Let $\rho'$ be the ground set rank function of $\Pb'$. To determine the $\rho'$-closed and $\rho'$-inseparable subsets of $\Pb'$, we use the uniqueness of the irredudant decomposition  of $\Pb'$, see Equation (\ref{irr}).  First by the definition of $\Pb'$, we have the following decomposition:  \begin{eqnarray} \label{dec}\qquad \Pb'= (\bigcap_{i=1}^k\{\ub\in \ZZ^{n-1}_+\:\;u_i\leq b_i \})\cap (\bigcap_{i=k+1}^{n-1}\{\ub\in \ZZ_+^{n-1}\:\;u_1+\cdots+u_i\leq \beta_i\}). \end{eqnarray}

In general, if $A=\bigcap_{i\in I} A_i$, where $I$ is a finite index set, then $A_i$ is called {\em superfluous} in this decomposition if $A_i\supseteq \bigcap_{j\neq i}A_j$. We can  omit all superfluous terms step by step to achieve an irredundant decomposition of $A$.

\vspace{1mm}
We make the following observations:

\vspace{1mm}
(a) For $1\leq i\leq k$, the term $\{\ub\in \ZZ_+^{n-1}\:\; u_i\leq b_i\}$ is superfluous  in (\ref{dec})  if and only if $b_i\geq \beta_k$,

\vspace{1mm}
(b) For $k+1\leq i\leq n-2$, the term  $\{\ub\in \ZZ_+^{n-1}\:\; u_1+\cdots+u_i\leq \beta_i\}$ is superfluous  in (\ref{dec}) if and only if $\beta_i= \beta_{i+1}$,

\vspace{1mm}

(c) the term $\{\ub\in \ZZ_+^{n-1}\:\; u_1+\cdots+u_{n-1}\leq \beta_{n-1}\}$ is not superfluous in (\ref{dec}).
\vspace{1mm}

We only prove (b) since the proofs of the other facts are similar.

\vspace{2mm}
If $\beta_i=\beta_{i+1}$, then the term $\{\ub\in \ZZ_+^{n-1}\:\; u_1+\cdots+u_i\leq \beta_i\}$ is superfluous since it contains $\{\ub\in \ZZ_+^{n-1}\:\; u_1+\cdots+u_{i+1}\leq \beta_{i+1}\}$.
\vspace{2mm}

If $\beta_i<\beta_{i+1}$, we define  $\ub\in\ZZ_+^{n-1}$ by $u_i=\beta_{i+1}$ and $u_j=0$ for $j\neq i$. Then $\ub$ does not belong to $\Pb'$ since $u_1+\cdots+u_i>\beta_i$, but it belongs to the decomposition obtained from (\ref{dec}) by dropping  the term $\{\ub\in \ZZ^n\:\; u_1+\cdots+u_i\leq \beta_i\}$. Hence $\{\ub\in \ZZ^n\:\; u_1+\cdots+u_i\leq \beta_i\}$ is not superfluous. This proves (b).

\vspace{2mm}
Note that if two terms are superfluous in (\ref{dec}), then one  term is still superfluous in the decomposition obtained from (\ref{dec}) by dropping the other term. (This is not true for an arbitrary decomposition $A=\bigcap_{i\in I} A_i$). Hence we achieve an irredundant decomposition of $\Pb'$ by dropping all superfluous terms in (\ref{dec}). It follows from Equation (\ref{irr}) that  there are three classes of $\rho'$-closed and $\rho'$-inseparable subsets of $\Pb'$:

 (a) the subsets  $\{i\}$, where $1\leq i\leq k$ and $b_i<\beta_k$,

  (b) the subsets $[i]$, where $k+1\leq i\leq n-2$ and $\beta_i<\beta_{i+1}$,

   (c) $[n-1]$. Now a direct application of
\cite[Theorem 7.3]{HH}  yields  our result.
\end{proof}

\section{Linear quotients for PLP-polymatroidal ideals}

From this section on, we will turn to investigate the algebraic properties of  polymatriodal ideals of some classes of  PLP-polymatoids.

\medskip
 In view of  \cite[Theorem 12.6.2 and Theorem 12.7.2]{HHBook} and their proofs, we see that a polymatroidal ideal $I$ has linear quotients if its minimal generators are arranged in either the lexicographical
order or the reverse  lexicographical
order. In this section we will show that if $I$ is a PLP-polymatroidal ideal, then its linear  quotients are more easily trackable. We will use this result repeatedly  in the following sections.

\begin{Remark} \em
 Let $I$ be a PLP-polymatroidal ideal of type  $(\ab,\bb|{\bm \alpha},{\bm \beta})$. Then $I$ is isomorphic to a PLP-polymatroidal ideal of type $(\mathbf{0},\bb_1|{\bm \alpha_1},{\bm \beta_1})$ for suitable vectors $\bb_1,{\bm \alpha_1}$ and ${\bm \beta_1}$. In fact, let $J$ be the ideal generated by monomials $x_1^{u_1}\cdots x_n^{u_n}$ satisfying
$$0 \leq u_i\leq b_i-a_i, \forall i=1,\ldots,n$$
and $$\alpha_{i}-\sum_{j=1}^i a_i \leq u_1+\cdots+u_i\leq \beta_{i}-\sum_{j=1}^i a_i, \forall i=1,\ldots,n. $$
Then,  since $I=x_1^{a_1}\cdots x_n^{a_n}J$, the ideals $I$ and $J$ are isomorphic as modules and hence $I$ and $J$ have the same projective dimension and the same depth. In what follows we always assume that $I$ is a PLP-polymatroidal ideal of type $(\mathbf{0},\bb|{\bm \alpha},{\bm \beta})$.
\end{Remark}

  \begin{Lemma}\label{quo} Let $I$ be a PLP-polymatroidal ideal of type $(\mathbf{0},\bb|{\bm \alpha},{\bm \beta})$. Let $G(I)=\{m_1,\ldots,m_r\}$ be the set of minimal generators of $I$ such that $m_1>m_2>\cdots>m_r$ with respect to the lexicographical order. Fix $1\leq q\leq r$ and let $J=(m_1,\ldots,m_{q-1})$.  Write  $m_q=x_1^{t_1}\cdots x_n^{t_n}$. Then

\vspace{2mm}
 { \em{(a)}} $x_n$ is not in $J:m_q$;

\vspace{2mm}
  {\em{(b)}} for any $1\leq i\leq n-1$,   $x_i\in J:m_q$ if and only if $t_i< b_i$ and  $t_1+\cdots+t_{i}<\beta_i$.
  \end{Lemma}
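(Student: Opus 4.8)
The statement concerns linear quotients of a PLP-polymatroidal ideal $I$ of type $(\mathbf{0},\bb|{\bm\alpha},{\bm\beta})$ with generators $m_1>\cdots>m_r$ in lexicographic order; writing $m_q=x_1^{t_1}\cdots x_n^{t_n}$ and $J=(m_1,\dots,m_{q-1})$, we must (a) show $x_n\notin J:m_q$, and (b) characterize when $x_i\in J:m_q$ for $i\le n-1$. The plan is to unwind the definition of colon ideals: $x_i\in J:m_q$ iff $x_i m_q$ is divisible by some $m_p$ with $p<q$, i.e. iff there is a generator $m_p$ lexicographically larger than $m_q$ with $m_p\mid x_i m_q$. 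Since $m_p$ and $m_q$ have the same degree $d$, the divisibility $m_p\mid x_i m_q$ forces $m_p=x_j^{-1}x_i m_q$ for some $j$ (as exponent vectors, $m_p$ has one more in slot $i$ and one less in some slot $j\ne i$, unless $m_p=m_q$ which is excluded), i.e. $m_p$ corresponds to the base $\ub_q+{\bm\epsilon}_i-{\bm\epsilon}_j$ where $\ub_q$ is the exponent vector of $m_q$. For this to be lexicographically larger than $m_q$ we need $j>i$.

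\textbf{Part (a).} For $x_n\in J:m_q$ we would need $m_p=x_j^{-1}x_n m_q\in G(I)$ with $j>n$, which is impossible. Hence $x_n\notin J:m_q$. This is immediate once the structure of the colon is made explicit.

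\textbf{Part (b), the ``only if'' direction.} Suppose $x_i\in J:m_q$ for some $i\le n-1$. Then there exists $j>i$ with $\ub_p:=\ub_q+{\bm\epsilon}_i-{\bm\epsilon}_j\in B$. From $\ub_p\ge 0$ and the defining inequalities $0\le (\ub_p)_i=t_i+1\le b_i$ we get $t_i<b_i$. From the partial-sum inequalities applied to $\ub_p$ at index $i$: since $j>i$, we have $(\ub_p)_1+\cdots+(\ub_p)_i=t_1+\cdots+t_i+1\le\beta_i$, hence $t_1+\cdots+t_i<\beta_i$. This gives the two asserted conditions.

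\textbf{Part (b), the ``if'' direction.} This is where the main work lies. Assume $t_i<b_i$ and $t_1+\cdots+t_i<\beta_i$; we must produce $j>i$ with $\ub_q+{\bm\epsilon}_i-{\bm\epsilon}_j\in B$, equivalently (since $m_q$ is not the lexicographically largest generator compatible with these moves) we must show such a $j$ can be chosen. The natural approach: because $\ub_q$ is a base and $t_i<b_i$, the vector $\ub_q+{\bm\epsilon}_i$ lies in the polymatroid $\Pb$ (it satisfies $u_i\le b_i$; the upper partial sums only increase but we will correct this), but has modulus $d+1$, so by the exchange/base property there is $j$ with $(\ub_q+{\bm\epsilon}_i)_j>0$ such that $\ub_q+{\bm\epsilon}_i-{\bm\epsilon}_j\in B$. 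The key point to verify is that $j$ can be taken $>i$: I would argue directly using the explicit inequalities. Consider the largest index $j$ with $t_j>\alpha_j-\alpha_{j-1}$-type slack, or more concretely: among indices $\ell>i$ pick the smallest $\ell$ for which decreasing slot $\ell$ by one keeps all partial sums $\ge$ the $\alpha$-bounds; such $\ell$ exists because $t_1+\cdots+t_i<\beta_i$ ensures there is ``room'' created by adding ${\bm\epsilon}_i$, and the total $\sum t_k=d=\sum\alpha$-ceiling forces some coordinate beyond $i$ to carry removable mass. One then checks all four families of inequalities (lower/upper bounds on coordinates, lower/upper bounds on partial sums) for $\ub_q+{\bm\epsilon}_i-{\bm\epsilon}_j$. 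The main obstacle is precisely this last verification: ensuring the index $j$ can be located strictly to the right of $i$ while simultaneously not violating any $\alpha_k\le u_1+\cdots+u_k$ lower bound for $k$ between $i$ and $j$, and not violating $u_j\ge 0$. I expect the cleanest argument chooses $j$ to be the largest index $>i$ with $t_j>0$ and with $t_1+\cdots+t_k\ge\alpha_k+1$ for all $i\le k\le j-1$; a short combinatorial argument (using that the partial sums of $\ub_q$ at indices $\ge i$ strictly exceed the corresponding $\alpha$'s once we have added ${\bm\epsilon}_i$, except possibly at the very end where the total is pinned) shows such $j$ exists and does the job.
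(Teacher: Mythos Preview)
Your treatment of part (a) and of the ``only if'' direction of (b) is correct and essentially identical to the paper's: once you observe that any $m_p\mid x_i m_q$ with $m_p\ne m_q$ must have the form $m_p=x_i m_q/x_j$, and that $m_p>m_q$ in lex forces $j>i$, both conclusions follow immediately.

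The gap is in the ``if'' direction of (b). When you replace $\ub_q$ by $\ub_q+{\bm\epsilon}_i-{\bm\epsilon}_j$ with $j>i$, the lower bounds $\alpha_k\le u_1+\cdots+u_k$ are \emph{automatically} preserved: for $k<i$ or $k\ge j$ the partial sum is unchanged, and for $i\le k<j$ it increases by $1$. So the conditions you impose on $j$ (``$t_1+\cdots+t_k\ge\alpha_k+1$ for $i\le k\le j-1$'') are aimed at a constraint that is never in danger. The constraint that actually can fail is the \emph{upper} bound $u_1+\cdots+u_k\le\beta_k$ for $i\le k<j$, and your proposed choice of $j$ says nothing about it. Concretely, with $n=3$, ${\bm\alpha}=(0,0,2)$, ${\bm\beta}=(1,1,2)$, $\bb=(2,2,2)$ and $m_q=x_3^2$, $i=1$, your condition ``$t_1+\cdots+t_k\ge\alpha_k+1$'' fails for every candidate $j$, so your $j$ does not exist---yet $(1,0,1)$ is a base, so $x_1\in J:m_q$.

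The paper's choice fixes exactly this: take $j$ to be the \emph{smallest} index $k>i$ with $t_1+\cdots+t_k=\beta_k$ (such $k$ exists since $t_1+\cdots+t_n=d=\beta_n$). Minimality of $j$ gives $t_1+\cdots+t_\ell<\beta_\ell$ for every $i\le\ell<j$, so after adding ${\bm\epsilon}_i$ the upper bounds still hold; and from $t_1+\cdots+t_{j-1}<\beta_{j-1}\le\beta_j=t_1+\cdots+t_j$ one gets $t_j>\beta_j-\beta_{j-1}\ge 0$, so $t_j-1\ge 0$. All remaining inequalities are immediate. Reorient your argument toward the $\beta$-bounds and this choice of $j$, and the proof goes through cleanly.
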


\begin{proof}  If $x_i\in J:m_q$, then there is $l<q$ with $x_i=m_l/[m_l,m_q]$. Here $[u,v]$ denotes the greatest  common divisor of  of the monomials $u$ and $v$. Since $m_l>m_q$, we have $m_l=x_1^{t_1}\cdots x_i^{t_i+1}\cdots x_j^{t_j-1}\cdots  x_n^{t_n}$ for some  $j$ with $j>i$.   Hence $i<n$ and  $x_n\notin J:m_q$.  This proves (a).

\vspace{2mm}
Proof of (b): Let  $i<n$  and  $x_i\in J:m_q$. Then as in the proof of  (a) there exists $m_l\in G(I)$ with $m_l=x_1^{t_1}\cdots x_i^{t_i+1}\cdots x_j^{t_j-1}\cdots  x_n^{t_n}$ for some  $j$ with $j>i$.  It follows  that $t_i< t_i+1\leq b_i$ and $t_1+\cdots+t_i<t_1+\cdots+t_i+1\leq \beta_i$.

Conversely let $i<n$ and assume that $t_i<  b_i$ and $t_1+\cdots+t_i< \beta_i$. Let $j$ be the smallest integer $k$ with the property that  $k>i$ and $t_1+\cdots+t_k=\beta_k$. Note that such a $k$ exists since $t_1+\cdots+t_n=\beta_n$. By the choice of $j$,  we have $t_j>\beta_j-\beta_{j-1}\geq 0$. Set $m=x_1^{t_1}\cdots x_i^{t_i+1}\cdots x_j^{t_j-1}\cdots  x_n^{t_n}$. Then  $m$ belongs to $G(I)$ with $m>m_q$ and
$m/[m,m_q]=x_i$. Hence $x_i\in J:m_q$, and this completes the proof of $(b)$.
\end{proof}

\medskip
For any $\xb^{\ub}=x_1^{u_1}\cdots x_n^{u_n}\in G(I)$, we set
\begin{eqnarray}
\label{nu}
N(\ub)= |\{1\leq i\leq n-1\:\;u_i<b_i,u_1+\cdots+u_i<\beta_i\}|.
\end{eqnarray}

As an immediate  consequence of Lemma~\ref{quo} together with \cite[Corollary 8.2.2]{HHBook}, we obtain:

\begin{Proposition} \label{nu1}  Let $I$ be a PLP-polymatroidal ideal of type $(\mathbf{0},\bb|{\bm \alpha},{\bm \beta})$. Then $$\depth S/I=n-1-\max\{N(\ub)\:\; \xb^{\ub}\in G(I)\}.$$

\end{Proposition}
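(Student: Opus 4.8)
The plan is to invoke the standard formula relating depth to the number of variables appearing in the successive linear quotients. Concretely, by \cite[Corollary 8.2.2]{HHBook} (or the Herzog--Takayama / Herzog--Hibi description of resolutions of ideals with linear quotients), if $I$ has linear quotients with respect to the ordering $m_1>m_2>\cdots>m_r$ of $G(I)$, and if for each $q$ we set $r_q=|\{\,x_i : x_i\in (m_1,\dots,m_{q-1}):m_q\,\}|$, then $I$ has a (not necessarily minimal, but exact) free resolution built from the subsets of the $r_q$ variables in each step, and in particular
\[
\pd_S(S/I)=1+\max_{1\le q\le r} r_q .
\]
So the entire statement reduces to computing $\max_q r_q$ for the lexicographic order on $G(I)$, and then applying the Auslander--Buchsbaum formula $\depth S/I=n-\pd_S(S/I)$.

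The computation of $r_q$ is exactly what Lemma~\ref{quo} provides. Fix $q$, write $m_q=\xb^{\ub}$ with $\ub=(t_1,\dots,t_n)=(u_1,\dots,u_n)$, and let $J=(m_1,\dots,m_{q-1})$. Part (a) of Lemma~\ref{quo} says $x_n\notin J:m_q$, and part (b) says that for $i\le n-1$ we have $x_i\in J:m_q$ if and only if $t_i<b_i$ and $t_1+\cdots+t_i<\beta_i$. Hence
\[
r_q=|\{\,1\le i\le n-1 : u_i<b_i,\ u_1+\cdots+u_i<\beta_i\,\}| = N(\ub),
\]
which is precisely the quantity defined in (\ref{nu}). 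Therefore $\max_q r_q=\max\{N(\ub): \xb^{\ub}\in G(I)\}$, and combining with the two displayed formulas above gives
\[
\depth S/I = n-\pd_S(S/I)=n-1-\max\{N(\ub):\xb^{\ub}\in G(I)\},
\]
as claimed.

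The only point that needs a word of care — and the place I expect the main (minor) obstacle — is the passage from ``$\pd=1+\max r_q$'' being an \emph{upper bound} coming from the iterated mapping-cone resolution, to it being the \emph{exact} value. For polymatroidal ideals this is fine: by \cite[Theorem 12.6.2]{HHBook} a polymatroidal ideal has linear quotients (in the lex order, by the remark preceding Lemma~\ref{quo}), and an ideal with linear quotients generated in a single degree has a linear resolution, so the mapping-cone resolution is in fact minimal; hence the bound is an equality. One should also note that $b_i$ and $\beta_i$ in Lemma~\ref{quo} refer to the normalized type $(\mathbf{0},\bb|{\bm\alpha},{\bm\beta})$, which is harmless since by the Remark we have reduced to this case at the cost of an isomorphism of modules preserving depth. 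With these remarks in place the proof is just the chain of equalities above.
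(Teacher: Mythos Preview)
Your proof is correct and follows exactly the paper's approach: the paper states the result as ``an immediate consequence of Lemma~\ref{quo} together with \cite[Corollary 8.2.2]{HHBook}'', which is precisely your argument of computing $r_q=N(\ub)$ via Lemma~\ref{quo} and then invoking the linear-quotients projective-dimension formula plus Auslander--Buchsbaum. Your extra paragraph justifying that $\pd=1+\max r_q$ is an equality (via minimality of the mapping cone for ideals with linear quotients generated in one degree) is a reasonable precaution, though \cite[Corollary 8.2.2]{HHBook} already asserts this as an equality for any ideal with linear quotients, so it is not strictly needed.
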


\medskip
In the rest part of this section we will decide the associated prime ideals   of  LP-polymatroidal ideals. we begin with:

\begin{Proposition} \label{simple}
Let $I$ be a LP-polymatroidal ideal of type $({\bm \alpha},{\bm \beta})$. Then the following statements are equivalent:
\begin{enumerate}
\item[ (a)]  $\mathrm{depth}\ S/I=0$;

\item[(b)]  $\mathrm{m}\in \mathrm{Ass}(S/I)$;

\item[(c)] $\alpha_i<\beta_i$ for $i=1,\ldots,n-1$.
\end{enumerate}Here $\mathrm{m}$ denotes the maximal homogeneous ideal $(x_1,\ldots,x_n)$ of $S=K[x_1,\ldots,x_n]$
\end{Proposition}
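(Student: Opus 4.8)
The plan is to derive the whole chain of equivalences from the depth formula in Proposition~\ref{nu1}. First I would record $\mathrm{(a)}\iff\mathrm{(b)}$, which holds for any finitely generated graded $S$-module $M$ and has nothing to do with the particular shape of $I$: $\depth M=0$ precisely when $M$ has a nonzero socle element $\bar f$, and then $\mm=(0:_S f)\in\Ass(M)$; conversely $\mm\in\Ass(M)$ forces a socle element and hence $\depth M=0$. Applied to $M=S/I$ this is immediate.

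Next I would settle the substantive equivalence $\mathrm{(a)}\iff\mathrm{(c)}$. Since $I$ is an LP-polymatroidal ideal, I would present it (as is permitted) as a PLP-polymatroidal ideal of type $(\mathbf{0},\bb|{\bm\alpha},{\bm\beta})$ with all $b_i>d$; then for every base $\ub$ and every $i\leq n-1$ one has $u_i\leq u_1+\cdots+u_i\leq\beta_i\leq d<b_i$, so the clause ``$u_i<b_i$'' appearing in (\ref{nu}) is automatically satisfied and
$$N(\ub)=|\{1\leq i\leq n-1\:\;u_1+\cdots+u_i<\beta_i\}|.$$
By Proposition~\ref{nu1} this means $\depth S/I=0$ if and only if there is a base $\ub$ (equivalently $\xb^{\ub}\in G(I)$) with $u_1+\cdots+u_i<\beta_i$ for \emph{every} $i=1,\ldots,n-1$. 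From such a $\ub$ one reads off $\alpha_i\leq u_1+\cdots+u_i\leq\beta_i-1$, hence $\alpha_i<\beta_i$ for all $i\leq n-1$; this gives $\mathrm{(a)}\Rightarrow\mathrm{(c)}$.

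For $\mathrm{(c)}\Rightarrow\mathrm{(a)}$ I would build the required base by a greedy choice of partial sums: set $s_0=0$, $s_i=\max\{s_{i-1},\alpha_i\}$ for $1\leq i\leq n-1$, and $s_n=d$. Because ${\bm\alpha}$ and ${\bm\beta}$ are nondecreasing and $\alpha_i\leq\beta_i-1$ by assumption, a short induction shows $\alpha_i\leq s_i\leq\beta_i-1$ for $1\leq i\leq n-1$; since then $s_{n-1}\leq\beta_{n-1}-1\leq d-1<d=s_n$ and the sequence $s_0\leq s_1\leq\cdots\leq s_n$ is nondecreasing, the vector $\ub$ with $u_i=s_i-s_{i-1}$ lies in $\ZZ_+^n$ and satisfies $\alpha_i\leq u_1+\cdots+u_i\leq\beta_i$ for all $i$ (trivially for $i=n$). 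Thus $\ub$ is a base of the polymatroid with $N(\ub)=n-1$, and Proposition~\ref{nu1} yields $\depth S/I=0$. I do not expect a real obstacle; the only two places that need care are the reduction to $b_i>d$, which is what makes the ``$u_i<b_i$'' part of (\ref{nu}) disappear, and the induction verifying that the greedy partial sums stay inside the windows $[\alpha_i,\beta_i-1]$ and finish strictly below $d$, so that $\ub$ is genuinely a base and not merely a vector of total degree $d$.
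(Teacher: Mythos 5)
Your proposal is correct and follows essentially the same route as the paper: both parts hinge on the depth formula of Proposition~\ref{nu1}, and both reduce $N(\ub)$ to the count of strict inequalities $u_1+\cdots+u_i<\beta_i$ before exhibiting a base achieving $N(\ub)=n-1$. The only cosmetic difference is in the direction $\mathrm{(c)}\Rightarrow\mathrm{(a)}$: the paper simply takes the base $\vb=(\alpha_1,\alpha_2-\alpha_1,\ldots,\alpha_n-\alpha_{n-1})$, whereas you run a greedy recursion $s_i=\max\{s_{i-1},\alpha_i\}$; since $\bm\alpha$ is nondecreasing by definition, $s_i=\alpha_i$ throughout and your construction collapses to the paper's. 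Your remark about choosing $b_i>d$ to suppress the ``$u_i<b_i$'' clause in $N(\ub)$ is a clean way to justify the simplification the paper states without comment.
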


\begin{proof} The equivalence between  (a) and (b) is well-known and it holds for all monomial ideals.

 (a)$\Leftrightarrow$  (c): Note that in this special case, $$N(\ub)=|\{1\leq i\leq n-1\:\; u_1+\cdots+u_i<\beta_i\}|$$ for any $\ub$ with $\xb^{\ub}\in G(I).$ Let $B$ denote the set of bases of our discrete polymatroid.

 If $\alpha_i<\beta_i$ for $i=1,\ldots,n-1$, then $\vb:=(\alpha_1,\alpha_2-\alpha_1,\ldots,\alpha_n-\alpha_{n-1})$ belongs to $B$ and $N(\vb)=n-1$.   Hence $\mathrm{depth}\ S/I=0$ by Proposition~\ref{nu1}. Conversely, if  $\mathrm{depth}\ S/I=0$, then there exists $\ub\in B$ such that $N(\ub)=n-1$ by Proposition~\ref{nu1} again. This implies $u_1+\cdots+u_i<\beta_i$ for $i=1,\ldots,n-1$. Since $\alpha_i\leq u_1+\cdots+u_i$  for $i=1,\ldots,n-1$ by definition, the result follows.\end{proof}

\begin{Lemma} \label{form} Let $I$ be a LP-polymatroidal ideal. Then every associated prime ideal of $I$ is of the form $P_{[s,t]}$ for some $s\leq t$.
\end{Lemma}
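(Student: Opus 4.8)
\emph{Plan.} The strategy is a localization argument of the kind announced in the introduction: one reduces the computation of $\Ass(S/I)$ to a depth-zero question for a product of \emph{interval} prime ideals on a smaller variable set, and then observes that such a localization cannot have its maximal ideal as an associated prime unless that variable set is an interval.

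First I would invoke Lemma~\ref{trans1} to write $I=P_{[s_1,t_1]}\cdots P_{[s_d,t_d]}$ with $s_1\le\cdots\le s_d$, $t_1\le\cdots\le t_d$ and $s_i\le t_i$. Since $I$ is a monomial ideal, any $P\in\Ass(S/I)$ equals $P_A$ for some nonempty $A\subseteq[n]$. By the standard behaviour of monomial ideals under localization at $P_A$ (equivalently, setting $x_j=1$ for $j\notin A$), one has $P_A\in\Ass(S/I)$ if and only if $\mathfrak m_A:=(x_i\:\;i\in A)\in\Ass(\bar S/\bar I)$, where $\bar S=K[x_i\:\;i\in A]$ and $\bar I=\prod_{j\in J}P_{[s_j,t_j]}\subseteq\bar S$ with $J=\{j\in[d]\:\;[s_j,t_j]\subseteq A\}$; the factors $P_{[s_j,t_j]}$ with $[s_j,t_j]\not\subseteq A$ specialize to the unit ideal and disappear from the product. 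In particular $J\neq\emptyset$, and since an associated prime contains no nonzerodivisor, every $x_i$ with $i\in A$ must divide a minimal generator of $\bar I$; as the variables occurring in $\bar I=\prod_{j\in J}P_{[s_j,t_j]}$ are exactly those in $\bigcup_{j\in J}[s_j,t_j]\subseteq A$, this forces $A=\bigcup_{j\in J}[s_j,t_j]$.

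Then I would argue by contradiction: suppose $A$ is not an interval and write $A=A_1\sqcup\cdots\sqcup A_m$ ($m\ge 2$) as its partition into maximal consecutive blocks. Each interval $[s_j,t_j]$ with $j\in J$ lies inside a single block $A_p$, so $\bar I=I_1\cdots I_m$ with $I_p=\prod\{P_{[s_j,t_j]}\:\;[s_j,t_j]\subseteq A_p\}$; moreover $A=\bigcup_{j\in J}[s_j,t_j]$ forces each $I_p$ to be a \emph{proper} ideal. Since $I_1,\dots,I_m$ involve pairwise disjoint sets of variables, $\bar I=I_1\cdots I_m=I_1\cap\cdots\cap I_m$, whence $\Ass(\bar S/\bar I)\subseteq\bigcup_p\Ass(\bar S/I_p)$. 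Every associated prime of $I_p$ is generated by variables occurring in $I_p$, i.e.\ is of the form $(x_i\:\;i\in C)$ with $C\subseteq A_p\subsetneq A$, so none of them equals $\mathfrak m_A$. This contradicts $\mathfrak m_A\in\Ass(\bar S/\bar I)$, so $A$ must be an interval and $P=P_{[\min A,\max A]}$.

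The steps that are merely routine — the localization criterion for associated primes of monomial ideals, the identity ``product $=$ intersection'' for ideals in disjoint sets of variables, and the fact that associated primes of a monomial ideal are generated by variables occurring in its generators — I would only recall briefly. The point that deserves care is the passage from $\bar I=\prod_{j\in J}P_{[s_j,t_j]}$ to the intersection decomposition $\bar I=\bigcap_p I_p$: this relies precisely on each surviving factor being an \emph{interval}, which is exactly what the lattice-path hypothesis supplies through Lemma~\ref{trans1}, and I expect this to be the main thing to phrase correctly.
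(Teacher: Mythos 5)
Your argument is correct, but it takes a genuinely different route from the paper's. The paper proves Lemma~\ref{form} in one line by combining Lemma~\ref{trans1} with the cited result \cite[Theorem~3.7]{HHV}, which describes the associated primes of a transversal polymatroidal ideal $\prod_j P_{F_j}$ as generated by certain unions of the $F_j$; since here each $F_j=[s_j,t_j]$ is an interval and the relevant unions are connected, they are again intervals and the lemma follows. You instead re-derive the needed fragment of that structure theorem from scratch: you localize via $P_A\in\Ass(S/I)\iff \mathfrak m_A\in\Ass(S(A)/I(A))$ (which is the paper's Proposition~\ref{basic}(a)), observe that exactly the interval factors contained in $A$ survive in $I(A)$, pin down $A$ as the union of the surviving intervals by the zerodivisor criterion (every variable in an associated prime must divide a minimal generator; conversely the surviving intervals lie in $A$ by definition of $J$), and then split a disconnected $A$ into maximal blocks to write $I(A)$ as a product, hence an intersection, of proper ideals in pairwise disjoint variable sets. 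That such an intersection cannot have the full maximal ideal among its associated primes is exactly the content of the paper's Lemma~\ref{H}(b), which you could have cited to shorten this last step. The trade-off is plain: the paper's proof is shorter but opaque without consulting \cite{HHV}, while yours is self-contained and uses only machinery already present in the paper (monomial localization and the disjoint-support decomposition). Yours is also marginally more general: once the factorization into interval primes is in hand, you never use the nested ordering $s_1\le\cdots\le s_d$, $t_1\le\cdots\le t_d$ from Lemma~\ref{trans1}, so your argument applies to any product of interval monomial primes, not only to the LP ones.
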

\begin{proof} The assertion follows from Lemma~\ref{trans1} together with \cite[Theorem 3.7]{HHV}.
\end{proof}

 Following \cite{HRV}, we denote by $S(P)$ the polynomial ring in the variables belonging to $P$, and $I(P)$ the {\it monomial localization} of $I$ at $P$. Recall that $I(P)$ is the monomial ideal of $S(P)$ obtained from $I$ as the image of the map $\phi: S\rightarrow S(P)$ defined by $\phi(x_i)=x_i$ if $x_i\in P$ and $\phi(x_i)=1$, otherwise. Let $A\subseteq [n]$. We denote $I(P_{A})$ by $I(A)$ and $S(P_A)$ by $S(A)$ for short. One can check that if $I,J$ are monomial ideals then $(IJ)(A)=I(A)J(A)$, and if $A\subseteq B$ then $I(A)= I(B)(A)$. In the  later proofs  we need the following  facts.

\begin{Proposition} \label{basic} {\em{(a)}} Let $I$ be a monomial ideal. Then $P\in \mathrm{Ass}(S/I)$ if and only if $\mathrm{depth}\ S(P)/I(P)=0$.

{\em{(b)}} Let $I$ be a polymatroidal ideal and fix $i\in [n]$. Set $$a_i=\max\{u_i\:\; \xb^{\ub}\in G(I)\}.$$ Then $I([n]\setminus \{i\})$ is again a polymatroidal ideal. Moreover it is  generated by monomials $\xb^{\ub}/x^{a_i}$, where  $\xb^{\ub}\in G(I)$ with $u_i=a_i$.

\end{Proposition}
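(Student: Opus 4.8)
I would handle the two parts separately. Part (a) is a standard property of monomial localization; I would derive it from the characterization ``$\depth N=0\iff\mathfrak m\in\Ass N$'' for a finitely generated graded module $N$ over a polynomial ring with graded maximal ideal $\mathfrak m$, together with the fact that the associated primes of a monomial ideal are monomial primes $P_A$. Part (b) rests on the symmetric exchange property of polymatroidal ideals and one observation about how the localization map behaves on a single exchange step.

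\emph{Part (a).} Writing $P=P_A$, it suffices by the depth-zero characterization to prove $P_A\in\Ass(S/I)\iff\mathfrak m_A\in\Ass(S(A)/I(A))$, where $\mathfrak m_A=(x_j:j\in A)$ is the graded maximal ideal of $S(A)$. For $\Rightarrow$: pick a monomial $u$ with $I:u=P_A$ and let $\phi$ be the localization map onto $S(A)$; applying $\phi$ to the relations $x_j u\in I$ ($j\in A$) gives $\mathfrak m_A\subseteq I(A):\phi(u)$, while $\phi(u)\notin I(A)$ because if some $m\in G(I)$ satisfied $\phi(m)\mid\phi(u)$ then, writing $m=m'm''$ with $\supp(m'')\cap A=\emptyset$, one would get $m\mid u\,m''$, i.e. $m''\in I:u=P_A$, which is impossible; hence $I(A):\phi(u)=\mathfrak m_A$. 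For $\Leftarrow$: take a monomial $v$ in $S(A)$ with $I(A):v=\mathfrak m_A$ and set $u=v\prod_{j\notin A}x_j^c$ with $c$ large enough that the finitely many generators of $I$ witnessing $x_i v\in I(A)$ ($i\in A$) actually divide $x_i u$ in $S$; then $P_A\subseteq I:u$, and no monomial supported off $A$ can lie in $I:u$ (that would force $\phi$ of some generator of $I$ to divide $v$, i.e. $v\in I(A)$), so $I:u=P_A$. Alternatively, the localization isomorphism $S_{P_A}/IS_{P_A}\cong\bigl((S(A)/I(A))\otimes_K K(x_j:j\notin A)\bigr)_{\mathfrak m_A}$, together with invariance of associated primes under localization and of depth under faithfully flat base change, yields the equivalence at once.

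\emph{Part (b).} Let $\phi\colon S\to S([n]\setminus\{i\})$ be the localization map, so $J:=I([n]\setminus\{i\})$ is generated by the monomials $\phi(\xb^{\ub})=\xb^{\ub}/x_i^{u_i}$, $\xb^{\ub}\in G(I)$. If $\xb^{\ub}\in G(I)$ has $u_i<a_i$, fix $\xb^{\wb}\in G(I)$ with $w_i=a_i$; the symmetric exchange property at the index $i$ (where $u_i<w_i$) gives $j\ne i$ with $u_j>w_j$ and $\xb^{\ub+{\bm \epsilon}_i-{\bm \epsilon}_j}\in G(I)$, and applying $\phi$ to the monomial identity $\xb^{\ub+{\bm \epsilon}_i-{\bm \epsilon}_j}x_j=\xb^{\ub}x_i$ yields $\phi(\xb^{\ub+{\bm \epsilon}_i-{\bm \epsilon}_j})\,x_j=\phi(\xb^{\ub})$, so $\phi(\xb^{\ub+{\bm \epsilon}_i-{\bm \epsilon}_j})\mid\phi(\xb^{\ub})$. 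Iterating $a_i-u_i$ times produces $\xb^{\ub'}\in G(I)$ with $u'_i=a_i$ and $\phi(\xb^{\ub'})\mid\phi(\xb^{\ub})$; hence $J$ is generated by $\{\xb^{\ub}/x_i^{a_i}:\xb^{\ub}\in G(I),\ u_i=a_i\}$, and these are pairwise incomparable (for such $\ub,\ub'$, $\xb^{\ub}/x_i^{a_i}\mid\xb^{\ub'}/x_i^{a_i}$ forces $\xb^{\ub}\mid\xb^{\ub'}$, hence $\ub=\ub'$), so they are exactly $G(J)$. Finally, since a monomial ideal generated in a single degree that satisfies the exchange property is polymatroidal, it remains to verify the exchange property for $G(J)$: given $\xb^{\vb},\xb^{\vb'}\in G(J)$ with $v_k>v'_k$, lift them to $\xb^{\ub},\xb^{\ub'}\in G(I)$ with $u_i=u'_i=a_i$ agreeing with $\vb,\vb'$ off the coordinate $i$; then $u_k>u'_k$, the exchange property of $G(I)$ gives $l$ with $u_l<u'_l$ and $\xb^{\ub-{\bm \epsilon}_k+{\bm \epsilon}_l}\in G(I)$, and necessarily $l\ne i$ since $u_i=u'_i$, so its restriction off coordinate $i$ is the required element of $G(J)$.

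\emph{Main obstacle.} The only delicate point is part (a): making rigorous the passage of colon ideals and witness monomials between $S$ and $S(A)$ — in the combinatorial route, choosing the padding exponent uniformly and then confirming that no extra monomial generator slips into $I:u$, so that it equals $P_A$ exactly rather than merely containing it; in the localization route, invoking invariance of depth under flat base change. Part (b) is essentially mechanical once the identity $\xb^{\ub+{\bm \epsilon}_i-{\bm \epsilon}_j}x_j=\xb^{\ub}x_i$ and its image under $\phi$ are at hand.
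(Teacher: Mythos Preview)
Your proof is correct. The paper itself does not prove this proposition at all: it simply cites \cite[Lemma 2.11]{FHT} for part (a) and \cite[Proposition 2.1]{HRV} for part (b). What you have written is essentially the standard self-contained argument behind those citations---in (a), translating the colon-ideal witness for $P_A$ through the monomial localization map $\phi$ (or, equivalently, using the flat base change $S_{P_A}\cong S(A)_{\mathfrak m_A}\otimes_K K(x_j:j\notin A)$), and in (b), repeatedly applying the symmetric exchange property to push the $i$th coordinate up to $a_i$ while tracking divisibility under $\phi$. Both parts are carried out cleanly; the only place to be slightly careful is the $\Leftarrow$ direction of (a), where you correctly observe that any monomial $w\notin P_A$ has $\phi(w)=1$, so $w\in I:u$ would force $v=\phi(u)\in I(A)$, and hence $I:u=P_A$ exactly. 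So you have supplied a genuine proof where the paper only gives pointers to the literature.
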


\begin{proof} (a) This has been observed in \cite[Lemma 2.11]{FHT}.

(b) This can be seen  from \cite[Proposition 2.1]{HRV} and its proof.
\end{proof}

Let $I$ be a LP-polymatroidal ideal of type $({\bm \alpha},{\bm \beta})$. By  Proposition~\ref{basic}(b) we have
$I([s,n])$ is generated by $\xb^{\ub}$ with $\ub$ satisfying:
 $$\max\{\alpha_{i}-\beta_{s-1},0\}\leq u_s+\cdots+u_i\leq \beta_{i}-\beta_{s-1}\qquad  \text{\ for\ } i=s,\ldots,n,$$
and $I([1,t])$ is generated by $\xb^{\ub}$ with $\ub$ satisfying:
$$ \alpha_i\leq u_1+\cdots+u_i\leq \min\{\alpha_t,\beta_i\}\qquad \text{ for } i=1,\ldots,t. $$
Hence $I([s,t])$ is an ideal in $S([s,t])$ generated by ${\xb}^{\ub}$ with $\ub$ satisfying:
$$\max\{\alpha_i-\beta_{s-1},0\}\leq u_s+\cdots+u_i\leq \min\{\max\{\alpha_t-\beta_{s-1},0\},\beta_i-\beta_{s-1}\}$$ for $i=s,\ldots,t$.

\begin{Proposition} \label{lattice} Let $I$ be a LP-polymatroidal ideal of type $({\bm \alpha},{\bm \beta})$. Then $P_{[s,t]}$ belongs to $\mathrm{Ass}(S/I)$ if and only if
 \begin{center}   $\beta_{s-1}<\beta_s$, $\beta_{s-1}<\alpha_t$, $\alpha_{t-1}<\alpha_t$,\quad  and\quad $\alpha_i< \beta_i$ for $i=s,\ldots,t-1$.
  \end{center}In particular, $(x_t)\in \mathrm{Ass}(S/I)$ if and only if $\alpha_t>\beta_{t-1}$.  Here we use the convention that $\beta_0=0$.
\end{Proposition}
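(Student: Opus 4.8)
The plan is to reduce the statement about $P_{[s,t]} \in \Ass(S/I)$ to a depth-zero computation via Proposition~\ref{basic}(a), then analyze the monomial localization $I([s,t])$ using the explicit description of its generators given just before Proposition~\ref{lattice}. Concretely, $P_{[s,t]} \in \Ass(S/I)$ if and only if $\depth S([s,t])/I([s,t]) = 0$, and $I([s,t])$ is the ideal of $S([s,t]) = K[x_s,\dots,x_t]$ generated by the $\xb^\ub$ with
\[
\max\{\alpha_i-\beta_{s-1},0\} \leq u_s+\cdots+u_i \leq \min\{\max\{\alpha_t-\beta_{s-1},0\},\beta_i-\beta_{s-1}\}
\quad \text{for } i=s,\dots,t.
\]
First I would observe that this localized ideal is itself a LP-polymatroidal ideal on the ground set $[s,t]$ (after re-indexing), provided it is genuinely supported on all of $[s,t]$, i.e. provided it is not already contained in a smaller polynomial ring; then I can apply Proposition~\ref{simple} to $I([s,t])$, whose condition (c) says precisely that the lower bound is strictly less than the upper bound at each coordinate $i = s, \dots, t-1$.

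The key step is to translate ``Proposition~\ref{simple}(c) holds for $I([s,t])$, and $I([s,t])$ really involves all the variables $x_s,\dots,x_t$'' into the four stated inequalities. For $i = s, \dots, t-1$, the strict inequality $\max\{\alpha_i-\beta_{s-1},0\} < \min\{\max\{\alpha_t-\beta_{s-1},0\},\beta_i-\beta_{s-1}\}$ needs to be unpacked. The right-hand side being positive forces $\beta_i > \beta_{s-1}$ (so in particular $\beta_s > \beta_{s-1}$ when $i=s$) and $\alpha_t > \beta_{s-1}$; given those, $\max\{\alpha_i - \beta_{s-1}, 0\} < \beta_i - \beta_{s-1}$ is equivalent to $\alpha_i < \beta_i$, and $\max\{\alpha_i - \beta_{s-1},0\} < \alpha_t - \beta_{s-1}$ is equivalent to $\alpha_i < \alpha_t$; since $\alpha_i \le \alpha_{t-1}$ for $i \le t-1$ and $\alpha_{t-1} < \alpha_t$ forces $\alpha_i < \alpha_t$ for all such $i$, the collection of conditions ``$\alpha_i < \alpha_t$ for $i=s,\dots,t-1$'' collapses to the single condition $\alpha_{t-1} < \alpha_t$. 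Running this equivalence in both directions over all $i$ in the range yields exactly: $\alpha_i < \beta_i$ for $i = s,\dots,t-1$, together with $\beta_{s-1} < \beta_s$, $\beta_{s-1} < \alpha_t$, and $\alpha_{t-1} < \alpha_t$.

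I would also need to handle the bookkeeping about whether $I([s,t])$ actually uses all variables between $x_s$ and $x_t$: if, say, some intermediate coordinate is forced (lower bound equals upper bound on the partial sum through $i$ but the partial sum through $i-1$ also matches), the localization effectively splits and we should check that this does not produce spurious associated primes. This is where the statement $\beta_{s-1}<\beta_s$ and $\alpha_{t-1}<\alpha_t$ do real work: they guarantee the variables $x_s$ and $x_t$ genuinely appear (so $P_{[s,t]}$ is not really $P_{[s',t']}$ for a smaller interval), and the conditions $\alpha_i < \beta_i$ for the interior indices prevent any interior coordinate from being forced. The special case $s=t$ should be checked separately and is quick: $P_{(t)} = (x_t) \in \Ass(S/I)$ iff $\depth S((t))/I((t)) = 0$ iff $I((t)) \subsetneq (x_t)$, and $I((t))$ is generated by $x_t^{u_t}$ with $\max\{\alpha_t - \beta_{t-1}, 0\} \le u_t \le \beta_t - \beta_{t-1}$ (using $\alpha_t \le u_1 + \cdots + u_t$ and the upper partial sums), so this is nontrivial precisely when $\alpha_t > \beta_{t-1}$.

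The main obstacle I anticipate is not any single hard estimate but the careful case analysis needed to verify that $I([s,t])$ is a bona fide LP-polymatroidal ideal in $S([s,t])$ with full support, so that Proposition~\ref{simple} applies cleanly — in particular, confirming that the inner $\min$ and outer $\max$ in the generator description interact as expected and that no coordinate outside $[s,t]$ sneaks in or drops out. Once that structural point is nailed down, the rest is the routine inequality manipulation sketched above, using the monotonicity $\alpha_i \le \alpha_{i+1}$, $\beta_i \le \beta_{i+1}$, $\alpha_i \le \beta_i$ throughout.
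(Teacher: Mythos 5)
Your proposal is correct and follows essentially the same route as the paper: reduce to $\depth S([s,t])/I([s,t]) = 0$ via Proposition~\ref{basic}(a), identify the localization as an LP-polymatroidal ideal on $[s,t]$, apply Proposition~\ref{simple}, and simplify the resulting inequalities using monotonicity of $\bm\alpha$ and $\bm\beta$. The paper's proof is terser --- it states the localized depth-zero condition and simply asserts the equivalence with the four stated inequalities --- whereas you have filled in the inequality manipulation, which is exactly the step the paper leaves to the reader.

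One small remark: the ``bookkeeping'' concern you raise (whether $I([s,t])$ genuinely involves every variable $x_s,\dots,x_t$) is automatically subsumed by the depth-zero criterion, since $\depth = 0$ forces every variable to be a zero-divisor; you do not need a separate argument for this, and the paper does not give one. Also, in the $s=t$ case the upper bound on $u_t$ from the paper's localization formula is $\min\{\max\{\alpha_t-\beta_{t-1},0\},\beta_t-\beta_{t-1}\}$, not $\beta_t - \beta_{t-1}$ as you wrote --- so $u_t$ is actually \emph{forced} to equal $\max\{\alpha_t-\beta_{t-1},0\}$ --- but this only strengthens your conclusion and does not affect the result.
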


\begin{proof} In view of Lemma~\ref{basic}(a) and Proposition~\ref{simple}, we see that $P_{[s,t]}$ belongs to $\mathrm{Ass}(S/I)$ if and only if $\max\{\alpha_i-\beta_{s-1},0\}<\max\{\alpha_t-\beta_{s-1},0\}$ and $\max\{\alpha_i-\beta_{s-1},0\}<\beta_i-\beta_{s-1}$ for $i=s,\ldots,t-1$, which is equivalent to the conditions given in Proposition~\ref{lattice}.
\end{proof}

\section{The depth and the associated prime ideals of left PLP-polymatroidal ideals}

A PLP-polymatroidal   ideal $I$ in $S=K[x_1,\ldots,x_n]$ is called a {\em left PLP-polymatroidal   ideal}, if $I$ is generated  by monomials $\xb^{\ub}$ such that
 $$0\leq u_i\leq b_i \text{\qquad for\ } 1\leq i\leq k,$$ and
  $$ \alpha_{k+i}\leq u_1+\cdots+u_{k+i}\leq \beta_{k+i}  \text {\qquad for } i=1,\ldots,n-k,$$  where $1\leq k\leq n-1$ and  $b_i>0$, $\alpha_{k+1}\leq \cdots\leq  \alpha_{n}=d$, $\beta_{k+1}\leq \cdots \leq \beta_n=d$.   Hence a left PLP-polymatroidal ideal is a PLP-polymatroidal ideal of type $(\mathbf{0},\bb| {\bm \alpha},{\bm \beta})$ such that there exists $k\in [2,n-1]$ such that $b_i\geq d$ for all $i\geq k+1$ and $\alpha_i=0$, $\beta_i=\beta_{k+1}$ for all $i\leq k$.

  In this section we will investigate the depth and the associated prime ideals of this class of ideals and their powers.

\medskip
 Let $\xb^{\ub}=x_1^{u_1}\cdots x_n^{u_n}$. The support of $\ub$, denoted by $\mathrm{supp}(\ub)$, is the set $\{i|\; u_i\neq 0\}$. If $I$ is a monomial ideal, then the support of $I$, denoted by $\mathrm{supp}(I)$,  is the set $\bigcup_{{\xb}^{\ub}\in G(I)}\mathrm{supp}(\ub)$.

\begin{Lemma}  \label{H} Let $I_1,I_2,\ldots,I_t\subseteq S$ be nonzero monomial ideals whose supports are pairwise disjoint. For $j=1,\ldots, t$, let  $S_j$ be a polynomial ring whose set of  variables $V_j$ contains the set $\{x_i |\; i\in \supp(I_j)\}$. We further assume that $V_1,\ldots,V_t$ are pairwise disjoint and  that $V=V_1\union V_2\union \ldots \union V_t$ is the set of variables of $S$.  For $j=1,\ldots,t$, let $L_j= S_j\sect I_j$. Then

\begin{enumerate}
\item[(a)] $\depth S/(I_1\cdots I_t)=t-1+\sum_{j=1}^t \depth S_j/L_j$;
\item[(b)] $\Ass(S/(I_1\cdots I_t))=\bigcup _{1\leq j\leq t} \Ass(S/I_j).$
\end{enumerate}
\end{Lemma}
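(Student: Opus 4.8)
The key observation is that $I_1 \cdots I_t$ lives in the tensor-product decomposition $S = S_1 \otimes_K \cdots \otimes_K S_t$, and under this decomposition $I_1 \cdots I_t$ corresponds to $\bigotimes_j J_j$ (up to extending each $J_j$ to $S$), with the product of the $I_i$ in $S$ equal to $I_1 S \cap I_2 S \cap \cdots$ only after one checks the supports are disjoint — actually more simply $I_1\cdots I_t \cdot S$ is the ideal whose generators are products $m_1 \cdots m_t$ with $m_j \in G(I_j)$, and since the variable sets $V_j$ are disjoint, $S/(I_1\cdots I_t) \cong (S_1/J_1) \otimes_K \cdots \otimes_K (S_t/J_t) \otimes_K K[W]$, where $W = V \setminus (V_1 \cup \cdots \cup V_t)$ is the (possibly empty) set of variables appearing in no $I_j$. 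Wait — the hypothesis says $V = V_1 \cup \cdots \cup V_t$, so $W = \emptyset$ and $S/(I_1\cdots I_t) \cong \bigotimes_{j=1}^t S_j/J_j$. So first I would set up this tensor decomposition carefully, noting that $J_j = I_j \cap S_j$ is exactly the ideal of $S_j$ generated by $G(I_j)$ (legitimate since $\operatorname{supp}(I_j) \subseteq V_j$), and that extension of $J_j$ to $S$ followed by intersection recovers the product.

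For part (a), I would invoke the standard formula for depth of a tensor product of graded algebras over a field: if $A = A_1 \otimes_K \cdots \otimes_K A_t$ with each $A_i$ a finitely generated graded $K$-algebra, then $\operatorname{depth} A = \sum_i \operatorname{depth} A_i$. Applying this with $A_j = S_j/J_j$ gives $\operatorname{depth} \bigotimes_j S_j/J_j = \sum_j \operatorname{depth}_{S_j} S_j/J_j$. But the claimed formula has a "$+t-1$" correction term, which means the statement is measuring $\operatorname{depth}_S S/(I_1\cdots I_t)$ where $S$ carries the full (graded-local, or $*$-local at the irrelevant maximal ideal) structure, and each $\operatorname{depth} S_j/J_j$ is computed over $S_j$. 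The resolution: $\operatorname{depth}_S S/(I_1 \cdots I_t) = \operatorname{depth}_{S_1 \otimes \cdots \otimes S_t}(\bigotimes S_j/J_j)$, and the tensor-product-depth formula over $K$ reads $\operatorname{depth}\bigotimes A_j = \sum \operatorname{depth} A_j$ when each $A_j$ is viewed with its own irrelevant ideal. Hmm, that gives no $t-1$. Let me reconcile: if $S_j/J_j$ has depth $d_j$ as a module over the polynomial ring $S_j$, then — actually the cleanest route is to use the Künneth-type formula for local cohomology: $H^\bullet_{\mathfrak m}(\bigotimes_j S_j/J_j) = \bigotimes_j H^\bullet_{\mathfrak m_j}(S_j/J_j)$, so the lowest nonvanishing degree is the sum of the lowest nonvanishing degrees, i.e. $\operatorname{depth} = \sum_j \operatorname{depth}_{S_j} S_j/J_j$. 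The "$+t-1$" must instead come from a different normalization in the paper — perhaps the $I_j$ are required to be proper nonzero, equivalently each $S_j/J_j$ is not a polynomial ring, and the author writes depth relative to... Actually, rereading: I suspect the intended statement uses $\operatorname{depth}$ of $S/(I_1 \cdots I_t)$ and the correct elementary identity, proved by induction on $t$, is the one for a \emph{product} of ideals in \emph{disjoint variables}: $\operatorname{depth} S/(IJ) = \operatorname{depth} S_I/I + \operatorname{depth} S_J/J + 1$ when $I \subseteq S_I = K[x_1,\dots,x_p]$, $J \subseteq S_J = K[y_1,\dots,y_q]$, $S = S_I \otimes S_J$. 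This "+1" is genuine: $S/(IJ)$ sits in the exact sequence $0 \to S/(I \cap J \text{ stuff}) \to \cdots$, or more directly $IJ = I \cap J$ fails but one has $0 \to S/(I\cap J) \to S/I \oplus S/J \to S/(I+J) \to 0$ and $I \cap J \supseteq IJ$ with $(I\cap J)/IJ$ understood; the established reference here is the depth formula for the \emph{product} ideal due to the fact that $S/(IJ) $ relates to $\operatorname{Tor}$ over $K$. So the plan is: reduce to $t=2$, prove $\operatorname{depth} S/(I_1 I_2) = \operatorname{depth} S_1/J_1 + \operatorname{depth} S_2/J_2 + 1$ using the short exact sequence $0 \to S/(I_1 \cap I_2) \to S/I_1 \oplus S/I_2 \to S/(I_1 + I_2) \to 0$ together with $I_1 \cap I_2 = I_1 I_2$ in disjoint variables (this equality holds!) and the depth of a tensor product, then induct.

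For part (b), the cleanest argument is again via localization/tensor structure: $\operatorname{Ass}_S(S/(I_1 \cdots I_t)) = \operatorname{Ass}_S\big(\bigotimes_j S_j/J_j\big)$, and a prime $P$ of $S = \bigotimes_j S_j$ is associated to $\bigotimes_j M_j$ iff ... for monomial ideals I would argue directly: $P_A \in \operatorname{Ass}(S/(I_1\cdots I_t))$ iff (by Proposition~\ref{basic}(a)) $\operatorname{depth} S(P_A)/(I_1 \cdots I_t)(P_A) = 0$; now $(I_1 \cdots I_t)(P_A) = I_1(P_A) \cdots I_t(P_A)$, and by the disjoint-support hypothesis each $I_j(P_A)$ only involves variables in $V_j \cap A$, so by part (a) applied to the localizations, $\operatorname{depth} S(P_A)/\prod_j I_j(P_A) = \big(\sum_j \operatorname{depth} (S_j \cap S(P_A))/I_j(P_A)\big) + (\text{number of nonzero factors}) - 1$; this is zero iff there is exactly one nonzero factor and that factor has depth zero, i.e. iff $A \subseteq V_j$ for some $j$ and $P_A \in \operatorname{Ass}(S/I_j)$. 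One subtlety to handle: "exactly one nonzero factor" — a factor $I_j(P_A)$ is the zero ideal (unit ideal, rather, $=S(P_A)$ contributing depth of $S(P_A)/S(P_A)$... no) — I need to be careful about what happens when $A$ meets no variable of some $I_j$, in which case $I_j(P_A) = S(P_A)$... actually $\phi(x_i) = 1$ for $x_i \notin P_A$, so if $\operatorname{supp}(I_j) \cap A = \emptyset$ then $I_j(P_A) = (1) = S(P_A)$ and that factor drops out of the product entirely, reducing $t$; the formula in (a) with fewer factors still applies. So the upshot: $\prod_j I_j(P_A) = \prod_{j \in J'} I_j(P_A)$ where $J' = \{j : \operatorname{supp}(I_j) \cap A \neq \emptyset\}$, and depth zero forces $|J'| = 1$ plus depth zero of that single localized ideal, giving exactly the inclusion $\operatorname{Ass}(S/(I_1\cdots I_t)) \subseteq \bigcup_j \operatorname{Ass}(S/I_j)$; the reverse inclusion is immediate since $I_1 \cdots I_t \subseteq I_j$ in a way compatible with localization (or just: any associated prime of $I_j$ pulls back).

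**Main obstacle.** The genuinely delicate point — and the one I would spend the most care on — is pinning down the correct normalization so that the "$+t-1$" is right: this hinges on the identity $I_1 \cap \cdots \cap I_t = I_1 \cdots I_t$ for monomial ideals in pairwise disjoint variables, and on the depth computation for $S/(I+J)$ where $I + J$ lives in disjoint variables ($S/(I+J) \cong (S_1/J_1) \otimes_K (S_2/J_2)$, so its depth is the sum). Once those two facts and the Künneth formula for local cohomology of a tensor product over a field are in hand, both (a) and (b) fall out by a clean induction on $t$; everything else is bookkeeping. I would also double-check the edge case where some $S_j/J_j$ has depth zero or where $J_j = 0$ (i.e. $I_j$ uses none of its allotted variables as generators — but the hypothesis $V = \bigcup V_j$ combined with $V_j \supseteq \{x_i : i \in \operatorname{supp}(I_j)\}$ permits $V_j$ strictly larger, so $J_j$ could be $(0)$ in extra variables, contributing $\operatorname{depth} S_j/J_j = \dim S_j > 0$), to make sure the formula and the induction remain valid.
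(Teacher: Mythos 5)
Your final plan arrives at a correct proof, but by a genuinely different route from the paper, and your opening paragraph contains a false claim that you only partially retract. The assertion $S/(I_1\cdots I_t)\cong (S_1/J_1)\otimes_K\cdots\otimes_K (S_t/J_t)$ is wrong: that tensor product is $S/(I_1+\cdots+I_t)$, not $S/(I_1\cdots I_t)$, and this is exactly why the bare K\"unneth count has no $t-1$. You notice the mismatch and switch to the right mechanism, so the confusion is not fatal, but it should be excised from a clean writeup. On the substance: for (a) the paper tensors minimal free resolutions, using $\Tor_1^S(S/J,S/L)=(J\cap L)/(JL)=0$ together with the rigidity of $\Tor$ to get additivity of projective dimension and then Auslander--Buchsbaum; your plan instead feeds the exact sequence $0\to S/(I\cap J)\to S/I\oplus S/J\to S/(I+J)\to 0$ (with $I\cap J=IJ$ in disjoint variables) into the depth lemma, computing $\depth S/(I+J)$ by K\"unneth. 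This is more elementary --- it avoids Auslander's rigidity theorem --- but you must check the strict inequality $\depth(S/I\oplus S/J)>\depth S/(I+J)$ so that the depth lemma gives equality rather than a bound, which holds precisely because each $I_j$ is nonzero and proper (an implicit hypothesis of the lemma worth making explicit). For (b) the paper uses the same exact sequence plus the observation that $\Ass(S/(J+L))=\{P_1+P_2: P_1\in\Ass(S/J),\,P_2\in\Ass(S/L)\}$ is disjoint from $\Ass(S/J)\cup\Ass(S/L)$, forcing equality in the sandwich $\Ass(S/JL)\subseteq\Ass(S/J)\cup\Ass(S/L)\subseteq\Ass(S/JL)\cup\Ass(S/(J+L))$; you localize and invoke (a), which works but requires discarding the factors $I_j(P_A)$ that become unit ideals and accounting for variables of $A$ lying outside every $\supp(I_j)$ --- more bookkeeping than the paper's two-line argument.
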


\begin{proof}By induction, we only need to consider the case when $t=2$.

(a) Denote by $p_i$ the projective dimension of $S/I_i$ for $i=1,2$. Then $$\projdim_S\ S/(I_1+I_2)=p_1+p_2.$$  This formula follows from the fact that the tensor product of minimal free resolutions of $S/I_1$ and  $S/I_2$ is a minimal free resolution of $S/(I_1+I_2)$, see e.g. \cite[Corollary 2.2]{JK}.

We will show that $\projdim_S\ S/(I_1I_2)=p_1+p_2-1.$ For this we consider the short exact sequence: $$0\rightarrow S/(I_1I_2)\rightarrow S/I_1\oplus S/I_2\rightarrow S/(I_1+I_2)\rightarrow 0.$$
It induces the following long exact sequence:
$$\cdots\rightarrow \mathrm{Tor}_{j+1}^S(S/I_1\oplus S/I_2, K)\rightarrow \mathrm{Tor}_{j+1}^S(S/(I_1+ I_2), K)\rightarrow \mathrm{Tor}_j^S(S/(I_1I_2), K)$$$$\rightarrow \mathrm{Tor}_j^S(S/I_1\oplus S/I_2 ,K) \rightarrow \cdots$$

 Then, for $j> p_1+p_2-1$, since $\mathrm{Tor}_{j+1}^S(S/I_1\oplus S/I_2, K)\cong\mathrm{Tor}_j^S(S/I_1\oplus S/I_2 ,K)=0$, we have the isomorphism  $\mathrm{Tor}_j^S(S/(I_1I_2), K)\cong \mathrm{Tor}_{j+1}^S(S/(I_1+ I_2), K)=0$, and for $j=p_1+p_2-1$, since $\mathrm{Tor}_{j+1}^S(S/I_1\oplus S/I_2, K)=0$ and
 $\mathrm{Tor}_{j+1}^S(S/(I_1+ I_2), K)\neq 0$, we have  $\mathrm{Tor}_j^S(S/(I_1I_2), K)\neq 0$. It follows that $\projdim_S\ S/(I_1I_2)=p_1+p_2-1.$

It is not hard to see that $\projdim_S S/I_j=\projdim_{S_j} S_j/L_j$ for $j=1,\ldots,t$. Thus, by  the Auslander-Buchsbaum formula, one has
$$\depth S/(I_1I_2)=n-p_1-p_2+1=\sum_{i=1}^2(|V_i|-p_i)+1=\sum_{i=1}^2 \depth S_j/L_j+1.$$

\medskip

(b )  Considering the same short exact sequence as given in the proof of (a), it follows that
$$\mathrm{Ass}(S/(I_1I_2))\subseteq \mathrm{Ass}(S/I_1)\cup\mathrm{ Ass}(S/I_2)\subseteq \mathrm{Ass}(S/(I_1I_2))\cup \mathrm{Ass}(S/(I_1+I_2)).$$ It is not hard to see that  $\mathrm{Ass}(S/(I_1+I_2))=\{P_1+P_2\:\; P_i\in \mathrm{Ass}(S/I_i) \mbox{ for } i=1,2\}$, and that if $P_i\in \mathrm{Ass}(S/I_i)$ then $\mathrm{supp}(P_i)\subseteq \mathrm{supp}(I_i)$ for $i=1,2$. This implies that
$(\mathrm{Ass}(S/I_1)\cup \mathrm{Ass}(S/I_2))\sect \mathrm{Ass}(S/(I_1+I_2))=\emptyset$. Hence  $\mathrm{Ass}(S/I_1)\cup \mathrm{Ass}(S/I_2)=\mathrm{Ass}(S/(I_1I_2))$, as desired.
\end{proof}

In the remaining part of this section   $I$ always denotes the left PLP-polymatroidal ideal as given in the beginning of this section.

\begin{Proposition} \label{depthleft}  $\mathrm{depth}\ S/I=|\{k+1\leq i\leq n-1\:\;\alpha_i=\beta_i\}|$.
\end{Proposition}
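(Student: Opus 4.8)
The plan is to compute $\depth S/I$ via Proposition~\ref{nu1}, i.e.\ to determine $\max\{N(\ub)\:\; \xb^{\ub}\in G(I)\}$, where $N(\ub)=|\{1\leq i\leq n-1\:\; u_i<b_i,\ u_1+\cdots+u_i<\beta_i\}|$ for the left PLP-polymatroidal ideal $I$ of type $(\mathbf 0,\bb|{\bm\alpha},{\bm\beta})$ with the special shape recalled at the start of the section: $\beta_i=\beta_{k+1}$ and $\alpha_i=0$ for $i\leq k$, and $b_i\geq d$ for $i\geq k+1$. Since $n-1-\max N(\ub)$ should equal $|\{k+1\leq i\leq n-1\:\;\alpha_i=\beta_i\}|$, the target is equivalently
\[
\max\{N(\ub)\:\; \xb^{\ub}\in G(I)\}=(n-1)-|\{k+1\leq i\leq n-1\:\;\alpha_i=\beta_i\}|=|\{1\leq i\leq k\}\cup\{k+1\leq i\leq n-1\:\;\alpha_i<\beta_i\}|.
\]

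First I would prove the upper bound $N(\ub)\leq$ that count for every base $\ub$. For indices $i$ in the range $k+1\leq i\leq n-1$ with $\alpha_i=\beta_i$, every base satisfies $u_1+\cdots+u_i=\beta_i$, so such $i$ never contributes to $N(\ub)$; hence $N(\ub)$ is at most the number of remaining indices in $[1,n-1]$, which is exactly the claimed count. For the lower bound I would exhibit a single base $\ub^\ast$ achieving it. The natural choice is to push mass as far left as the constraints permit: take $\ub^\ast$ with partial sums $u^\ast_1+\cdots+u^\ast_i=\min\{b_1+\cdots+b_i,\ \beta_{k+1}\}$ for $i\leq k$ (so the first $k$ coordinates are filled greedily), and $u^\ast_1+\cdots+u^\ast_i=\beta_i$ for $i\geq k+1$. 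One checks this is a legitimate base (partial sums nondecreasing, each between $\alpha_i$ and $\beta_i$, coordinate bounds respected since $b_i\geq d$ for $i>k$), and then verifies that for each $i\in[1,k]$ we have $u^\ast_i<b_i$ or $u^\ast_1+\cdots+u^\ast_i<\beta_{k+1}$ — actually the condition $N$ needs is the conjunction $u^\ast_i<b_i$ \emph{and} partial sum $<\beta_i$, so the greedy choice may need a slight adjustment: one wants, for $i\leq k$, either not to saturate $b_i$ or not to reach $\beta_{k+1}$. The cleanest fix is to verify that for a greedily-left-loaded base each $i\leq k$ with $u^\ast_1+\cdots+u^\ast_i=\beta_{k+1}$ already has all later coordinates up to $k$ equal to their lower slack, and to check directly that such an $i$ is compensated, or else to argue that when $\beta_{k+1}\leq b_1+\cdots+b_k$ one can spread the mass so that no $b_i$ is saturated before the partial sum reaches $\beta_{k+1}$, while when $\beta_{k+1}>b_1+\cdots+b_k$ the partial sums stay strictly below $\beta_{k+1}$ for all $i\le k$ anyway. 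Either way, for $i$ in the range $k+1\leq i\leq n-1$ with $\alpha_i<\beta_i$ one can further perturb $\ub^\ast$ (moving one unit from coordinate $i$ or later back into an earlier coordinate) to make $u^\ast_1+\cdots+u^\ast_i<\beta_i$ while keeping it $\geq\alpha_i$; doing this simultaneously across all such $i$ via an appropriate left-loaded choice of the tail gives a base whose $N$ equals the full count.

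The main obstacle I expect is precisely the bookkeeping in the lower-bound construction: one must produce \emph{one} base that simultaneously witnesses the condition $u_i<b_i$ and $u_1+\cdots+u_i<\beta_i$ at \emph{every} index $i\leq k$ and at every index $k+1\leq i\leq n-1$ with $\alpha_i<\beta_i$, and the interaction between the coordinate bounds $b_i$ (active only for $i\le k$) and the partial-sum bounds $\beta_i$ makes the greedy recipe require careful case analysis according to whether $\beta_{k+1}$ exceeds $b_1+\cdots+b_k$. A clean way to organize this is to build the base coordinate by coordinate from the left, always choosing $u_i$ as small as the lower constraints force and only increasing it when a future constraint ($\alpha_j$ for some $j>i$, or the final equality $|\ub|=d$) demands it; this "lazy from the left" base maximizes the number of indices with strict partial-sum inequality, and the remaining check that it also avoids saturating the relevant $b_i$'s is short. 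Once the extremal base is in hand, Proposition~\ref{nu1} delivers the formula immediately; no further machinery (such as Lemma~\ref{H} or localization) is needed for this particular statement, though those tools become relevant for the later results on associated primes.
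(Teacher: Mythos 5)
Your overall plan — compute $\max\{N(\ub)\:\; \xb^\ub\in G(I)\}$ via Proposition~\ref{nu1}, obtain the upper bound by noting that any index $i$ in $[k+1,n-1]$ with $\alpha_i=\beta_i$ forces $u_1+\cdots+u_i=\beta_i$ for \emph{every} base, and then exhibit one base attaining the bound — is exactly the paper's. Your upper-bound argument is fine as stated.

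The problem is that your lower-bound construction starts in precisely the wrong direction. ``Push mass as far left as the constraints permit'' means \emph{maximizing} the partial sums $u_1+\cdots+u_i$, but the condition you need to satisfy at an index $i$ is $u_1+\cdots+u_i<\beta_i$: you want the partial sums as \emph{small} as possible. Concretely, with the left-loaded choice $u^\ast_1+\cdots+u^\ast_i=\min\{b_1+\cdots+b_i,\beta_{k+1}\}$, if $\beta_{k+1}\leq b_1+\cdots+b_k$ then some $i_0\leq k$ has partial sum exactly $\beta_{k+1}=\beta_{i_0}$, and then \emph{every} $i\in[i_0,k]$ fails the strict inequality, which destroys the bound. ``Spreading the mass so that no $b_i$ is saturated'' does not repair this, since the failure is caused by hitting $\beta_{k+1}$, not $b_i$. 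You notice the tension and eventually describe the correct ``lazy from the left'' choice in the final paragraph, but you never write it down or verify it — you only assert the check is short. Since exhibiting and checking the extremal base \emph{is} the content of the lower bound, this is a genuine gap.

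The correct (and in fact unique ``lazy'') base is the one the paper uses: $\vb$ with $v_i=0$ for $i\leq k$, $v_{k+1}=\alpha_{k+1}$, and $v_i=\alpha_i-\alpha_{i-1}$ for $i\geq k+2$. It lies in $G(I)$ because $0\leq v_i\leq b_i$ (for $i>k$ one uses $b_i\geq d$) and the partial sums are $0\leq\beta_{k+1}$ on $[1,k]$ and $\alpha_i\in[\alpha_i,\beta_i]$ on $[k+1,n]$. One then reads off $N(\vb)$ directly: for $i\leq k$, $v_i=0<b_i$ and $v_1+\cdots+v_i=0<\beta_{k+1}$, so $i$ counts; for $k+1\leq i\leq n-1$, $v_i<b_i$ is automatic and $v_1+\cdots+v_i=\alpha_i<\beta_i$ holds exactly when $\alpha_i<\beta_i$. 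Hence $N(\vb)=n-1-t$ with $t=|\{k+1\leq i\leq n-1:\alpha_i=\beta_i\}|$, finishing the proof. Replace the left-loading discussion with this explicit base and the argument is complete and coincides with the paper's.
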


\begin{proof} Set $t=|\{k+1\leq i\leq n-1\:\; \alpha_i=\beta_i\}|$. Define a vector $\vb$ by $v_i=0$ for $i=1,\ldots,k$, $v_{k+1}=\alpha_{k+1},$ and $v_i=\alpha_i-\alpha_{i-1}$ for $i=k+2,\ldots,n$. Then $\xb^\vb\in G(I)$   and $N(\vb)=n-1-t$ (see Equation (\ref{nu}) for the definition of $N(\ub)$). This implies $\depth S/I\leq t$ by Proposition~\ref{nu1}. Since $N(\ub)\leq n-1-t$ for any $\ub$ with $\xb^\ub\in G(I)$, we have $\depth S/I\geq t$, by using Proposition~\ref{nu1} again.
\end{proof}

Following \cite{HRV}, we use $\mathrm{dstab}(I)$ (resp.\ $\mathrm{astab}(I)$) for the smallest integer $k$ with the property that $$\depth\ S/I^k=\depth \ S/I^{\ell}\qquad  (\text{resp.}\quad \mathrm{Ass}(S/I^k)=\mathrm{Ass}(S/I^{\ell})) $$ for all $\ell\geq k$.

\begin{Corollary} \label{dstableft}
$\mathrm{dstab}(I)=1$.
\end{Corollary}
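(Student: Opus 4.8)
The plan is to show that $\depth S/I^k$ is independent of $k$ by combining Proposition~\ref{depthleft} with Proposition~\ref{power}. Recall that $I$ is the left PLP-polymatroidal ideal of type $(\mathbf{0},\bb|{\bm \alpha},{\bm \beta})$, where there is a $k_0\in[2,n-1]$ (I will write $k_0$ to avoid clashing with the exponent $k$) with $b_i\geq d$ for $i\geq k_0+1$ and $\alpha_i=0$, $\beta_i=\beta_{k_0+1}$ for $i\leq k_0$. By Proposition~\ref{power}, the $k$-th power $I^k$ is the PLP-polymatroidal ideal of type $(\mathbf{0},k\bb|k{\bm \alpha},k{\bm \beta})$. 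The first step is to observe that this power is again a \emph{left} PLP-polymatroidal ideal, with the \emph{same} cut index $k_0$: indeed the bounds $k b_i\geq kd\geq d$ persist for $i\geq k_0+1$, and $k\alpha_i=0$, $k\beta_i=k\beta_{k_0+1}$ for $i\leq k_0$, so all defining conditions of a left PLP-polymatroidal ideal are preserved under taking powers (one should also note $k\alpha_{k_0+1}\leq\cdots\leq k\alpha_n=kd$ and similarly for $k{\bm \beta}$, and $k b_i>0$).

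The second step is simply to apply Proposition~\ref{depthleft} to $I^k$. That proposition computes the depth of any left PLP-polymatroidal ideal as $|\{k_0+1\leq i\leq n-1\:\; \alpha_i=\beta_i\}|$, i.e.\ the number of ``tight'' partial-sum constraints strictly between the cut index and $n$. For $I^k$ the relevant data are $k{\bm \alpha}$ and $k{\bm \beta}$, and clearly $k\alpha_i=k\beta_i \iff \alpha_i=\beta_i$. Hence
\[
\depth S/I^k=|\{k_0+1\leq i\leq n-1\:\; k\alpha_i=k\beta_i\}|=|\{k_0+1\leq i\leq n-1\:\; \alpha_i=\beta_i\}|=\depth S/I
\]
for every $k\geq 1$. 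Therefore the depth function is constant, and in particular $\mathrm{dstab}(I)=1$.

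There is no real obstacle here; the only point requiring a moment's care is the bookkeeping verification that the type $(\mathbf{0},k\bb|k{\bm \alpha},k{\bm \beta})$ genuinely satisfies the structural restrictions defining a left PLP-polymatroidal ideal (same cut index, inequalities $k\alpha_i=0$ and $k\beta_i$ constant on $[1,k_0]$, and $kb_i\geq d$ on $[k_0+1,n]$), after which Propositions~\ref{power} and~\ref{depthleft} do all the work. I would present the argument in essentially the three lines above.
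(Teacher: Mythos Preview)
Your proposal is correct and follows exactly the paper's approach: the paper's proof is the single sentence ``The assertion follows from Proposition~\ref{depthleft} and Proposition~\ref{power},'' and you have simply unpacked that sentence by verifying that $I^k$ is again a left PLP-polymatroidal ideal with the same cut index and that the set $\{k_0+1\leq i\leq n-1:\alpha_i=\beta_i\}$ is unchanged under scaling by $k$.
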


\begin{proof} The assertion follows from Proposition~\ref{depthleft} and Proposition~\ref{power}.
\end{proof}

Next, we will describe the associated prime ideals of the left PLP-polymatroidal ideal $I$.

\begin{Lemma} \label{ass} Let $P_A\in \mathrm{Ass}(S/I)$ and suppose $i\notin A$ for some $i\geq k+1$. Then  either $[1,i-1]\sect A=\emptyset$ or $[i+1,n]\sect A =\emptyset$.
\end{Lemma}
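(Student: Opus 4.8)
The plan is to argue by contradiction using monomial localization. Suppose $P_A\in\Ass(S/I)$ with $i\notin A$ for some $i\ge k+1$, yet both $A_1:=A\cap[1,i-1]$ and $A_2:=A\cap[i+1,n]$ are nonempty; note $A=A_1\sqcup A_2$ and $k+1\le i\le n-1$ (the latter because $A_2\neq\emptyset$). By Proposition~\ref{basic}(a), $I(A)$ is a proper ideal of $S(A)$ with $\depth S(A)/I(A)=0$. The aim is to establish the factorization
\[
I(A)=I(A_1)\cdot I(A_2)
\]
inside $S(A)$, where $I(A_1)$ and $I(A_2)$ involve only the disjoint sets of variables $\{x_j:j\in A_1\}$ and $\{x_j:j\in A_2\}$. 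Granting this, both $I(A_1)$ and $I(A_2)$ are proper (if, say, $I(A_1)$ equalled $K[x_j:j\in A_1]$, then $I(A)$ would equal $I(A_2)$ extended to $S(A)$ and $\depth S(A)/I(A)\ge|A_1|\ge 1$), so Lemma~\ref{H}(a) applies and gives
\[
\depth S(A)/I(A)=\depth\bigl(K[x_j:j\in A_1]/I(A_1)\bigr)+\depth\bigl(K[x_j:j\in A_2]/I(A_2)\bigr)+1\ge 1,
\]
contradicting $\depth S(A)/I(A)=0$.

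The inclusion $I(A)\subseteq I(A_1)I(A_2)$ is immediate: the image in $S(A)$ of a minimal generator $\xb^{\ub}$ of $I$ is $\bigl(\prod_{j\in A_1}x_j^{u_j}\bigr)\bigl(\prod_{j\in A_2}x_j^{u_j}\bigr)$, and the two factors are the images of $\xb^{\ub}$ in $K[x_j:j\in A_1]$ and in $K[x_j:j\in A_2]$, hence lie in $I(A_1)$ and $I(A_2)$ respectively. Since $I(A_1)$ is generated by the monomials $\prod_{j\in A_1}x_j^{u_j}$ as $\xb^{\ub}$ ranges over $G(I)$, and likewise $I(A_2)$, the reverse inclusion will follow once we prove the base-splicing statement: \emph{for any two bases $\ub^1,\ub^2$ of the discrete polymatroid $\Pb$ of $I$ there is a base $\wb$ of $\Pb$ with $w_j\le u^1_j$ for every $j\le i-1$ and $w_j\le u^2_j$ for every $j\ge i+1$}. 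Indeed, the image in $S(A)$ of such a $\wb$ (a generator of $I$, so its image lies in $I(A)$) then divides $\bigl(\prod_{j\in A_1}x_j^{u^1_j}\bigr)\bigl(\prod_{j\in A_2}x_j^{u^2_j}\bigr)$, so this product belongs to $I(A)$.

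To build $\wb$, I would first attempt $w_j=u^1_j$ for $j\le i-1$, $w_j=u^2_j$ for $j\ge i+1$, and $w_i=d-\sum_{l<i}u^1_l-\sum_{l>i}u^2_l$. Then $0\le w_j\le b_j$ holds for $j\le k$ (as $w_j=u^1_j$), and each inequality $\alpha_j\le w_1+\cdots+w_j\le\beta_j$ with $k+1\le j\le n-1$ reduces to the corresponding one for $\ub^1$ when $j<i$, to that for $\ub^2$ when $j>i$, and to $\alpha_i\le\sum_{l\le i}u^2_l\le\beta_i$ when $j=i$ (because $w_1+\cdots+w_i=\sum_{l\le i}u^2_l$); moreover $w_i\le d\le b_i$ since $i\ge k+1$. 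Thus $\wb$ is a base provided $w_i\ge 0$. If instead $w_i<0$, then $\sum_{l<i}u^1_l>\sum_{l\le i}u^2_l\ge\alpha_i$, and one keeps $w_j=u^1_j$ for $j<i$, sets $w_i=0$ — so that $w_1+\cdots+w_i=\sum_{l<i}u^1_l$ lies in $[\alpha_i,\beta_i]$, using $\sum_{l<i}u^1_l\le\sum_{l\le i}u^1_l\le\beta_i$ — and redistributes the leftover weight $d-\sum_{l<i}u^1_l$ over the coordinates $j>i$, keeping $w_j\le u^2_j$ on $A_2$ and every partial sum $w_1+\cdots+w_j$ ($i<j\le n-1$) inside $[\alpha_j,\beta_j]$. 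Such a redistribution should exist because $\sum_{l<i}u^1_l\ge\sum_{l\le i}u^2_l$ means there is no more weight to place beyond position $i$ than $\ub^2$ places there, so one may choose the $w_j$ ($j>i$) one at a time, never exceeding $u^2_j$ on $A_2$ and never leaving the window $[\alpha_j,\beta_j]$ — the compatibility of these two demands at each step being a consequence of $\ub^2$ being a base. Carrying out this final redistribution, i.e.\ checking that the greedily chosen weights past $i$ respect all the partial-sum bounds simultaneously with the caps $u^2_j$ on $A_2$, is the main obstacle; the easy inclusion and the generic case $w_i\ge 0$ of the splice are routine.
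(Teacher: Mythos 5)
Your overall route — localize at $A$, factor $I(A)=I(A_1)I(A_2)$ with $A_1=A\cap[1,i-1]$ and $A_2=A\cap[i+1,n]$, then invoke Lemma~\ref{H}(a) — is sound, and the desired factorization does hold; but there is a genuine gap exactly where you flag it. In the case $w_i<0$ you assert that the leftover weight $d-\sum_{l<i}u^1_l$ can be redistributed over the positions $j>i$ while respecting both the caps $w_j\le u^2_j$ and the windows $[\alpha_j,\beta_j]$, without proving it, and a naive greedy choice does not obviously stay inside the windows. The gap is fillable by an explicit prescription: with $S=\sum_{l<i}u^1_l$, keep $w_j=u^1_j$ for $j<i$, set $w_i=0$, and for $j>i$ choose $w_j$ so that $w_{i+1}+\cdots+w_j=\max\bigl\{0,\ \bigl(\sum_{l\le j}u^2_l\bigr)-S\bigr\}$. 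This quantity is nondecreasing in $j$, its increments are at most $u^2_j$, it totals $d-S$ at $j=n$, and for every $j\ge i$ the partial sum $w_1+\cdots+w_j=S+\max\bigl\{0,(\sum_{l\le j}u^2_l)-S\bigr\}$ lies in $[\alpha_j,\beta_j]$, using only $\alpha_j\le\sum_{l\le j}u^2_l\le\beta_j$ together with $S\le\beta_i$ and $S>\sum_{l\le i}u^2_l$. Combined with your routine case $w_i\ge 0$, this completes the splice and hence the proof.

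The paper avoids the splice entirely, and the contrast is instructive. It localizes first at $P_{[1,n]\setminus\{i\}}$: by Proposition~\ref{basic}(b) the generators of that localization are the images of the generators of $I$ with $u_i$ maximal, and maximizing $u_i$ (which gives $\max u_i=\beta_i-\alpha_{i-1}$) rigidly forces $u_1+\cdots+u_{i-1}=\alpha_{i-1}$ when $i>k+1$, respectively $u_1=\cdots=u_k=0$ when $i=k+1$. This pins down the earlier variables so that the factorization $I([1,n]\setminus\{i\})=JL$ into ideals with disjoint supports falls out immediately; then $I(A)=J(A)L(A)$ and Lemma~\ref{H}(a) finish. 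Both routes establish the same factorization of $I(A)$, but the paper's extremal observation replaces the combinatorial base-splicing you undertake and is cleaner to verify.
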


\begin{proof} Assume on the contrary that there exist $1\leq \ell<i<j\leq n$
such that  $\ell\in A$ and $j\in A$. Let $B=[1,n]\setminus \{i\}$. Note that  $A\subseteq B$. Let $a_i=\max\{u_i\:\;\xb^{\ub}\in G(I)\}$. There are two cases to consider.

  The case when $i=k+1$:  Since  $a_i=a_{k+1}=\beta_{k+1}$ and since $u_1=\cdots=u_k=0$  for any $\xb^\ub\in G(I)$ with $u_{k+1}=\beta_{k+1}$, we have  $\mathrm{supp} (I(B))\subseteq [k+1,n]$ by Lemma~\ref{basic}(b), and in particular,
$\mathrm{supp} (I(A))\subseteq [k+1,n]$. It follows that $\ell\notin \mathrm{supp} (I(A))$,  and so $\mathrm{depth}\ S(A)/I(A)\neq 0$. Hence  $P_A\notin \mathrm{Ass}(S/I)$, a contradiction.

  The case when $i>k+1$: Note that $a_i=\beta_i-\alpha_{i-1}$, and for any $\xb^\ub\in G(I)$ with $u_i=a_i$, we have $u_1+\cdots+u_{i-1}=\alpha_{i-1}$.
This implies $I(B)=JL$ by Lemma~\ref{basic}(b). Here $J$ is generated by the monomials  $x_1^{u_1}\cdots x_{i-1}^{u_{i-1}}$  such that the $u_1,\ldots,u_{i-1}$ satisfy $0\leq u_1\leq b_1,\ldots,0\leq u_k\leq b_k$, $\alpha_{k+1}\leq u_1+\cdots+u_{k+1}\leq \beta_{k+1}, \ldots, u_1+\cdots+u_{i-1}=\alpha_{i-1}$,  and $L$ is generated by the monomials $x_{i+1}^{u_{i+1}}\cdots x_n^{u_n}$  such that the $u_{i+1},\ldots,u_n$ satisfy $\max\{\alpha_{i+1}-\beta_{i},0\}\leq u_{i+1}\leq \beta_{i+1}-\beta_{i},\ldots, u_{i+1}+\cdots+u_n=d-\beta_i$. It follows that $I(A)=J(A)L(A)$. Since  $\ell\in A$ and $j\in A$,  $A$ contains $\mathrm{supp}(L(A))$ and $\mathrm{supp}(J(A))$  properly. By this fact and since $J(A)$ and $L(A)$ have disjoint supports, we have  $\mathrm{depth}\ S(A)/I(A) \neq 0$ by Lemma~\ref{H}(a) and so $P_A\notin \mathrm{Ass}(S/I)$, a contradiction again.
\end{proof}

\begin{Corollary} \label{possible}
 If $P_A\in \mathrm{Ass}(S/I)$, then  either $A$ is a  subinterval of $[k+2,n]$, or  $A=B\cup [k+1,t]$ for some $B\subseteq [1,k]$ and some $t\geq k+1$.
\end{Corollary}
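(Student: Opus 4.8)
The plan is to combine Lemma~\ref{ass} with one elementary observation that Lemma~\ref{ass} itself does not supply. Assume $P_A\in\mathrm{Ass}(S/I)$ and write $A=B\cup A'$ with $B=A\cap[1,k]$ and $A'=A\cap[k+1,n]$ (a disjoint union). The first step is to rule out $A\subseteq[1,k]$, i.e. $A'=\emptyset$. Recall from the proof of Proposition~\ref{depthleft} the generator $\xb^{\vb}\in G(I)$ with $v_1=\cdots=v_k=0$, so $\supp(\vb)\subseteq[k+1,n]$. If $A\subseteq[1,k]$, then $\supp(\vb)\cap A=\emptyset$, so the monomial localization map $S\to S(A)$ sends $\xb^{\vb}$ to $1$; hence $I(A)=S(A)$, so $S(A)/I(A)=0$ and in particular $\mathrm{depth}\,S(A)/I(A)\neq 0$, and Proposition~\ref{basic}(a) gives $P_A\notin\mathrm{Ass}(S/I)$, a contradiction. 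Thus $A'\neq\emptyset$, and every element of $A'$ is $\geq k+1$.

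Next I would show $A'$ is an interval. Suppose not: choose $s<i<t$ with $s,t\in A'$ and $i\notin A'$. Since $i>s\geq k+1$ we have $i\in[k+1,n]$, hence $i\notin A$, and $i\geq k+1$. Applying Lemma~\ref{ass} to this $i$ forces $[1,i-1]\cap A=\emptyset$ or $[i+1,n]\cap A=\emptyset$; but $s\in[1,i-1]\cap A$ and $t\in[i+1,n]\cap A$, a contradiction. Therefore $A'=[\min A',\max A']$.

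Finally I would split according to whether $k+1\in A$. If $k+1\notin A$: apply Lemma~\ref{ass} with $i=k+1$; the alternative $[k+2,n]\cap A=\emptyset$ would give $A'=\emptyset$, already excluded, so $[1,k]\cap A=\emptyset$, i.e. $B=\emptyset$; combined with $k+1\notin A$ this yields $A=A'\subseteq[k+2,n]$, which is a subinterval of $[k+2,n]$ by the previous step. If $k+1\in A$: then $k+1\in A'$, so the interval $A'$ has left endpoint $k+1$, i.e. $A'=[k+1,t]$ with $t=\max A'\geq k+1$, and hence $A=B\cup[k+1,t]$ with $B\subseteq[1,k]$. This proves Corollary~\ref{possible}.

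Given Lemma~\ref{ass} the argument is essentially routine bookkeeping; the one step where Lemma~\ref{ass} is useless, and hence the main (minor) obstacle, is excluding $A\subseteq[1,k]$, for which I expect to need the explicit generator supported on $[k+1,n]$ (produced already in the proof of Proposition~\ref{depthleft}) together with the localization criterion of Proposition~\ref{basic}(a). I would also verify the boundary cases — singletons, $A'=\{k+1\}$, $B=\emptyset$ — but anticipate no difficulty there.
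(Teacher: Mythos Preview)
Your proof is correct and follows essentially the same approach as the paper: use Lemma~\ref{ass} to force $A\cap[k+1,n]$ to be an interval, exclude $A\subseteq[1,k]$ by showing $I(A)=S(A)$, and then split on whether $k+1\in A$. The only cosmetic difference is that, to rule out $A\subseteq[1,k]$, you exhibit the explicit generator $\xb^{\vb}$ from the proof of Proposition~\ref{depthleft}, whereas the paper reuses the observation $\mathrm{supp}\,I([1,n]\setminus\{k+1\})\subseteq[k+2,n]$ from the proof of Lemma~\ref{ass}; both routes yield $I(A)=S(A)$ and hence $P_A\notin\mathrm{Ass}(S/I)$.
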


\begin{proof} Let $P_A\in \mathrm{Ass}(S/I)$. In view of Lemma~\ref{ass}, we have $A\cap [k+1,n]$ is either empty or an interval. We claim that $A\cap [k+1,n]\neq \emptyset$. For this, let $B$ be a subset of $[1,k]$.   As  seen in the proof of Lemma~\ref{ass},  $\mathrm{supp}(I([1,n]\setminus \{k+1\}))\subseteq [k+2,n]$. It follows that $\mathrm{supp}(I(B))\subseteq [k+2,n]$, which implies particularly  $P_B\notin \mathrm{Ass}(S/I)$. This proves the claim.

Let $j$ be the smallest integer in $A\cap [k+1,n]$. If $j\geq k+2$, then $k+1\notin A$ and it follows that $A\cap [1,k]=\emptyset$ by Lemma~\ref{ass}, and so $A$ an interval of $[k+2,n]$.
If $j=k+1$, then $A\cap [k+1,n]=[k+1,t]$ for some $t\geq k+1$ and so $A=B\cup [k+1,t]$ and some $B\subseteq [1,k]$.
\end{proof}

\begin{Proposition} \label{ass1} Let $\emptyset \neq B\subseteq [1,k]$ and $t\in [k+1,n]$. Then   $P_{B\cup [k+1,t]}\in \mathrm{Ass}(S/I)$ if and only if
\[
\sum_{j\in \overline{B}}b_j <\min\{\alpha_t,\beta_{k+1}\},\quad\alpha_{t-1}<\alpha_t
\]
and
\[ \alpha_{i}<\beta_{i},\text{ for all $i=k+1,\ldots, t-1$.}
\]
Here $\overline{B}:=[k]\setminus B$ and $\alpha_k:=0.$
\end{Proposition}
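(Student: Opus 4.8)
The plan is to use the criterion from Proposition~\ref{basic}(a): $P_{B\cup[k+1,t]}\in\Ass(S/I)$ if and only if $\depth S(A)/I(A)=0$, where $A=B\cup[k+1,t]$. So the first step is to compute the monomial localization $I(A)$ explicitly. Writing $\overline{B}=[k]\setminus B$, I would obtain $I(A)$ by iterating Proposition~\ref{basic}(b): localizing away each variable $x_j$ with $j\in\overline B$ forces $u_j$ to its maximal value $b_j$ (these variables are "free" in the sense that $u_j$ can range up to $b_j$ independently), and localizing away each variable $x_j$ with $j>t$ forces $u_1+\cdots+u_t$ up to its maximum, which by the shape of the inequalities and the fact that $\alpha_n=\beta_n=d$ turns out to be $\min\{\alpha_t,\beta_{k+1}\}$ (here one uses that for $i\le k$ the only upper bound on the partial sums is $\beta_{k+1}$, while for $i\ge k+1$ it is $\beta_i\ge\beta_{k+1}$, and that $\alpha_t$ is the smallest value the partial sum $u_1+\cdots+u_t$ can be forced to equal when the tail is maximized). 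After this reduction, $I(A)$ becomes an ideal in $S(A)=K[x_j: j\in A]$ generated by monomials $\prod_{j\in A}x_j^{u_j}$ where the exponents satisfy: $\sum_{j\in B}u_j$ is bounded above by $\min\{\alpha_t,\beta_{k+1}\}-\sum_{j\in\overline B}b_j$, together with the chain of inequalities $\max\{\alpha_i-\sum_{j\in\overline B\cap[1,?]}b_j,\ 0\}\le (\text{partial sum})\le \ldots$ for $k+1\le i\le t$, and the total degree fixed. The key point I want to extract from this computation is that $I(A)$ is, after relabelling, essentially a PLP-polymatroidal ideal on the ground set $A$ with last index $t$ playing the role of $n$.

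The second step is to apply the depth-zero criterion. Since the variables indexed by $B$ sit at the "left end" and only contribute through the single bound $\sum_{j\in B}u_j\le \min\{\alpha_t,\beta_{k+1}\}-\sum_{j\in\overline B}b_j$, and the variables $x_{k+1},\ldots,x_t$ are governed by the partial-sum inequalities, I expect the localized ideal to split into a situation governed by formula~(\ref{nu}): I would show that $\depth S(A)/I(A)=0$ is equivalent to the existence of a generator $\xb^\ub$ of $I(A)$ with $N(\ub)$ maximal, i.e.\ with $u_j<$ (its allowed max) and (partial sum through $j$) $<$ (its allowed max) for every $j\in A$ except the last. Translating these strict inequalities back through the reduction of Step 1 gives precisely the three stated conditions: $\sum_{j\in\overline B}b_j<\min\{\alpha_t,\beta_{k+1}\}$ (so that the $B$-block has slack), $\alpha_i<\beta_i$ for $k+1\le i\le t-1$ (so that each intermediate partial sum has slack), and $\alpha_{t-1}<\alpha_t$ (so that the partial sum at $t-1$ can be made strictly less than its forced value $\alpha_t$ at the top — this is the analogue of the condition $\alpha_{t-1}<\alpha_t$ appearing in Proposition~\ref{lattice}). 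For the converse, given these inequalities I construct the witness vector explicitly, analogous to the vector $\vb=(\alpha_1,\alpha_2-\alpha_1,\dots)$ used in the proof of Proposition~\ref{simple}: put $u_j=0$ for $j\in B$ when possible, distribute the required mass $\sum_{j\in\overline B}b_j$ among the $\overline B$-variables at their caps, and set the $x_{k+1},\dots,x_t$ exponents so that the partial sum through $k+i$ equals $\alpha_{k+i}$ for each $i$; the strict inequalities guarantee this vector is a generator realizing $N(\ub)=|A|-1$.

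The main obstacle I anticipate is the bookkeeping in Step 1: correctly identifying the upper bound on $u_1+\cdots+u_t$ after localizing away $\{t+1,\ldots,n\}$ as exactly $\min\{\alpha_t,\beta_{k+1}\}$, and making sure that the two "blocks" ($B$ versus $[k+1,t]$) genuinely decouple in the way needed to read off the depth-zero condition. A subtle point is that when $\alpha_t\le\beta_{k+1}$ the $B$-block bound is $\alpha_t-\sum_{j\in\overline B}b_j$ and when $\alpha_t>\beta_{k+1}$ it is $\beta_{k+1}-\sum_{j\in\overline B}b_j$, which is why the condition is phrased with $\min\{\alpha_t,\beta_{k+1}\}$; I would handle this uniformly rather than splitting into cases. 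Once the localization is pinned down, the depth-zero analysis is a direct application of Proposition~\ref{nu1} (in the form of Lemma~\ref{quo}) to the localized ideal, exactly as in the proof of Proposition~\ref{lattice}, so the remaining work is routine.
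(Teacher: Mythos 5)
The overall strategy — localize to $A=B\cup[k+1,t]$ via iterated applications of Proposition~\ref{basic}(b), then read off $\depth S(A)/I(A)=0$ from the formula~(\ref{nu}) — is indeed the route the paper takes. But there is a genuine gap in your Step~1, and it is exactly the point the paper spends most of the proof on.

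You assert that ``localizing away each variable $x_j$ with $j\in\overline B$ forces $u_j$ to its maximal value $b_j$,'' on the grounds that those variables are free to range up to $b_j$. That is false in general. Already in $I([1,t])$ the exponent $u_j$ ($j\le k$) is bounded not only by $b_j$ but also by the partial-sum cap $u_1+\cdots+u_{k+1}\le\min\{\alpha_t,\beta_{k+1}\}$; and as you remove the variables of $\overline B$ one at a time, fixing each removed exponent at its (current) maximum, the remaining cap shrinks. When $\sum_{j\in\overline B}b_j\ge\min\{\alpha_t,\beta_{k+1}\}$, at some stage the ``maximum of $u_j$'' is strictly less than $b_j$, your formula for $I(A)$ has nonpositive upper bounds and simply does not describe the localization, and your analysis via (\ref{nu}) never gets off the ground. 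This is not a cosmetic case split that can be handled ``uniformly'': it is precisely the situation where $P_A\notin\Ass(S/I)$, and a separate argument is required. The paper handles it by induction on $|\overline B|$, showing that once $\sum_{j\in\overline B}b_j\ge\min\{\alpha_t,\beta_{k+1}\}$, eventually $k+1$ drops out of $\supp(I(A))$ (or one reduces to a strictly smaller $\overline B$), so the depth of the localization is positive. Your proposal has no substitute for that step, so the ``only if'' direction is incomplete.

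Two smaller points: (i) after localizing away $\{t+1,\ldots,n\}$, the partial sum $u_1+\cdots+u_t$ is forced to equal $\alpha_t$ (since the upper bound becomes $\min\{\alpha_t,\beta_t\}=\alpha_t$), not bounded above by $\min\{\alpha_t,\beta_{k+1}\}$; the quantity $\min\{\alpha_t,\beta_{k+1}\}$ is the cap on the partial sum through index $k+1$. (ii) In the regular case $\sum_{j\in\overline B}b_j<\min\{\alpha_t,\beta_{k+1}\}$, the two blocks do not decouple — the exponents over $B$ enter the same running partial sums as those over $[k+1,t]$ — but $I(A)$ is then visibly a left PLP-polymatroidal ideal on the relabelled ground set $A$, so one applies Proposition~\ref{depthleft} (as the paper does), which already packages the $N(\ub)$-computation you sketch. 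Your witness-vector construction for the ``if'' direction is essentially the one the paper's Proposition~\ref{depthleft} produces and is fine once the localization is correctly identified.
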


\begin{proof} For any $t\geq k+1$, Lemma~\ref{basic}(b) implies that $I([1,t])$ is an ideal in $S([1,t])$ generated by $\xb^{\ub}$ with $\ub$ satisfying
$0\leq u_i\leq b_i$ for $i=1,\ldots,k$  and $\alpha_i\leq u_1+\cdots+u_{i}\leq \min\{\alpha_t,\beta_i\}$ for $i=k+1,\ldots,t$.
Let $B\subseteq [1,k]$. We claim that if $\sum_{j\in \overline{B}}b_j \geq \min\{\alpha_t,\beta_{k+1}\}$, then $P_{B\cup [k+1,t]}\notin \mathrm{Ass}(S/I)$. (Note that the latter is equivalent to the condition that $\depth S([1,t]\setminus  \overline{B})/I([1,t]\setminus  \overline{B})\neq 0$). For this, we use induction on $|\overline{B}|$.

If $|\overline{B}|=1$, say $\overline{B}=\{i_1\}$, then, since $\max\{u_{i_1}\:\; \mathbf{x}^\mathbf{u}\in I([1,t])\}=\min\{\alpha_t,\beta_{k+1}\}$, we have $k+1\notin \mathrm{supp}(I(B\cup [k+1,t]))$ by Lemma~\ref{basic}(b). In particular, $P_{B\cup [k+1,t]}\notin \mathrm{Ass}(S/I).$

 Suppose that $|\overline{B}|>1$. We assume there exists $i_1\in \overline{B}$ such that $b_{i_1}<\min\{\alpha_t,\beta_{k+1}\}$, because otherwise we can  argue as in the proceeding  paragraph and obtain that $P_{B\cup [k+1,t]}\notin \mathrm{Ass}(S/I)$. Then $I([1,t]\setminus \{i_1\})$ is an ideal in $S([1,t]\setminus \{i_1\})$ generated by $\xb^\ub$  with $\ub$ satisfying
$0\leq u_i\leq b_i$ for   $1\leq i\leq k$ with $i\neq i_1$, and $\max\{\alpha_i-b_{i_1},0\}\leq u_1+\cdots+u_{i}\leq \min\{\alpha_t,\beta_i\}-b_{i_1}$ for $i=k+1,\ldots,t$. Let $T=[1,t]\setminus \{i_1\}$ and  $C=B\cup \{i_1\}$. Note that $|\overline{C}|<|\overline{B}|$, where $\overline{C}=[k]\setminus C$. By induction hypothesis, $$\mathrm{depth}\  S([1,t]\setminus \overline{B})/I([1,t]\setminus \overline{B})=\mathrm{depth}\  S(T\setminus \overline{C})/I(T\setminus \overline{C})\neq 0, $$ that is, $P_{B\cup [k+1,t]}\notin \mathrm{Ass}(S/I).$ This completes the proof of our claim.

Now assume $\sum_{j\in \overline{B}}b_j <\min\{\alpha_t,\beta_{k+1}\}$. Then
$I(B\cup [k+1,t])$ is an ideal in $S(B\cup [k+1,t])$ generated by $\mathbf{x^u}$ with $\mathbf{u}$ satisfying $0\leq u_i\leq b_i$ for each $i\in B$ and
 $$\max\{\alpha_{i}-\sum_{j\in \overline{B}}b_j,0\}\leq \sum_{j\in B}u_j+u_{k+1}+\cdots+u_i\leq \min\{\alpha_t, \beta_{i}\}-\sum_{j\in \overline{B}}b_j$$ for $i=k+1,\ldots,t$.

 In view of Proposition~\ref{depthleft}, we see that $\depth\ S(B\cup [k+1,t])/I(B\cup [k+1,t])=0$ if and only if $\alpha_i<\min\{\alpha_t,\beta_i\}$ for $i=k+1,\ldots,t-1$.  This is the case if and only if $\alpha_{i}<\beta_{i}$  for $i=k+1,\ldots, t-1$ and  $\alpha_{t-1}<\alpha_t$, as required.
 \end{proof}

\begin{Proposition} \label{ass2} For any interval $[s, t]\subseteq [k+2,n]$, we have $P_{[s,t]}\in \mathrm{Ass}(S/I)$ if and only if $\alpha_i<\beta_i$ for $i=s,\cdots,t-1$, $\beta_{s-1}<\beta_s$, $\alpha_{t-1}<\alpha_t$ and $\beta_{s-1}<\alpha_t.$
\end{Proposition}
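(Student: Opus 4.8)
The plan is to reduce the statement to Proposition~\ref{lattice} by identifying a suitable monomial localization of $I$ with a lattice path polymatroidal ideal.

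First I would compute $I([k+1,n])$. Deleting the pruned variables $x_1,\dots,x_k$ one at a time and applying Proposition~\ref{basic}(b) at each step, an induction on $k$ shows that $I([k+1,n])$ is the LP-polymatroidal ideal in $K[x_{k+1},\dots,x_n]$ whose generators $\xb^{\ub}$ are exactly those satisfying
\[
\max\{\alpha_i-\mu,\,0\}\ \le\ u_{k+1}+\cdots+u_i\ \le\ \beta_i-\mu\qquad(k+1\le i\le n),
\]
where $\mu=\min\{b_1+\cdots+b_k,\ \beta_{k+1}\}$. Indeed, deleting $x_1$ from $I$ produces a left PLP-polymatroidal ideal in $x_2,\dots,x_n$ obtained by lowering every cumulative bound by $\min\{b_1,\beta_{k+1}\}$ (negative lower bounds being truncated at $0$); iterating, the accumulated shift is $\mu$, and in the regime $b_1+\cdots+b_k\ge\beta_{k+1}$ the variable $x_{k+1}$ even leaves the support, consistently with $\beta_{k+1}-\mu=0$. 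Writing $\alpha_i'=\max\{\alpha_i-\mu,0\}$ and $\beta_i'=\beta_i-\mu$ for $k+1\le i\le n$, and recording that $\mu\le\beta_{k+1}\le\beta_i$, this exhibits $I([k+1,n])$ as an LP-polymatroidal ideal of type $({\bm \alpha}',{\bm \beta}')$ with $\alpha_n'=\beta_n'=d-\mu$.

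Since $k+2\le s$ and $t\le n$, we have $[s,t]\subseteq[k+1,n]$, hence $I([s,t])=(I([k+1,n]))([s,t])$; by Proposition~\ref{basic}(a) this gives $P_{[s,t]}\in\Ass(S/I)$ if and only if $P_{[s,t]}\in\Ass\big(K[x_{k+1},\dots,x_n]/I([k+1,n])\big)$. Applying Proposition~\ref{lattice} to the LP-polymatroidal ideal $I([k+1,n])$ (all relevant indices $s-1,\dots,t$ lie in $[k+1,n]$ because $s\ge k+2$), the latter holds if and only if $\alpha_i'<\beta_i'$ for $i=s,\dots,t-1$ together with $\beta_{s-1}'<\beta_s'$, $\beta_{s-1}'<\alpha_t'$ and $\alpha_{t-1}'<\alpha_t'$. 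It then remains to translate these into the asserted inequalities, using $\mu\le\beta_{k+1}\le\beta_i$ for $i\ge k+1$. Cancelling $\mu$, the condition $\beta_{s-1}'<\beta_s'$ is exactly $\beta_{s-1}<\beta_s$, and $\beta_{s-1}'<\alpha_t'$ is exactly $\beta_{s-1}<\alpha_t$ (if $\beta_{s-1}<\alpha_t$ then $\alpha_t>\beta_{s-1}\ge\mu$, so the truncation in $\alpha_t'$ is inactive). Granting $\beta_{s-1}<\alpha_t$, hence $\alpha_t>\mu$, the condition $\alpha_{t-1}'<\alpha_t'$ becomes $\alpha_{t-1}<\alpha_t$ (when $\alpha_{t-1}<\mu$ this is automatic, since $\mu<\alpha_t$). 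Granting $\beta_{s-1}<\beta_s$, hence $\beta_i>\mu$ for $s\le i\le t-1$, the condition $\max\{\alpha_i-\mu,0\}<\beta_i-\mu$ is equivalent to $\alpha_i<\beta_i$ (when $\alpha_i<\mu$ this holds because $\mu\le\beta_{k+1}\le\beta_i$). Combining the four equivalences proves the proposition.

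The step I expect to be the main obstacle is the first one: verifying carefully that $I([k+1,n])$ has precisely the stated lattice path form, in particular bookkeeping the two regimes $b_1+\cdots+b_k<\beta_{k+1}$ and $b_1+\cdots+b_k\ge\beta_{k+1}$ (in the latter $x_{k+1}$ drops out of the support) and checking at each deletion that what remains is still a left PLP-polymatroidal ideal with the shifted data. The reduction to Proposition~\ref{lattice} and the final translation are then routine, the only point to keep tracking being that $\mu\le\beta_{k+1}$, which governs when the truncations $\max\{\,\cdot\,,0\}$ are active.
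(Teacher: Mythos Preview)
Your argument is correct, but the paper takes a shorter route. Instead of deleting the pruned variables $x_1,\dots,x_k$ one at a time and tracking the accumulated shift $\mu=\min\{b_1+\cdots+b_k,\beta_{k+1}\}$, the paper deletes the single variable $x_{k+1}$. Since a left PLP-polymatroidal ideal imposes no individual bound on $u_{k+1}$, the maximum of $u_{k+1}$ over $G(I)$ is exactly $\beta_{k+1}$, and attaining it forces $u_1=\cdots=u_k=0$. Thus a single application of Proposition~\ref{basic}(b) already shows that $I([1,n]\setminus\{k+1\})$ (and hence $I([k+2,n])$) is the LP-polymatroidal ideal on $[k+2,n]$ with every cumulative bound shifted by $\beta_{k+1}$. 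Proposition~\ref{lattice} then applies with shift $\beta_{k+1}$, and the translation back to the original $\alpha_i,\beta_i$ is immediate because $\beta_{s-1}\ge\beta_{k+1}$ kills all truncations.

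What your approach buys is an explicit description of $I([k+1,n])$ (including the variable $x_{k+1}$), at the cost of the inductive bookkeeping you flag as the main obstacle and the two regimes $b_1+\cdots+b_k\lessgtr\beta_{k+1}$. What the paper's approach buys is a one-step localization with no case split and a cleaner shift. Both reduce to Proposition~\ref{lattice}, so the final translation step is essentially the same; the paper simply arranges the localization so that this translation becomes trivial.
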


\begin{proof} Note that $I([1,n]\setminus \{k+1\})$ is an ideal in $S([1,n]\setminus \{k+1\})$ generated by $\xb^{\ub}$ with $\ub$ satisfying $u_i=0$ for $i=1,\ldots,k$,  and
\begin{eqnarray} \label{left}
\qquad \max\{\alpha_{i}-\beta_{k+1},0\}\leq u_{k+2}+\cdots+u_i\leq \beta_{i}-\beta_{k+1} \mbox{\quad for\quad} i=k+2,\ldots,n.
\end{eqnarray}
Hence $I([k+2,n])$ is an ideal in $S([k+2,n])$ generated by $\xb^\ub$ with $\ub$ satisfying the inequalities in (\ref{left}).
Now a direct application of  Proposition~\ref{lattice} yields this result.
\end{proof}

Combining the last three results,  we obtain immediately:

\begin{Theorem}  $P_A\in \mathrm{Ass}(S/I)$ if and only if either
\begin{enumerate}
\item[(a)] $A=B\cup [k+1,t]$, where $B\subseteq [1,k], t\geq k+1$ and  \[
\sum_{j\in [k]\setminus B}b_j <\min\{\alpha_t,\beta_{k+1}\},\quad\alpha_{t-1}<\alpha_t, \quad \alpha_{i}<\beta_{i}\text{ for all $i=k+1,\ldots, t-1$,}
\]
\end{enumerate}
or
\begin{enumerate}
\item[(b)] $A=[s,t]$, where $k+2\leq s\leq t\leq n$ and \[\beta_{s-1}<\beta_s,\quad \alpha_{t-1}<\alpha_t,\quad \beta_{s-1}<\alpha_t, \quad \alpha_i<\beta_i \text{ for all  $i=s,\ldots,t-1.$}
\]
\end{enumerate}
\label{howtowrite}
\end{Theorem}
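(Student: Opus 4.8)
The plan is to obtain Theorem~\ref{howtowrite} by assembling the three results just established: Corollary~\ref{possible} constrains the shape of any set $A$ with $P_A\in\mathrm{Ass}(S/I)$, while Proposition~\ref{ass1} and Proposition~\ref{ass2} then decide, for the two possible shapes, precisely which of these sets actually occur. So the proof is essentially a matching exercise, and the only real work is to check that the case division of Corollary~\ref{possible} coincides with the case division in the statement and that the numerical conditions agree term by term.

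First I would prove the ``only if'' direction. Let $P_A\in\mathrm{Ass}(S/I)$; then $A\neq\emptyset$, since otherwise $P_A=(0)$ is not prime. By Corollary~\ref{possible} there are two mutually exclusive possibilities: either $A$ is a nonempty subinterval of $[k+2,n]$, hence $A=[s,t]$ with $k+2\le s\le t\le n$, in which case Proposition~\ref{ass2} applies verbatim and returns exactly the inequalities listed in (b); or $A=B\cup[k+1,t]$ with $B\subseteq[1,k]$ and $k+1\le t\le n$, in which case Proposition~\ref{ass1} applies (with $\overline{B}=[k]\setminus B$ and the convention $\alpha_k=0$) and returns exactly the inequalities listed in (a). The two families are disjoint because membership in (b) forces $k+1\notin A$ while membership in (a) forces $k+1\in A$, and together they exhaust the candidates of Corollary~\ref{possible} since every nonempty subinterval of $[k+2,n]$ has the form $[s,t]$ with $k+2\le s\le t\le n$.

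For the ``if'' direction there is nothing further to do: the hypotheses in (a) are exactly the sufficient condition of Proposition~\ref{ass1}, and those in (b) are exactly the sufficient condition of Proposition~\ref{ass2}, so in each case $P_A\in\mathrm{Ass}(S/I)$ follows at once. I do not expect a serious obstacle; the only points needing a moment's attention are the degenerate instances, where the vacuous interval conditions ($t=k+1$ in (a), or $s=t$ in (b)) and the boundary conventions $\alpha_k=0$ and $\beta_0=0$ inherited from the earlier statements must be read consistently, together with the observation that for $t=n$ one has $\alpha_t=\alpha_n=d$ and hence $\min\{\alpha_t,\beta_{k+1}\}=\beta_{k+1}$.
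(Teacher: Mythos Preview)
Your proposal is correct and follows exactly the paper's approach: the paper simply states ``Combining last three results, we obtain'' before Theorem~\ref{howtowrite}, referring to Corollary~\ref{possible}, Proposition~\ref{ass1}, and Proposition~\ref{ass2}. Your write-up is in fact more careful than the paper's one-line justification, since you explicitly note the disjointness of the two cases via $k+1\in A$ versus $k+1\notin A$ and flag the boundary conventions.
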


\begin{Corollary}\label{astableft}  $\mathrm{astab}(I)=1$.
\end{Corollary}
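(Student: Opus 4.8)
The plan is to prove the sharper statement that $\mathrm{Ass}(S/I^m)=\mathrm{Ass}(S/I)$ for every $m\geq 1$; since $\mathrm{astab}(I)$ is by definition the least $k$ with $\mathrm{Ass}(S/I^k)=\mathrm{Ass}(S/I^\ell)$ for all $\ell\geq k$, this gives $\mathrm{astab}(I)=1$ at once. The engine is Theorem~\ref{howtowrite}, which describes $\mathrm{Ass}(S/I)$ purely in terms of the parameters $\bb,{\bm \alpha},{\bm \beta}$ (and $d$) by a list of linear inequalities, together with Proposition~\ref{power}, which says that powers of PLP-polymatroidal ideals are again PLP-polymatroidal of a rescaled type.

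First I would fix the presentation of $I$ as a left PLP-polymatroidal ideal of type $(\mathbf 0,\bb\mid{\bm \alpha},{\bm \beta})$ with distinguished index $k\in[2,n-1]$, so that $b_i\geq d$ and $\alpha_i=0$, $\beta_i=\beta_{k+1}$ in the appropriate ranges. By Proposition~\ref{power}, $I^m$ is the PLP-polymatroidal ideal of type $(\mathbf 0,m\bb\mid m{\bm \alpha},m{\bm \beta})$, whose degree is $md$. I would then check that $I^m$ is again a \emph{left} PLP-polymatroidal ideal with the \emph{same} index $k$: one has $mb_i\geq md$ for $i\geq k+1$, and $m\alpha_i=0$, $m\beta_i=m\beta_{k+1}$ for $i\leq k$, so every structural requirement of the definition is inherited, the parameters $b_i,\alpha_i,\beta_i,d$ being simply replaced by $mb_i,m\alpha_i,m\beta_i,md$.

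Next I would apply Theorem~\ref{howtowrite} to $I$ and to $I^m$ side by side. For $I$, a prime $P_A$ is associated exactly when $A$ has one of the two listed shapes and the corresponding inequalities hold; for $I^m$ the very same criterion applies, but with every occurring parameter multiplied by $m$. The key observation is that each of the defining conditions — $\sum_{j\in[k]\setminus B}b_j<\min\{\alpha_t,\beta_{k+1}\}$, $\alpha_{t-1}<\alpha_t$, $\alpha_i<\beta_i$, $\beta_{s-1}<\beta_s$, and $\beta_{s-1}<\alpha_t$ — is an inequality between $\ZZ$-linear forms in the parameters, and $\min$ commutes with multiplication by the positive integer $m$; hence each such condition is left unchanged when all parameters are scaled by $m$. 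Consequently $P_A\in\mathrm{Ass}(S/I^m)$ if and only if $P_A\in\mathrm{Ass}(S/I)$, so $\mathrm{Ass}(S/I^m)=\mathrm{Ass}(S/I)$ for all $m\geq 1$, and therefore $\mathrm{astab}(I)=1$.

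I expect the only genuinely delicate point to be the bookkeeping in the second step — confirming that $I^m$ lands back in the class of left PLP-polymatroidal ideals with the same $k$ — since Proposition~\ref{power} by itself only delivers a PLP-polymatroidal ideal of the rescaled type; once that is verified, everything reduces to the scaling remark, which is routine. An alternative route, should one prefer to avoid quoting Theorem~\ref{howtowrite}, is to argue through localization: by Proposition~\ref{basic}(a) it suffices to compare $\depth S(P)/I(P)$ with $\depth S(P)/I^m(P)$, and since monomial localization commutes with powers, $I^m(P)=\bigl(I(P)\bigr)^m$, while Propositions~\ref{depthleft} and~\ref{power} show that the depth of such a localization vanishes for $I$ precisely when it vanishes for its powers; but leaning on the explicit classification in Theorem~\ref{howtowrite} keeps the argument shortest.
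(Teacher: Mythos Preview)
Your proposal is correct and follows exactly the route the paper takes: the paper's proof is the single line ``The assertion follows from Theorem~\ref{howtowrite} together with Proposition~\ref{power},'' and you have faithfully unpacked what that line means, including the verification that $I^m$ is again a left PLP-polymatroidal ideal with the same index $k$ and that all the inequalities in Theorem~\ref{howtowrite} are homogeneous under rescaling.
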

\begin{proof}
The assertion follows from Theorem~\ref{howtowrite} together with Proposition~\ref{power}.
\end{proof}

Combining Corollary~\ref{astableft} with Corollary~\ref{dstableft} yields:

\begin{Corollary} Let $I$ be a left PLP-polymatroidal ideal. Then $\mathrm{dstab}(I)=\mathrm{astab}(I)=1$.
\end{Corollary}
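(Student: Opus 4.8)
The plan is to assemble the statement directly from the two preceding corollaries, so essentially no new work is required; the one structural fact underlying both is Proposition~\ref{power}. First I would observe that for every $m\geq 1$ the power $I^m$ is again a PLP-polymatroidal ideal, now of type $(\mathbf 0,m\bb\mid m{\bm \alpha},m{\bm \beta})$ by Proposition~\ref{power}, and that this scaled ideal is still a \emph{left} PLP-polymatroidal ideal with the same cut-off index $k$: the defining conditions $b_i\geq d$ for $i\geq k+1$ and $\alpha_i=0$, $\beta_i=\beta_{k+1}$ for $i\leq k$ are each preserved when all parameters (including $d$) are multiplied by $m$.

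For $\mathrm{dstab}(I)=1$ I would invoke Proposition~\ref{depthleft}, which gives $\depth S/I=\lvert\{k+1\leq i\leq n-1:\alpha_i=\beta_i\}\rvert$. Applying the same formula to $I^m$ with the scaled parameters, and using that $m\alpha_i=m\beta_i$ is equivalent to $\alpha_i=\beta_i$, one gets $\depth S/I^m=\depth S/I$ for all $m\geq 1$, so the depth function is constant from $m=1$ on; this is exactly Corollary~\ref{dstableft}.

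For $\mathrm{astab}(I)=1$ I would appeal to Theorem~\ref{howtowrite}, which characterizes membership $P_A\in\Ass(S/I)$ purely through \emph{strict} inequalities among the numbers $b_j$, $\alpha_i$, $\beta_i$ (together with which indices lie in $A$), such as $\sum_{j\in[k]\setminus B}b_j<\min\{\alpha_t,\beta_{k+1}\}$ or $\alpha_i<\beta_i$. Each such inequality is unchanged when all parameters are multiplied by a positive integer $m$; since $I^m$ is precisely the left PLP-polymatroidal ideal obtained by this scaling, Theorem~\ref{howtowrite} yields $\Ass(S/I^m)=\Ass(S/I)$ for all $m\geq 1$, i.e. Corollary~\ref{astableft}. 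Combining the two gives $\mathrm{dstab}(I)=\mathrm{astab}(I)=1$.

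There is essentially no obstacle here: the only point needing care is the bookkeeping in the first step — verifying that the scaled type is still of \emph{left} type and that the combinatorial data controlling depth (the equalities $\alpha_i=\beta_i$) and associated primes (the strict inequalities of Theorem~\ref{howtowrite}) are invariant under positive scaling. Once that is recorded, the conclusion is immediate from Proposition~\ref{power}, Proposition~\ref{depthleft}, and Theorem~\ref{howtowrite}.
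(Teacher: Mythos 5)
Your proposal is correct and takes essentially the same route as the paper: the paper obtains the statement by combining Corollary~\ref{dstableft} (which follows from Proposition~\ref{depthleft} together with Proposition~\ref{power}) with Corollary~\ref{astableft} (which follows from Theorem~\ref{howtowrite} together with Proposition~\ref{power}), and your write-up is exactly this argument with the scaling invariance of the equalities $\alpha_i=\beta_i$ and of the strict inequalities in Theorem~\ref{howtowrite} made explicit.
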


It is proved in \cite{HV} that every polymatroidal ideal is of {\em strong intersection  type}, that is, if $I$ is a polymatroidal ideal,  then $\bigcap_{P\in \mathrm{Ass}(S/I)}P^{d_P}$ is an irredundant primary decomposition of $I$, where $d_P$ is the degree of generators in $I(P)$. From the proofs of Propositions~\ref{ass1} and \ref{ass2}, we see that if $P_{B\cup [k+1,t]}\in \mathrm{Ass}(S/I)$  then $d_{P_{B\cup [k+1,t]}}=\alpha_t-\sum_{j\in \overline{B}}b_j$, and if $P_{[s,t]}\in \mathrm{Ass}(S/I)$ then $d_{P_{[s,t]}}=\alpha_t-\beta_{s-1}$.

\begin{Example} \em
Let $I$ be an ideal generated by $\xb^{\ub}$ such that $\ub$ satisfies $0\leq u_i\leq 2$ for $i=1,2$, $0\leq u_3\leq 3$,
 $2\leq \sum_{i=1}^4u_i\leq 4$ and  $\sum_{i=1}^5u_i=5$.
 Then $I=P_{[1,5]}^5\cap P_{[1,4]}^2\cap P_{[1,5]\setminus \{3\}}^2\cap P_{[2,5]}^3\cap P_{[1,5]\setminus \{2\}}^3\cap P_{5}$. Here $P_5:=(x_5)$.
\end{Example}

\section{The depth and the associated prime ideals of right PLP-polymatroidal ideals}

In this section we will investigate another class of PLP-polymatroidal ideals, which we call {\em right PLP-polymatroidal ideals.} A PLP-polymatroidal ideal of type $(\mathbf{0},\bb|{\bm \alpha},{\bm \beta})$ is called a right PLP-polymatroidal ideal, if  there exists $k\in [1,n-1]$ such that   $b_i\geq d$ for all $i\leq k$, and $\alpha_i=\alpha_k$ and  $\beta_i=d$ for all $k+1\leq i\leq n-1$. Hence a right PLP-polymatroidal ideal  is an ideal $I$ in $S=K[x_1,\ldots,x_n]$ generated by the monomials $\xb^{\ub}$ with $\ub$ satisfying  $$\alpha_i\leq u_1+\cdots+u_i\leq \beta_i,  \mathrm{\ for\ } i=1,\ldots, k,$$
$$0\leq u_{i}\leq b_{i}, \mbox{\ for\ } i=k+1,\ldots,n,\qquad\quad$$ and
 $$u_1+\cdots+u_n=d.\qquad\qquad\qquad $$ Here $\alpha_1\leq \cdots  \leq \alpha_k\leq d,$ $0<\beta_1\leq \cdots  \leq \beta_k\leq d,$ $\alpha_i\leq \beta_i$ for $i=1,\ldots,k$ and $b_i>0$ for $i=k+1,\ldots,n$. To ensure that $I\neq 0$ we always assume that $d\leq \beta_k+b_{k+1}+\cdots+b_n$.  Note that if $k=1$ then $I$ is of Veronese type.

\medskip
In this section, the ideal $I$ always stands for the right PLP-polymatroidal ideal as given above and we will determine the depth and the associated prime ideals of $I$ and its powers.

\medskip
In order to illustrate the results of this section, we will use the following ideal as a running example.

\begin{Running Example} {\em  Consider the following inequalities:
\begin{center}

$\left\{\begin{array}{c}
  1\leq u_1\leq 3;\\
   3\leq u_1+u_2\leq 3;  \\
  3\leq u_1+u_2+u_3\leq 4;  \\
   6\leq u_1+\cdots+u_4\leq 6; \\
  7\leq u_1+\cdots+u_5\leq 8;\\
  8\leq u_1+\cdots+u_6\leq 9;
\end{array}\right.$\qquad
and \qquad
$\left\{\begin{array}{c}
 0\leq u_7\leq 2;\\
 \quad  0\leq u_8\leq 2;\\
   u_1+\cdots+u_8=12.\\
   \end{array}\right.$

\end{center}
Then all the vectors $\ub\in \ZZ_+^8$ satisfying all the inequalities above form the set of bases of a right PLP-polymatroid.  Let $I'$ denote its polymatroidal ideal. Thus,  $I'$ is a monomial ideal of the polynomial ring $S':=K[x_1,\ldots,x_8]$. }
\end{Running Example}

 We begin with a characterization of  the ideal $I$ for which  $\depth  S/I=0$.

\begin{Lemma} \label{right1} $\depth S/I=0$ if and only if $d\leq \beta_k+\mathbf{b}([k+1,n])-n+k$ and $\alpha_i<\beta_i$ for $i=1,\ldots,k$.
\end{Lemma}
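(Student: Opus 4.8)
The plan is to read off everything from Proposition~\ref{nu1}. By that result, $\depth S/I=0$ if and only if $I$ has a minimal generator $\xb^{\ub}$ with $N(\ub)=n-1$, i.e.\ with $u_i<b_i$ \emph{and} $u_1+\cdots+u_i<\beta_i$ for every $i=1,\dots,n-1$. The first reduction I would make is to simplify this condition for a right PLP-polymatroidal ideal. For $i\le k$ we have $\beta_i\le d\le b_i$, and since $u_i\le u_1+\cdots+u_i$, the inequality $u_1+\cdots+u_i<\beta_i$ already forces $u_i<b_i$; so for these indices the only real requirement is $u_1+\cdots+u_i<\beta_i$. For $k+1\le i\le n-1$ we have $\beta_i=d$, so $u_1+\cdots+u_i<\beta_i$ just says $u_{i+1}+\cdots+u_n\ge 1$, which (taking $i=n-1$) is equivalent to $u_n\ge 1$. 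Hence $\depth S/I=0$ iff there is a generator $\xb^{\ub}$ with $u_1+\cdots+u_i<\beta_i$ for $i=1,\dots,k$, with $u_i<b_i$ for $k+1\le i\le n-1$, and with $u_n\ge 1$ (the last two conditions being vacuous when $k=n-1$).

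For the ``only if'' direction, I would take such a generator $\ub$ and argue directly. From $\alpha_i\le u_1+\cdots+u_i<\beta_i$ for $i\le k$ (the lower bound being part of the definition of $I$) we get $\alpha_i<\beta_i$ for $i=1,\dots,k$. Moreover $u_1+\cdots+u_k\le\beta_k-1$, while $u_i\le b_i-1$ for $k+1\le i\le n-1$ and $u_n\le b_n$. Summing these,
\[
d=u_1+\cdots+u_n\le(\beta_k-1)+\sum_{i=k+1}^{n-1}(b_i-1)+b_n=\beta_k+\mathbf{b}([k+1,n])-n+k,
\]
which is the desired numerical inequality.

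For the ``if'' direction I would construct a generator explicitly. Working with the prefix sums $s_i:=u_1+\cdots+u_i$, set $s_i=\beta_i-1$ for $i=1,\dots,k$; because $\alpha_i<\beta_i$, the $\beta_i$ are nondecreasing, and $\beta_1\ge 1$, this is realized by nonnegative $u_1,\dots,u_k$ satisfying $\alpha_i\le s_i<\beta_i$ and $u_i\le s_i<\beta_i\le b_i$, so far as the constraints of $I$ are concerned. It remains to pick $s_k\le s_{k+1}\le\cdots\le s_{n-1}\le d-1$ with increments $s_i-s_{i-1}\in[0,b_i-1]$ for $k+1\le i\le n-1$ and with $s_{n-1}\ge d-b_n$, so that $u_n:=d-s_{n-1}$ satisfies $1\le u_n\le b_n$. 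The hypothesis $d\le\beta_k+\mathbf{b}([k+1,n])-n+k$ is exactly what makes the interval $[\,\max\{d-b_n,\beta_k-1\},\ \min\{d-1,\ \beta_k-1+\sum_{i=k+1}^{n-1}(b_i-1)\}\,]$ of admissible values for $s_{n-1}$ nonempty; one then fills in $s_{k+1},\dots,s_{n-2}$ greedily within the increment bounds. The resulting $\ub$ is a minimal generator of $I$ with $N(\ub)=n-1$, so $\depth S/I=0$ by Proposition~\ref{nu1}.

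The substantive content is thus spread over Proposition~\ref{nu1} and this reduction; the only part needing genuine care is the bookkeeping in the construction above, in particular the asymmetry of the index $n$ (whose bound is $u_n\le b_n$, not $u_n\le b_n-1$) and the degenerate case $k=n-1$, where $\{k+1,\dots,n-1\}=\emptyset$, $s_{n-1}=\beta_{n-1}-1$ is forced, and the displayed condition collapses to $d\le\beta_{n-1}+b_n-1$ — which is again precisely the hypothesis, so the argument goes through unchanged. I expect this edge-case accounting, rather than any conceptual point, to be the main obstacle.
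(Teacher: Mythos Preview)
Your proof is correct and follows essentially the same approach as the paper's: both invoke Proposition~\ref{nu1}, deduce the numerical inequality from a generator $\ub$ with $N(\ub)=n-1$, and for the converse construct such a generator by setting the first $k$ partial sums equal to $\beta_i-1$ and distributing the remaining mass $d-\beta_k+1$ among $u_{k+1},\dots,u_n$ subject to $u_i\le b_i-1$ for $i<n$ and $u_n\le b_n$. Your explicit reduction of the condition $N(\ub)=n-1$ (in particular isolating the requirement $u_n\ge 1$) and your interval analysis for $s_{n-1}$ are a bit more careful than the paper's version, which simply asserts the existence of suitable $c_{k+1},\dots,c_n$ without spelling out that one may take $c_n\ge 1$; but the content is the same.
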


\begin{proof} Suppose that $\depth S/I=0$. Then, by Proposition~\ref{nu1}, there is  ${\bold x}^{\ub}\in G(I)$ such that $N(\ub)=n-1$ (see Equation (\ref{nu}) for the definition of $N(\ub)$). This implies that   $u_1+\cdots+u_k\leq \beta_k-1$ and  $u_i\leq b_i-1$ for $i=k+1,\ldots,n-1$.  Therefore we have $d=u_1+\cdots+u_n\leq \beta_k-1+b_{k+1}-1+\cdots+b_{n-1}-1+b_n=\beta_k+\mathbf{b}([k+1,n])-n+k$.  Since $u_1+\cdots+u_i\leq \beta_i-1$, we have $\alpha_i<\beta_i$ for $i=1,\ldots,k$.

  Conversely, since $0<d-\beta_k+1\leq (b_{k+1}-1)+\cdots+(b_{n-1}-1)+b_n$, there exist integers $c_{k+1},\ldots,c_n$ such that $c_{k+1}+\cdots+c_n=d-\beta_k+1$, $0\leq c_i\leq b_i-1$ for $i=k+1,\ldots,n-1$ and $c_n\leq b_n$. Let $\ub=(\beta_1-1,\beta_2-\beta_1,\ldots,\beta_k-\beta_{k-1},c_{k+1},\ldots,c_n)$. Then $\xb^{\ub}\in G(I)$ and  $N(\ub)=n-1$, which implies $\depth S/I=0.$\end{proof}

\begin{Lemma}  \label{depthright1} Suppose that $\alpha_i<\beta_i$ for $i=1,\ldots,k$. Then $$\depth S/I=\max\{0, d- \beta_k-\mathbf{b}([k+1,n])+n-k\}.$$
\end{Lemma}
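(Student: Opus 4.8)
The plan is to invoke Proposition~\ref{nu1}, which reduces the problem to computing $m:=\max\{N(\ub):\xb^{\ub}\in G(I)\}$, since then $\depth S/I=n-1-m$. Write $R:=\mathbf{b}([k+1,n])$ and $s:=\beta_k+R-d$; note $s\geq 0$, because $I\neq 0$ forces $d\leq\beta_k+R$. The goal is to show $m=\min\{n-1,\,k-1+s\}$, which gives $\depth S/I=n-1-\min\{n-1,k-1+s\}=\max\{0,\,n-k-s\}=\max\{0,\,d-\beta_k-\mathbf{b}([k+1,n])+n-k\}$, the asserted formula. It is convenient to split, for a generator $\xb^{\ub}$, the count $N(\ub)=N_1(\ub)+N_2(\ub)$ according to whether the contributing index $i$ has $i\leq k$ or $k+1\leq i\leq n-1$. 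For $i\leq k$ one has $b_i\geq d\geq\beta_i>u_1+\cdots+u_i\geq u_i$ whenever $u_1+\cdots+u_i<\beta_i$, so the condition $u_i<b_i$ is then automatic; hence $N_1(\ub)=|\{1\leq i\leq k: u_1+\cdots+u_i<\beta_i\}|$. For $k+1\leq i\leq n-1$ we have $\beta_i=d$, so $u_1+\cdots+u_i<\beta_i$ means $u_{i+1}+\cdots+u_n>0$, and $N_2(\ub)=|\{k+1\leq i\leq n-1: u_i<b_i,\ u_{i+1}+\cdots+u_n>0\}|$.

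For the upper bound $m\leq\min\{n-1,k-1+s\}$, fix a generator and set $\sigma:=u_1+\cdots+u_k$, so $0\leq\sigma\leq\beta_k$, and let $M:=u_{k+1}+\cdots+u_n=d-\sigma$. Each index counted by $N_2$ has $u_i<b_i$, so $N_2(\ub)$ is at most the number of $i\in[k+1,n]$ with $u_i<b_i$, which is at most $\sum_{i=k+1}^n(b_i-u_i)=R-M=R+\sigma-d$. If $\sigma\leq\beta_k-1$ then $N_1(\ub)\leq k$, hence $N(\ub)\leq k+(R+\beta_k-1-d)=k-1+s$. If $\sigma=\beta_k$ then the index $i=k$ does not contribute to $N_1$, so $N_1(\ub)\leq k-1$ and $N_2(\ub)\leq R+\beta_k-d=s$, so again $N(\ub)\leq k-1+s$. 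Together with the trivial bound $N(\ub)\leq n-1$ this gives the upper bound.

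For the lower bound, if $s\geq n-k$ then $d\leq\beta_k+R-n+k$, so $\depth S/I=0$ by Lemma~\ref{right1} (its hypothesis $\alpha_i<\beta_i$ holds by assumption), which agrees with $\max\{0,n-k-s\}=0$. So assume $s\leq n-k-1$ and exhibit a generator with $N=k-1+s$. If $s\geq1$, take $\ub$ with $u_1+\cdots+u_i=\beta_i-1$ for $1\leq i\leq k$ — a legitimate head since $\alpha_i\leq\beta_i-1$ for all $i\leq k$, the sequence $\beta_i-1$ is non-decreasing, and $\beta_k-1\geq d-R$ — together with $u_i=b_i$ for $k+1\leq i\leq n-s$, $u_i=b_i-1$ for $n-s+1\leq i\leq n-1$, and $u_n=b_n$. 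Here $n-s\geq k+1$ by the case assumption; the degree on the last $n-k$ variables is $R-(s-1)=d-(\beta_k-1)$, and the partial sums $u_1+\cdots+u_i$ for $k+1\leq i\leq n-1$ are at most $d-b_n\leq d$, so $\xb^{\ub}\in G(I)$; one has $N_1(\ub)=k$, while the only non-full indices in $[k+1,n-1]$ are $n-s+1,\dots,n-1$, each with positive tail since $u_n=b_n>0$, so $N_2(\ub)=s-1$ and $N(\ub)=k-1+s$. If $s=0$, then $M\leq R$ forces $\sigma\geq d-R=\beta_k$, so $\sigma=\beta_k$ for every generator; taking $u_1+\cdots+u_i=\beta_i-1$ for $i<k$, $u_1+\cdots+u_k=\beta_k$, and $u_i=b_i$ for $i>k$ yields a generator with $N_1=k-1$, $N_2=0$, hence $N=k-1=k-1+s$. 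This proves $m=\min\{n-1,k-1+s\}$ and hence the lemma.

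I expect the routine but error-prone part to be verifying that the vector produced in the lower-bound step really is a base of the PLP-polymatroid — that is, that all the inequalities of Definition~\ref{defpruned} hold, that its modulus equals $d$, and that its value of $N$ is exactly $k-1+s$ rather than accidentally larger — together with correctly treating the small cases $s=0$ and $s=1$, in which the block of almost-full variables degenerates.
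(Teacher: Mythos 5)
Your proof is correct and follows essentially the same route as the paper's: reduce via Proposition~\ref{nu1} to maximizing $N(\ub)$, split the count into the contributions from $i\leq k$ (where $u_i<b_i$ is automatic) and from $k+1\leq i\leq n-1$, use the degree constraint to bound the tail contribution, and exhibit an explicit extremal generator for the lower bound (the paper's constructions for the cases $j=k-1$, $j=k$, $j\geq k+1$ correspond precisely to your $s=0$, $s=1$, $s\geq 2$, differing only in where the block of entries $b_i-1$ is placed within $[k+1,n-1]$). Your case split on $\sigma\leq\beta_k-1$ versus $\sigma=\beta_k$ is a cosmetic variant of the paper's split on $j_1=k$ versus $j_1<k$, and both give the same bound $N(\ub)\leq k-1+s$.
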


\begin{proof}  Let $t=d-\beta_k-\bb([k+1,n])+n-k$.   The case when  $t\leq 0$ follows from Lemma~\ref{right1}. Assume that $t>0$. Then we set $j=n-1-t$. Note that $j\geq k-1$, since $d\leq \beta_k+\mathbf{b}([k+1,n])$. There are several cases to consider.

\vspace{2mm}
If $j=k-1$, we let $\mathbf{u}=(\beta_1-1,\beta_2-\beta_1,\ldots,\beta_k-\beta_{k-1}+1, b_{k+1},\ldots, b_n).$  Then $\mathbf{x}^\mathbf{u}\in G(I)$ and $N(\mathbf{u})=k-1$. It follows that $\depth S/I\leq n-k=t$, see Proposition~\ref{nu1}.

\vspace{2mm}
 If $j=k$, we let   $\mathbf{u}=(\beta_1-1,\beta_2-\beta_1,\ldots,\beta_k-\beta_{k-1}, b_{k+1},\ldots,b_n).$ Then $\mathbf{x}^\mathbf{u}\in G(I)$ and $N(\mathbf{u})=k$. It follows that $\depth S/I\leq n-k-1=t$ by Proposition~\ref{nu1} again.

\vspace{2mm}
  If $j\geq k+1$, we let
  $\mathbf{u}=(\beta_1-1,\beta_2-\beta_1,\ldots,\beta_k-\beta_{k-1}, b_{k+1}-1,\ldots,b_j-1,b_{j+1},\ldots,b_n).$ Then $\mathbf{u}\in G(I)$ and $N(\mathbf{u})=j$. Hence $\depth S/I\leq n-1-j=t$.

\vspace{2mm}
  It remains to be shown that $N(\mathbf{u})\leq j$ if  $\mathbf{x}^\mathbf{u}\in G(I)$. Fix $\mathbf{u}$ with $\mathbf{x}^\mathbf{u}\in G(I)$. We set $$j_1=|\{i\:\; u_1+\cdots+u_i<\beta_i, 1\leq i\leq k\}|$$
  and $$j_2=|\{i\:\;u_i<b_i, k+1\leq i\leq n-1\}|.$$ Then $N(\mathbf{u})=j_1+j_2$.  If $j_1=k$, then $u_1+\cdots+u_k\leq \beta_k-1$ and so $u_{k+1}+\cdots+u_n\geq d-\beta_k+1$. On the other hand, $u_{k+1}+\cdots+u_n\leq \bb([k+1,n])-j_2$. This  implies that $d-\beta_k+1\leq \mathbf{b}([k+1,n])-j_2$, and so $j\leq n-k-t-1$. Therefore we have $N(\mathbf{u})=k+j_2\leq n-1-t=j$.

If $j_1<k$, then $d=u_1+\cdots+u_n\leq  \beta_k+ \bb([k+1,n])-j_2$,  and so $N(\mathbf{u})\leq j_2+k-1\leq n-1-t=j$. This completes the proof.
\end{proof}

\begin{Discussion}
 \label{product}
 \em We now consider the general case in which we do not require the strict inequalities $\alpha_i<\beta_i$ for $i=1,\ldots,k$. For this,  let $i_1<\cdots<i_s$ be such that $\{1\leq i\leq k\:\;\alpha_i=\beta_i\}=\{i_1,\ldots,i_s\}$. We then  observe that  $I$ can be write as the product of ideals $I_1,\ldots, I_s$ and  $J$ which are given as follows. The ideal  $I_1$ is generated by $x_1^{u_1}\cdots x_{i_1}^{u_{i_1}}$ with $u_1,\ldots,u_{i_1}$ satisfying  $\alpha_i\leq u_1+\cdots+u_{i}\leq \beta_{i}$ for $i=1,\ldots,i_1$. For $1\leq t\leq s-1$, the ideal $I_{t+1}$ is  generated by $$x_{i_t+1}^{u_{i_t+1}}\cdots x_{i_{(t+1)}}^{u_{i_{(t+1)}}}$$ with $u_{i_t+1},\ldots, u_{i_{(t+1)}}$ satisfying $$\alpha_{i}-\beta_{i_t}\leq u_{i_t+1}+\cdots+u_i\leq \beta_{i}-\beta_{i_t}$$ for $i=i_t+1,\ldots,i_{(t+1)}$.

If $i_s=k$, that is,  if $\alpha_k=\beta_k$,  then the ideal $J$ is generated by $x_{k+1}^{u_{k+1}}\cdots x_n^{u_n}$ with $u_{k+1},\ldots,u_n$ satisfying  $0\leq u_{k+1}\leq b_{k+1}, \ldots, 0\leq u_n\leq b_n, u_{k+1}+\cdots+u_n=d-\beta_k$. In this case $J$ is of Veronese type.

If $i_s<k$, that is, if $\alpha_k\neq \beta_k$, then the  ideal $J$ is generated by $x_{i_s+1}^{u_{i_s+1}}\cdots x_n^{u_n}$ with $u_{i_s+1},\ldots,u_n$ satisfying
$$\alpha_{i}-\beta_{i_s}\leq u_{i_s+1}+\cdots+u_i\leq \beta_{i}-\beta_{i_s}\mbox{\ for\ } i=i_s+1,\ldots,k,$$
$$ 0\leq u_{k+1}\leq b_{k+1}, \ldots, 0\leq u_n\leq b_n,$$
 and
$$u_{i_s+1}+\ldots+u_{k+1}+\cdots+u_n=d-\beta_{i_s}.$$
Note that  these ideals $I_1,\ldots, I_s, J$ have pairwise disjoint supports.

Moreover  if we let $S_t$ be the polynomial ring in variables $x_i$ with $i\in \mathrm{supp}(I_t)$ for $1\leq t\leq s$, then $\depth S_t/(I_t\cap S_t)=0$ by Proposition~\ref{simple}.

\end{Discussion}

Let $a\in\RR$. We denote by $\lfloor a \rfloor$ the lower integer part and by $\lceil a\rceil$ the upper integer part of $a$.

\begin{Theorem} \label{depth2} Let $s=|\{i\:\; \alpha_i=\beta_i,1\leq i\leq k\}|$. Then $$\depth S/I=s+\max\{0, d- \beta_k-\bb([k+1,n])+n-k-\lfloor\frac{\alpha_k}{\beta_k}\rfloor\}.$$
\end{Theorem}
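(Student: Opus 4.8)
The plan is to compute $\depth S/I$ by first splitting $I$ into a product of polymatroidal ideals with pairwise disjoint supports, as in Discussion~\ref{product}, and then evaluating the one surviving nontrivial factor either recursively through Lemma~\ref{depthright1} or directly through the depth formula attached to~(\ref{nu}), according to whether $\alpha_k<\beta_k$ or $\alpha_k=\beta_k$. Throughout, the role of the correction term $\lfloor\alpha_k/\beta_k\rfloor$ is simply that, since $0\le\alpha_k\le\beta_k$, it equals $0$ in the first case and $1$ in the second, so it just bookkeeps which case we are in.

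First I would invoke Discussion~\ref{product}: writing $\{1\le i\le k:\alpha_i=\beta_i\}=\{i_1<\cdots<i_s\}$, we get a factorization $I=I_1\cdots I_sJ$ into monomial ideals with pairwise disjoint supports, where each $I_t$ is a lattice path polymatroidal ideal all of whose consecutive inequalities are strict except at the last coordinate of its block, so $\depth S_t/(I_t\cap S_t)=0$ by Proposition~\ref{simple}. Applying Lemma~\ref{H}(a) to the $s+1$ ideals $I_1,\dots,I_s,J$ then yields
$$\depth S/I \;=\; s+\depth S_J/(J\cap S_J),$$
where $S_J$ is the polynomial ring on the variables occurring in $J$ (when $s=0$ one takes $J=I$ and this is already Lemma~\ref{depthright1}). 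Everything reduces to computing $\depth S_J/(J\cap S_J)$.

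Case $\alpha_k<\beta_k$, so $k\notin\{i_1,\dots,i_s\}$ and $\lfloor\alpha_k/\beta_k\rfloor=0$: here $J$ is again a right PLP-polymatroidal ideal, now in the variables $x_{i_s+1},\dots,x_n$, with head length $k-i_s$, rank $d-\beta_{i_s}$, the same upper bounds $b_{k+1},\dots,b_n$, and — crucially — all partial-sum inequalities strict, because $\beta_{i_s}=\alpha_{i_s}\le\alpha_i<\beta_i$ for $i_s<i\le k$ forces $\max\{\alpha_i-\beta_{i_s},0\}<\beta_i-\beta_{i_s}$. Then Lemma~\ref{depthright1} applies to $J$ and, substituting the shifted parameters and using $(n-i_s)-(k-i_s)=n-k$, gives $\depth S_J/(J\cap S_J)=\max\{0,\,d-\beta_k-\bb([k+1,n])+n-k\}$, which is the asserted value. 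In the remaining case $\alpha_k=\beta_k$ (so $i_s=k$, $\lfloor\alpha_k/\beta_k\rfloor=1$) the ideal $J$ is of Veronese type on $x_{k+1},\dots,x_n$, generated in degree $d-\beta_k$ with upper bounds $b_{k+1},\dots,b_n$; for such a $J$ the quantity in~(\ref{nu}) becomes $N(\ub)=|\{k+1\le i\le n-1:u_i<b_i,\ u_{k+1}+\cdots+u_i<d-\beta_k\}|$, and by Proposition~\ref{nu1} I must find $\max_{\ub}N(\ub)$. I would compute this by a "saturate the last variable" argument: taking $u_n=b_n$ (legitimate precisely in the nontrivial range) and spreading the remaining degree over $x_{k+1},\dots,x_{n-1}$ so as to leave as few coordinates as possible at their bound forces exactly $\max\{0,\,(d-\beta_k)-\bb([k+1,n])+n-k-1\}$ of them saturated, with $u_{k+1}+\cdots+u_i<d-\beta_k$ for all $i\le n-1$ since $u_n>0$; and one checks no other exponent vector does better (in particular none with $u_n=0$, which only shortens the effective index range). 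Hence $\max_{\ub}N(\ub)=\min\{n-k-1,\ \bb([k+1,n])-(d-\beta_k)\}$ and $\depth S_J/(J\cap S_J)=\max\{0,\,d-\beta_k-\bb([k+1,n])+n-k-1\}$, again the asserted value; combining the two cases with the displayed reduction completes the proof.

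The main obstacle is the combinatorial optimization in the Veronese-type case: one must pin down $\max_{\ub}N(\ub)$ exactly — both attainability of the stated value and a counting/averaging argument over the bounded simplex of exponent vectors showing it cannot be exceeded. The remaining work — verifying that the tail $J$ of the first case genuinely is a right PLP-polymatroidal ideal meeting the strict-inequality hypothesis of Lemma~\ref{depthright1}, and that the parameter substitution reproduces $d-\beta_k-\bb([k+1,n])+n-k$ on the nose — is routine bookkeeping.
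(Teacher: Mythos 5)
Your proof is correct and follows the same route as the paper: factor $I=I_1\cdots I_s J$ into disjoint-support ideals as in Discussion~\ref{product}, invoke Lemma~\ref{H}(a) together with the fact that each $\depth S_t/(S_t\cap I_t)=0$ to reduce to $\depth S/I = s + \depth S_J/(J\cap S_J)$, and then compute the depth of the surviving factor $J$. The only deviation is in the Veronese case $\alpha_k=\beta_k$: you carry out the optimization of $N(\ub)$ directly from Proposition~\ref{nu1}, whereas the paper handles this uniformly by viewing $J$ as a right PLP-polymatroidal ideal with head of length $1$ (setting $\alpha'_1=0$, $\beta'_1=\min\{b_{k+1},d-\beta_k\}$, so the strict-inequality hypothesis holds) and applying Lemma~\ref{depthright1} again, also noting it matches \cite[Corollary 4.7]{HRV}. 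Your explicit computation works and is self-contained, but it duplicates the combinatorial argument already buried in the proof of Lemma~\ref{depthright1}; the paper's route avoids re-deriving that bound at the cost of a small reinterpretation. For the case $\alpha_k<\beta_k$ your verification that the shifted parameters of $J$ satisfy the strict-inequality hypothesis of Lemma~\ref{depthright1} (using $\beta_{i_s}=\alpha_{i_s}\le\alpha_i<\beta_i$) is exactly the needed check.
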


\begin{proof} Let $R=K[x_{i_s+1},\ldots,x_n]$. By Discussion~\ref{product} and Lemma~\ref{H}(a), we have $\depth S/I=s+\depth R/R\cap J$.  If $\alpha_k=\beta_k$, then $\depth R/R\cap J=\max\{0, d- \beta_k-\bb([k+1,n])+n-k-1\}$  by Lemma~\ref{depthright1} (or by \cite[Corollary 4.7]{HRV}). If $\alpha_k<\beta_k$, then $\depth R/R\cap J=\max\{0, d- \beta_k-\bb([k+1,n])+n-k\}$  by Lemma~\ref{depthright1}. Combining two cases yields our formula.
\end{proof}

\begin{Corollary}\label{dstabright}
\begin{enumerate}

\item[(a)] If $d=\beta_k+\bb([k+1,n])$, then $\mathrm{dstab}(I)=1$.
\item[(b)] Set $\delta=\lfloor\frac{\alpha_k}{\beta_k}\rfloor$.  If $d<\beta_k+\bb([k+1,n])$, then $\mathrm{dstab}(I)=\lceil \frac{n-k-\delta}{\beta_k+\bb([k+1,n])-d} \rceil$.

 \end{enumerate}
\end{Corollary}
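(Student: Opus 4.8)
The plan is to pin down the powers $I^{m}$ explicitly and then feed them into the depth formula of Theorem~\ref{depth2}. Throughout, write $\delta=\lfloor\alpha_k/\beta_k\rfloor$ and $s=|\{i\:\;\alpha_i=\beta_i,\ 1\le i\le k\}|$ as in that theorem, and let $m\ge 1$. By Proposition~\ref{power}, $I^{m}$ is the PLP-polymatroidal ideal of type $(\mathbf{0},m\bb\mid m{\bm \alpha},m{\bm \beta})$. I would first check that scaling all the data by $m$ keeps us inside the class of right PLP-polymatroidal ideals with the same $k$: the relations $mb_i\ge md$ for $i\le k$, $m\alpha_i=m\alpha_k$ and $m\beta_i=md$ for $k+1\le i\le n-1$, the monotonicity chains, and the bounds $0\le m\alpha_i\le m\beta_i$ all persist, and the nonvanishing hypothesis $d\le\beta_k+\bb([k+1,n])$ becomes $md\le m\beta_k+m\bb([k+1,n])$, so $I^{m}\neq 0$. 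Hence Theorem~\ref{depth2} applies to $I^{m}$.

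Next I would record how the quantities in Theorem~\ref{depth2} transform under this scaling. The set $\{i\:\;\alpha_i=\beta_i\}$ is unchanged when $(\alpha_i,\beta_i)$ is replaced by $(m\alpha_i,m\beta_i)$, so the value of $s$ for $I^{m}$ is the same; likewise $\lfloor m\alpha_k/(m\beta_k)\rfloor=\delta$; and $md-m\beta_k-m\bb([k+1,n])=m\bigl(d-\beta_k-\bb([k+1,n])\bigr)$. Plugging into Theorem~\ref{depth2},
\[
\depth S/I^{m}=s+\max\bigl\{0,\ (n-k-\delta)-m\bigl(\beta_k+\bb([k+1,n])-d\bigr)\bigr\}
\]
for all $m\ge 1$. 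Note that $\delta\in\{0,1\}$ since $0\le\alpha_k\le\beta_k$ and $\beta_k>0$, and $n-k\ge 1$ since $k\le n-1$; in particular $n-k-\delta\ge 0$.

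For part (a), when $d=\beta_k+\bb([k+1,n])$ the coefficient of $m$ vanishes, so $\depth S/I^{m}=s+(n-k-\delta)$ is independent of $m$, and therefore $\mathrm{dstab}(I)=1$. For part (b), put $c=\beta_k+\bb([k+1,n])-d\ge 1$; then $\depth S/I^{m}=s+\max\{0,(n-k-\delta)-mc\}$ is a nonincreasing function of $m$, strictly decreasing while $(n-k-\delta)-mc>0$ and equal to its eventual constant value $s$ exactly once $mc\ge n-k-\delta$. Consequently $\mathrm{dstab}(I)$ is the least $m$ with $mc\ge n-k-\delta$, i.e.\ $\lceil(n-k-\delta)/c\rceil=\lceil\frac{n-k-\delta}{\beta_k+\bb([k+1,n])-d}\rceil$.

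The real content lies in the bookkeeping of the first two paragraphs: confirming that $I^{m}$ is again a right PLP-polymatroidal ideal so that Theorem~\ref{depth2} applies, and that $s$ and $\delta$ are scale-invariant while $d-\beta_k-\bb([k+1,n])$ scales linearly; granting this, both statements are immediate. One degenerate point to flag is $n-k-\delta=0$, which forces $k=n-1$ and $\alpha_{n-1}=\beta_{n-1}$ (so $s\ge 1$): there $\depth S/I^{m}=s$ already at $m=1$ and $\mathrm{dstab}(I)=1$, so the ceiling in (b) must be read as $\max\{1,\lceil\cdot\rceil\}$ in that case.
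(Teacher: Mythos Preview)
Your argument is correct and, for part (b), is essentially the paper's own: both apply Proposition~\ref{power} to recognize $I^m$ as a right PLP-polymatroidal ideal with all parameters scaled by $m$, feed this into Theorem~\ref{depth2}, and solve for the least $m$ making the second term in the $\max$ nonpositive. Your remark on the degenerate case $n-k-\delta=0$ (which forces $k=n-1$ and $\alpha_{n-1}=\beta_{n-1}$) is apt; the paper's formula tacitly assumes $n-k-\delta\ge 1$ in (b) as well.

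For part (a) you take a different and shorter route. The paper does \emph{not} appeal to Theorem~\ref{depth2} here; instead it observes that when $d=\beta_k+\bb([k+1,n])$ one has the factorization $I=x_{k+1}^{b_{k+1}}\cdots x_n^{b_n}\,L$ with $L$ a lattice path polymatroidal ideal in $K[x_1,\dots,x_k]$, then uses Lemma~\ref{H}(a) together with $\mathrm{dstab}(S_1\cap L)=1$ (via Proposition~\ref{lattice} and Proposition~\ref{power}) to conclude. Your method---simply noting that the $m$-dependent term in Theorem~\ref{depth2} vanishes when $d=\beta_k+\bb([k+1,n])$, so the depth is constant---is more economical and avoids the factorization entirely. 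The paper's approach, on the other hand, yields a structural decomposition that is reused verbatim in the proof of Theorem~\ref{astabright}(a) for $\mathrm{astab}$.
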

\begin{proof}
(a) If  $d=\beta_k+\bb([k+1,n])$, then $I=x_{k+1}^{b_{k+1}}\cdots x_n^{b_n}L$, where $L$ is generated by $x_1^{u_1}\cdots x_k^{u_k}$ such that $\alpha_i\leq u_1+\cdots+u_i\leq \beta_i$ for $i=1,\ldots,k-1$ and $u_1+\cdots+u_k=\beta_k$. Let $S_1=K[x_1,\ldots,x_k]$ and $S_2=K[x_{k+1},\ldots,x_n]$. By Lemme~\ref{H} and since $\depth S_2/(x_{k+1}^{b_{k+1}}\cdots x_n^{b_n})^m =n-k-1$ for all integer $m>0$, we have
 \[
 \depth S/I^m=\depth S_1/(S_1\cap L)^m+n-k.
 \]
 Note that $S_1\cap L$ is a LP-polymatroidal ideal. Therefore, by Proposition~\ref{lattice}, we have $\mathrm{dstab} (S_1\cap L)=1$, and so $\mathrm{dstab}(I)=1$.
\medskip

(b) If $d<\beta_k+\bb([k+1,n])$, then  $\depth S/I^m=s$ for all  $m\gg 0$, by Theorem~\ref{depth2}. It follows that $\mathrm{dstab}(I)$ is the smallest integer $i$ such that $i(d-\beta_k-\bb([k+1,n]))+n-k-\delta\leq 0$, which certainly  is $\lceil \frac{n-k-\delta}{\beta_k+\bb([k+1,n])-d} \rceil$.
\end{proof}

\begin{Running Example} {\em We continue the running example.

\noindent Let $I_1'$ be the ideal generated by monomials
$x_1^{u_1}x_2^{u_2}$ with $1\leq u_1\leq 3$ and $u_1+u_2=3$;

\noindent Let $I_2'$ be the ideal generated by monomials
$x_3^{u_3}x_4^{u_4}$ with $0\leq u_3\leq 1$ and $u_3+u_4=3$;

\noindent Let $J'$ be the ideal generated by monomials
$x_5^{u_5}\cdots x_8^{u_8}$ with $1\leq u_5\leq 2$ and $2\leq u_5+u_6\leq 3$; $0\leq u_7\leq 2$, $0\leq u_8\leq 2$, $u_5+\cdots+u_8=6$.

Then $I'=I'_1I'_2J'$ and $(I')^m=(I'_1)^m(I'_2)^m(J')^m$ for any $m>0$. Note that $(I'_i)^m$ is a LP-polymatroidal ideal for each $i=1,2$ and each $m>0$. Hence, by Lemma~\ref{H}(a) and Lemma~\ref{depthright1}, we have
$$\depth S'/(I')^m=2+\depth R'/(R'\cap J')=2+\max\{0,2-m\}.$$
 Here $R':=K[x_5,\ldots,x_8]$.  In particular, $$\mathrm{dstab}(I')=2.$$

 To decide the value of  $\mathrm{astab}(I')$, we only need to describe the associated prime ideals of $S'/J'$ and determine    $\mathrm{astab}(J')$, see Lemma~\ref{H}(b). This is what we do from Lemma~\ref{6.8} to Theorem~\ref{astabright}. }
\end{Running Example}

\begin{Lemma} \label{6.8}  Let $P_A\in \mathrm{Ass}(S/I)$. If there is $1\leq i\leq k$ such that $i\notin A$, then either $A\cap [1,i-1]=\emptyset$ or $A\cap [i+1,n]=\emptyset$.
\end{Lemma}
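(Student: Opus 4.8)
The plan is to mirror the proof of Lemma~\ref{ass}, exploiting the analogous ``product structure'' that appears when one monomially localizes a right PLP-polymatroidal ideal at a prime missing a variable $x_i$ with $1\le i\le k$. Suppose for contradiction that there exist indices $\ell<i<j$ with $\ell\in A$, $j\in A$ but $i\notin A$. Set $B=[1,n]\setminus\{i\}$, so that $A\subseteq B$, and let $a_i=\max\{u_i:\xb^{\ub}\in G(I)\}$. By Proposition~\ref{basic}(b), $I(B)$ is again a polymatroidal ideal, generated by $\xb^{\ub}/x_i^{a_i}$ over those $\xb^{\ub}\in G(I)$ with $u_i=a_i$. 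The key point is to show that those generators force a factorization $I(B)=JL$ into two ideals with disjoint supports, one supported in $[1,i-1]\cap B$ and the other in $[i+1,n]$. Since $A$ contains an index $\ell$ on the left and an index $j$ on the right, $A$ would then contain $\supp(J(A))$ and $\supp(L(A))$ each properly, and Lemma~\ref{H}(a) gives $\depth S(A)/I(A)=\depth S(A)/(J(A)L(A))\ge 1$, contradicting $P_A\in\Ass(S/I)$ via Proposition~\ref{basic}(a).

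It remains to establish the factorization, and here one splits into cases according to where $i$ sits relative to $k$. If $i\le k$, then $a_i=\max\{u_i:\xb^\ub\in G(I)\}$ is determined by the chain inequalities $\alpha_\bullet\le u_1+\cdots+u_\bullet\le\beta_\bullet$; concretely $a_i=\beta_i-\alpha_{i-1}$ (with $\alpha_0=0$), and every $\xb^\ub\in G(I)$ attaining $u_i=a_i$ must have $u_1+\cdots+u_{i-1}=\alpha_{i-1}$ and $u_1+\cdots+u_i=\beta_i$. Hence in $I(B)$ the ``prefix part'' on variables $x_1,\dots,x_{i-1}$ decouples completely from the ``suffix part'' on $x_{i+1},\dots,x_n$: the former is generated by the $x_1^{u_1}\cdots x_{i-1}^{u_{i-1}}$ with $\alpha_t\le u_1+\cdots+u_t\le\beta_t$ for $t<i$ and $u_1+\cdots+u_{i-1}=\alpha_{i-1}$, while the latter is a right-PLP-type ideal in $x_{i+1},\dots,x_n$ with the shifted bounds (the chain conditions for $i<t\le k$ become $\max\{\alpha_t-\beta_i,0\}\le u_{i+1}+\cdots+u_t\le\beta_t-\beta_i$, and the box conditions $0\le u_t\le b_t$ for $t\ge k+1$ survive unchanged, with total degree $d-\beta_i$). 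One should note the degenerate subcase $i=k$ separately only in bookkeeping; the decoupling argument is the same. Thus $I(B)=JL$ with $\supp J\subseteq[1,i-1]$ and $\supp L\subseteq[i+1,n]$, as needed.

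The main obstacle I expect is precisely this case analysis and the verification that $a_i$ is realized \emph{only} by generators with the extremal prefix sum, so that the localized ideal genuinely factors rather than merely contains a product; one must be careful that the constraint $u_1+\cdots+u_{i-1}=\alpha_{i-1}$ (resp.\ $u_1+\cdots+u_i=\beta_i$) really holds for all generators with $u_i=a_i$, which uses the monotonicity $\alpha_1\le\cdots\le\alpha_k$, $\beta_1\le\cdots\le\beta_k$ and the fact that $\beta_{i-1}\le\alpha_{i-1}+a_i$ cannot be exceeded. (If $a_i$ can be achieved with slack in the prefix sum, the factorization still goes through after localizing once more at $\{i\}$, exactly as in the inductive reduction used in the proof of Proposition~\ref{ass1}.) Once the factorization is in hand, the contradiction is immediate from Lemma~\ref{H}(a), Proposition~\ref{basic}(a), and the hypothesis that $A$ meets both $[1,i-1]$ and $[i+1,n]$. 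This is exactly the pattern already executed in Lemma~\ref{ass} for left PLP-polymatroidal ideals, so the write-up can be kept brief by referring back to that argument.
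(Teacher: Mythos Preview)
Your proposal is correct and follows exactly the approach the paper intends: the paper's own proof consists of the single sentence ``by using a similar argument as in Lemma~\ref{ass} one obtains'' the result, and you have supplied precisely that similar argument. Two small remarks: the hedging about ``slack in the prefix sum'' is unnecessary, since $u_i=a_i=\beta_i-\alpha_{i-1}$ together with $u_1+\cdots+u_{i-1}\ge\alpha_{i-1}$ and $u_1+\cdots+u_i\le\beta_i$ \emph{forces} both extremal equalities; and the degenerate subcase to flag is not $i=k$ but rather $\alpha_{i-1}=0$ (including the trivial case $i=1$), which makes $J=(1)$ so that $\supp I(B)\subseteq[i+1,n]$ and $x_\ell$ is already a free variable---this is the exact analogue of the case $i=k+1$ in Lemma~\ref{ass}.
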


As an immediate consequence we obtain

\begin{Corollary} \label{asspossible} Let $P_A\in \mathrm{Ass}(S/I)$. Then  $A$ is an interval contained in $[1,k]$, or  $A$ is a subset of $[k+1,n]$, or  $A=[s,k]\cup B$ for some $s\leq k$ and some subset $B$ of $[k+1,n]$.
\end{Corollary}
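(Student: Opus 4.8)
The plan is to derive the statement from the preceding lemma by a direct combinatorial case analysis on $A$, entirely parallel to the proof of Corollary~\ref{possible} with the roles of the two ends of $[n]$ interchanged. Write $A_0 = A \cap [1,k]$ and $B = A \cap [k+1,n]$, so that $A = A_0 \cup B$; it will suffice to prove that $A_0$ is an interval and that, unless $A_0$ reaches the endpoint $k$, the set $B$ must be empty.

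First I would show that $A_0$ is an interval. Suppose $\ell < j$ both lie in $A_0$ and $\ell < i < j$; then $1 \leq i \leq k$, so if $i \notin A$ the preceding lemma forces $A \cap [1,i-1] = \emptyset$ or $A \cap [i+1,n] = \emptyset$, contradicting $\ell \in A \cap [1,i-1]$ in the first case and $j \in A \cap [i+1,n]$ in the second; hence $i \in A_0$, and therefore $A_0$ is an interval $[s,t]$ (or empty). Now split into cases. If $A_0 = \emptyset$, then $A = B \subseteq [k+1,n]$. If $t = k$, then $A = [s,k] \cup B$ with $B \subseteq [k+1,n]$. If $t < k$, apply the preceding lemma with $i = t+1$: this index satisfies $1 \leq i \leq k$ and $i \notin A$ (it exceeds $t = \max A_0$ but lies in $[1,k]$), while $A \cap [1,t] \supseteq [s,t] \neq \emptyset$, so the lemma yields $A \cap [t+2,n] = \emptyset$; since $t < k$ forces $t+2 \leq k+1$, this gives $B = \emptyset$ and hence $A = [s,t] \subseteq [1,k]$. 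These three possibilities are precisely the three alternatives in the statement.

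I do not expect a genuine obstacle: all the substantive work is already contained in the preceding lemma, and what remains is a routine ``no straddling across a missing index'' argument. The only point requiring attention is the bookkeeping in the case $t < k$ — verifying that $t+1$ is an admissible index for the lemma ($1 \leq t+1 \leq k$) and that $t+2 \leq k+1$, so that the conclusion $A \cap [t+2,n] = \emptyset$ genuinely eliminates all of $A \cap [k+1,n]$. This is the exact analogue, with the two ends of $[n]$ swapped, of how Corollary~\ref{possible} rules out the degenerate subsets in the left PLP-polymatroidal case.
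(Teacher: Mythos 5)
Your proof is correct and is exactly the intended argument: the paper presents the corollary as ``an immediate consequence'' of the preceding lemma, and your case analysis (show $A\cap[1,k]$ is an interval by ruling out a straddled missing index; if that interval stops short of $k$, apply the lemma at the first missing index to annihilate $A\cap[k+1,n]$) is the natural way to make that deduction explicit, mirroring the paper's written proof of Corollary~\ref{possible} with the two ends of $[n]$ interchanged. The bookkeeping you flag — $1\le t+1\le k$ when $t<k$, and $t+2\le k+1$ so that $[k+1,n]\subseteq[t+2,n]$ — is handled correctly.
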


\begin{Proposition} \label{assright} Set $\alpha_0=\beta_0=0$. Suppose that $\alpha_i<\beta_i$ for $i=1,\ldots,k$.   Given $1\leq s\leq t\leq  k$ and  a nonempty subset $B$ of $[k+1,n]$, we have
\begin{enumerate}

\item[(a)]The following statements are equivalent:
 \begin{enumerate}
\item[(1)] $P_{[s,t]}\in \mathrm{Ass}(S/I)$.
\vspace{1mm}

\item[(2)] $\alpha_k\leq d-\bb([k+1,n])\leq \beta_k$  and  $(\beta_{s-1}-\alpha_t)(\beta_{s-1}-\beta_s)(\alpha_{t-1}-\alpha_t)<0$.
\end{enumerate}

\vspace{2mm}
\item[(b)] The following statements are equivalent:
\begin{enumerate}
\item[(1)] $P_B\in \mathrm{Ass}(S/I)$.
\vspace{1mm}

\item[(2)] $\bb([k+1,n]\setminus B)+\beta_k <d\leq \beta_k+\bb([k+1,n])-|B|$.

\end{enumerate}

\vspace{2mm}

 \item[(c)] The following statements are equivalent:
\begin{enumerate}
  \item[(1)]
  $P_{[s,k]\cup B}\in \mathrm{Ass}(S/I)$.
  \vspace{1mm}

  \item[(2)] $\bb([k+1,n]\setminus B)+\max\{\beta_{s-1},\alpha_k\}< d\leq \beta_k+\bb([k+1,n])-|B|$ and $\beta_{s-1}<\beta_s$.
 \end{enumerate}

 \vspace{2mm}
\end{enumerate}
 \noindent Moreover, $\mathrm{Ass}(S/I)$ consists exactly  of these monomial prime ideals which satisfy one of the conditions described in {\em(a)}, {\em (b)} and \em{ (c)}.
\end{Proposition}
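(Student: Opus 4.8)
The plan is to apply the localization criterion of Proposition~\ref{basic}(a), namely $P_A\in\Ass(S/I)\iff\depth S(A)/I(A)=0$, to the three possible shapes of $A$ left open by Corollary~\ref{asspossible}: an interval $[s,t]\subseteq[1,k]$ (this is case (a)), a subset $B\subseteq[k+1,n]$ (case (b)), and a union $A=[s,k]\cup B$ with $s\le k$ and $B\subseteq[k+1,n]$ (case (c)). For each such $A$ I would compute $I(A)\subseteq S(A)$ explicitly, peeling off one variable at a time via Proposition~\ref{basic}(b), recognize the outcome as a PLP-polymatroidal ideal of a type whose depth-zero behaviour is already recorded, and read off the stated inequalities. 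As cases (a)--(c) cover every candidate, the final ``moreover'' assertion is then immediate.

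For case (a): peeling off $x_n,\dots,x_{k+1}$ successively pins the cumulative sum $u_1+\cdots+u_k$ to the value $w_0:=\max\{\alpha_k,\;d-\bb([k+1,n])\}$, so $I([1,k])$ becomes an LP-polymatroidal ideal in the variables $x_1,\dots,x_k$ --- of the type obtained from $({\bm \alpha},{\bm \beta})$ by capping each $\beta_i$ at $w_0$ and fixing the top cumulative sum at $w_0$. Then $I([s,t])=I([1,k])([s,t])$, and Proposition~\ref{lattice}, together with the description of localizations of LP-polymatroidal ideals recalled just before it, furnishes a depth-zero criterion. Untangling the $\min$/$\max$-with-$0$ corrections, and using that $w_0=d-\bb([k+1,n])$ exactly when $\alpha_k\le d-\bb([k+1,n])$ and that $\beta_{s-1}\le\beta_s$, $\alpha_{t-1}\le\alpha_t$ always hold, one recovers the two conditions of (a)(2); the sign condition on the triple product there is just a compact way of recording $\beta_{s-1}<\alpha_t$, $\beta_{s-1}<\beta_s$ and $\alpha_{t-1}<\alpha_t$ simultaneously.

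For case (b): peeling off $x_k,\dots,x_1$ turns $I([k+1,n])$ into a Veronese-type ideal in $x_{k+1},\dots,x_n$, and peeling off the $x_j$ with $j\in[k+1,n]\setminus B$ keeps it Veronese type with the total degree lowered by the corresponding $b_j$'s; its depth is $0$ precisely when every surviving box bound is strict, which --- via Lemma~\ref{right1} specialized to the case $k=1$, equivalently \cite[Corollary~4.7]{HRV} --- gives the two-sided bound of (b)(2) once the degree shifts are tallied. For case (c): peeling off $x_1,\dots,x_{s-1}$ shifts the partial-sum window for the indices $s,\dots,k$ to $[\max\{\alpha_i-\beta_{s-1},0\},\;\beta_i-\beta_{s-1}]$, and peeling off the $x_j$ with $j\in[k+1,n]\setminus B$ drops the total degree by $\bb([k+1,n]\setminus B)$, so $I([s,k]\cup B)$ is again a right PLP-polymatroidal ideal on the ground set $[s,k]\cup B$; Lemma~\ref{right1} applied to it, after reindexing, gives (c)(2), the reformed strict inequalities $\max\{\alpha_i-\beta_{s-1},0\}<\beta_i-\beta_{s-1}$ collapsing --- because $\alpha_i<\beta_i$ and the $\alpha$'s and $\beta$'s are nondecreasing --- to the single extra condition $\beta_{s-1}<\beta_s$.

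The main obstacle is precisely this localization bookkeeping. One has to check that peeling off the box-variables $x_{k+1},\dots,x_n$ really does pin (or cleanly shift) the cumulative sum $u_1+\cdots+u_k$ as claimed, so that at every stage the localized ideal stays inside the PLP-family and Proposition~\ref{lattice}, Lemma~\ref{right1} and the Veronese depth formula are genuinely applicable; and then that the $\min$/$\max$-with-$0$ adjustments to the reindexed ${\bm \alpha}$'s and ${\bm \beta}$'s simplify to exactly the strict inequalities listed in (a)--(c). Some extra care is needed at the extreme value $d=\beta_k+\bb([k+1,n])$, where $I$ factors as $x_{k+1}^{b_{k+1}}\cdots x_n^{b_n}L$ and the cleanest route is through Lemma~\ref{H}.
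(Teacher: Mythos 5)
Your overall route -- localize $I$ at the candidate prime, peel off variables via Proposition~\ref{basic}(b), recognize the localization as a polymatroidal ideal of a type already analyzed, and read off the depth-zero criterion from Proposition~\ref{lattice} or Lemma~\ref{right1} -- is the paper's own route, and your formula $w_0=\max\{\alpha_k,\,d-\bb([k+1,n])\}$ for the fixed degree of $I([1,k])$ is the correct one (the paper's proof simply asserts this degree equals $d-\bb([k+1,n])$, which is wrong whenever $\alpha_k>d-\bb([k+1,n])$). But then you assert, without carrying out the computation, that ``one recovers the two conditions of (a)(2).'' If you do carry it out, the clause $\alpha_k\le d-\bb([k+1,n])$ does \emph{not} emerge. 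Applying Proposition~\ref{lattice} to the LP-polymatroidal ideal $I([1,k])$ of type $(\alpha',\beta')$ with $\alpha'_i=\alpha_i$, $\beta'_i=\min\{w_0,\beta_i\}$ for $i<k$ and $\alpha'_k=\beta'_k=w_0$, one finds (for $t<k$) exactly $\beta_{s-1}<\alpha_t$, $\beta_{s-1}<\beta_s$, $\alpha_{t-1}<\alpha_t$, and (for $t=k$) $\beta_{s-1}<w_0$, $\alpha_{k-1}<w_0$, $\beta_{s-1}<\beta_s$. When $\alpha_k>d-\bb([k+1,n])$ one simply gets $w_0=\alpha_k$, a perfectly non-trivial localization, and $P_{[s,t]}$ can be associated.

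This is the step that would fail, and it exposes a genuine error in the stated proposition. Take $n=3$, $k=2$, ${\bm\alpha}=(0,2)$, ${\bm\beta}=(2,3)$, $b_3=2$, $d=3$; then $\alpha_i<\beta_i$ for $i=1,2$ and $d\le\beta_2+b_3$, so all hypotheses hold, and $I=(x_2^3,\,x_1x_2^2,\,x_1^2x_2,\,x_2^2x_3,\,x_1x_2x_3,\,x_1^2x_3)$. One checks $I:(x_2x_3)=(x_1,x_2)$, so $P_{[1,2]}\in\Ass(S/I)$. Yet $\alpha_2=2>1=d-b_3$, so (a)(2) fails, and so does (c)(2) for $s=1$, $B=\emptyset$ (the other shape matching $A=[1,2]$); the ``moreover'' clause of the proposition is therefore violated. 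The root cause, both in the paper's proof and in your proposal, is the unexamined assumption that peeling off $x_{k+1},\dots,x_n$ pins $u_1+\cdots+u_k$ to $d-\bb([k+1,n])$; it pins it to $w_0$, and when $w_0=\alpha_k>d-\bb([k+1,n])$ the localized ideal does not vanish and the analysis must proceed with $w_0$ in place of $d-\bb([k+1,n])$. Your more careful formula for $w_0$ was the right instinct; it should have led you to challenge (a)(2) rather than to claim it drops out.
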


 \begin{proof}
(a) Note that  $I([1,k])$ is generated by monomials $\xb^{\ub}\in S([1,k])$ with $\ub$ satisfying  $\alpha_1\leq u_1\leq \beta_1, \ldots, \alpha_k\leq u_1+\cdots+u_k\leq \beta_k,$ and $u_1+\cdots+u_k=d-\bb([k+1,n])$. Hence $I([1,k])\neq 0$ if and only if $\alpha_k\leq d-\bb([k+1,n])\leq \beta_k$. If this is the case, then $I([1,k])$ is the LP-polymatroidal ideal generated by monomials $\xb^{\ub}\in S([1,k])$ with $\ub$ satisfying
\[
\alpha_i\leq u_1+\cdots+u_i\leq \min\{d-\bb([k+1,n]),\beta_i\}
\]
for $i=1,\ldots,k-1$, and
\[
u_1+\cdots+u_k=d-\bb([k+1,n]).
\]

Set $d_1=d-\bb([k+1,n]$ and apply Proposition~\ref{lattice} to this case, we see that $P_{[s,t]}\in \mathrm{Ass}(S/I)$ if and only if $\alpha_k\leq d_1\leq \beta_k$, $\alpha_{t-1}<\alpha_t$, $\min\{d_1, \beta_{s-1}\}<\alpha_t$, $\min\{d_1,\beta_{s-1}\}<\min\{d_1,\beta_{s}\}$ and $\alpha_i<\min\{d_1,\beta_i\}$ for $i=s,\ldots,t-1$, which is equivalent to $\alpha_k\leq d_1\leq \beta_k$, $\beta_{s-1}<\beta_s$, $\beta_{s-1}<\alpha_t$ and $\alpha_{t-1}<\alpha_t$, since $\alpha_t\leq \alpha_k$. Now our result follows.
\vspace{2mm}

(b) By using Proposition~\ref{basic}(b) repeatedly, we have   $I([k+1,n])$ is generated by monomials $\xb^\ub\in S([k+1,n])$  with $\ub$ satisfying $0\leq u_i\leq b_i$ for $i=k+1,\ldots,n$ and $u_{k+1}+\cdots+u_n=d-\beta_k$. In a similar way as in Proposition~\ref{ass1}, we see that if $d-\beta_k-\bb([k+1,n]\setminus B)\leq 0$ then $P_B\notin \mathrm{Ass}(S/I)$, moreover if
$d-\beta_k-\bb([k+1,n]\setminus B)> 0$, then
$I(B)$ is generated by monomials $\xb^\ub\in S(B)$  with $\ub$ satisfying
\[ 0\leq u_i\leq b_i, \text{ for all $i\in B$}\quad  \text{and}\quad  \sum_{i\in B} u_i=d-\beta_k-\bb([k+1,n]\setminus B).
\]
Applying Lemma~\ref{right1} and Proposition~\ref{simple}(a), the result follows.

\vspace{2mm}
(c) Note that $I([s,k]\cup B)$ is generated by monomials $\xb^\ub\in S([s,k]\cup B)$ with $\ub$ satisfying
\[
\max\{\alpha_i-\beta_{s-1},0\}\leq u_s+\cdots+u_i\leq \beta_i-\beta_{s-1} \quad  \text{for $i=s,\ldots,k$}
\]
\[
0\leq u_i\leq b_i \; \text{for $i\in B$} \quad \text{and}\quad u_s+\cdots+u_k+\sum_{i\in B}u_i=d-\beta_{s-1}-\bb([k+1,n]\setminus B).
\]
 Hence $I([s,k]\cup B)\neq (0)$ if and only if $d-\beta_{s-1}-\bb([k+1,n]\setminus B)>\max\{0, \alpha_k-\beta_{s-1}\}$. If this is the case, we let $\hat{d}=d-\beta_{s-1}-\bb([k+1,n]\setminus B).$  We  describe $I([s,k]\cup B)$ in the form as given in the beginning of this section: $I([s,k]\cup B)$ is generated by monomials $\xb^\ub\in S([s,k]\cup B)$ with $\ub$ satisfying
\[
\max\{\alpha_i-\beta_{s-1},0\}\leq u_s+\cdots+u_i\leq \min\{\hat{d}, \beta_i-\beta_{s-1}\} \quad  \text{for $i=s,\ldots,k$}
\]
\[
0\leq u_i\leq b_i \; \text{for $i\in B$} \quad \text{and}\quad u_s+\cdots+u_k+\sum_{i\in B}u_i=\hat{d}.
\]
Note that $\max\{\alpha_i-\beta_{s-1},0\}< \min\{\hat{d}, \beta_i-\beta_{s-1}\}$ for $i=s,\ldots,k$ if and only if $\beta_{s-1}<\beta_s$. By Lemma~\ref{right1} and Proposition~\ref{simple}(a), we have $P_{[s,k]\cup B}\in \mathrm{Ass}(S/I)$ if and only if $\beta_{s-1}<\beta_s$ and $\hat{d}\leq \min\{\hat{d}, \beta_k-\beta_{s-1}\}+\bb(B)-|B|$ . One can check the last inequality is equivalent to $d\leq \beta_k+\bb([k+1,n])-|B|,$ as required.

\medskip
Finally the last statement follows from Corollary~\ref{asspossible}.
\end{proof}

Following \cite{HRV}, we use  $\mathrm{Ass}^{\infty}(S/I)$ to denote the set  of prime  ideals $P$ for which $P\in \mathrm{Ass}(S/I^m)$ for all $m\gg 0$. Since any polymatroidal ideal has the strong persistence property by \cite[Proposition 2.4]{HQ},
we see that $$\mathrm{Ass}^{\infty}(S/I)=\bigcup_{m=1}^{\infty}\mathrm{Ass}(S/I^m).$$

\begin{Proposition} \label{assright2} Suppose $\alpha_i<\beta_i$ for $i=1,\ldots,k$ and $d< \beta_k+\bb([k+1,n])$. Given $1\leq s\leq t\leq k$ and  a subset $B$  of $[k+1,n]$, we have
\begin{enumerate}
\item[(a)] $P_{[s,t]}\in \mathrm{Ass}^{\infty}(S/I)$ $\Longleftrightarrow$ $P_{[s,t]}\in \mathrm{Ass}(S/I)$.

\vspace{2mm}
\item[(b)] $P_B\in \mathrm{Ass}^{\infty}(S/I)$ $\Longleftrightarrow$  $\bb([k+1,n]\setminus B)+\beta_k <d$.
\vspace{2mm}

\item[(c)] $P_{[s,k]\cup B}\in \mathrm{Ass}^{\infty}(S/I)$ $\Longleftrightarrow$ $\bb([k+1,n]\setminus B)+\max\{\beta_{s-1},\alpha_k\}< d$ and $\beta_{s-1}<\beta_s.$

\end{enumerate}
\vspace{2mm}

\noindent Moreover, $\mathrm{Ass}^{\infty}(S/I)$ consists exactly of these monomial  prime ideals  which satisfy one of the conditions described in {\em(a)}, {\em(b)} and {\em (c)}.
\end{Proposition}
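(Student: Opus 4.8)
The plan is to reduce everything to Proposition~\ref{assright} and Corollary~\ref{asspossible} by way of Proposition~\ref{power}. Since $I$ is a right PLP-polymatroidal ideal of type $(\mathbf{0},\bb|{\bm\alpha},{\bm\beta})$, Proposition~\ref{power} shows that for every $m\geq 1$ the power $I^m$ is again a right PLP-polymatroidal ideal, with the same parameter $k$ but with $\alpha_i,\beta_i,b_i,d$ replaced by $m\alpha_i,m\beta_i,mb_i,md$. The standing hypotheses $\alpha_i<\beta_i$ and $d<\beta_k+\bb([k+1,n])$ scale to $m\alpha_i<m\beta_i$ and $md<m\beta_k+m\bb([k+1,n])$, so Proposition~\ref{assright} and Corollary~\ref{asspossible} apply verbatim to $I^m$; in particular every $P\in\mathrm{Ass}(S/I^m)$ has one of the shapes $P_{[s,t]}$ with $[s,t]\subseteq[1,k]$, $P_B$ with $B\subseteq[k+1,n]$, or $P_{[s,k]\cup B}$. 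Since there are only finitely many such primes, it suffices to decide for each fixed prime of one of these shapes whether it lies in $\mathrm{Ass}(S/I^m)$ for all large $m$.

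Next I would run the three cases through Proposition~\ref{assright} applied to $I^m$ and simply track how the inequalities behave under $(\alpha,\beta,b,d)\mapsto(m\alpha,m\beta,mb,md)$. For $P_{[s,t]}$ with $[s,t]\subseteq[1,k]$, condition (a)(2) becomes $m\alpha_k\leq m(d-\bb([k+1,n]))\leq m\beta_k$ together with $m^{3}(\beta_{s-1}-\alpha_t)(\beta_{s-1}-\beta_s)(\alpha_{t-1}-\alpha_t)<0$; both are equivalent to the $m=1$ statement, so $P_{[s,t]}\in\mathrm{Ass}(S/I^m)$ holds either for all $m$ or for none, giving (a). For $P_B$ with $B\subseteq[k+1,n]$, condition (b)(2) becomes $m\bb([k+1,n]\setminus B)+m\beta_k<md$ and $md\leq m\beta_k+m\bb([k+1,n])-|B|$; the first is $m$-independent, and the second rewrites as $m\bigl(\beta_k+\bb([k+1,n])-d\bigr)\geq|B|$, which — because $\beta_k+\bb([k+1,n])-d\geq 1$ by hypothesis — holds for all $m\geq|B|$. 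Hence for $m\gg 0$ one has $P_B\in\mathrm{Ass}(S/I^m)$ iff $\bb([k+1,n]\setminus B)+\beta_k<d$, which is (b). The case $P_{[s,k]\cup B}$ is the same: condition (c)(2) splits into the $m$-independent inequalities $\bb([k+1,n]\setminus B)+\max\{\beta_{s-1},\alpha_k\}<d$ and $\beta_{s-1}<\beta_s$, plus the eventually-true inequality $m\bigl(\beta_k+\bb([k+1,n])-d\bigr)\geq|B|$, giving (c).

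Finally I would assemble the ``moreover'' statement. For each prime $P$ of one of the three admissible shapes the analysis above shows that membership of $P$ in $\mathrm{Ass}(S/I^m)$ is eventually constant in $m$ (constant from $m=1$ in case (a); monotone-then-constant in cases (b) and (c), since increasing $m$ preserves the only $m$-dependent inequality $m(\beta_k+\bb([k+1,n])-d)\geq|B|$). As the family of admissible shapes is finite and contains $\mathrm{Ass}(S/I^m)$ for every $m$, it follows that $\mathrm{Ass}(S/I^m)$ is eventually constant, equal to the set of primes satisfying one of (a), (b), (c), and this stable set is by definition $\mathrm{Ass}^{\infty}(S/I)$. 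I do not expect a serious obstacle: the substance sits entirely in Propositions~\ref{assright} and \ref{power}, and the remaining work is the bookkeeping of which inequalities are homogeneous in $m$ (hence $m$-free) versus which acquire the free additive constant $|B|$ (hence become vacuous for large $m$). The only points needing care are confirming that passing to $I^m$ does not disturb the hypotheses of Proposition~\ref{assright} (the strict inequalities $\alpha_i<\beta_i$ and $I^m\neq 0$), and invoking finiteness of the admissible family to justify the phrase ``for all $m\gg 0$''.
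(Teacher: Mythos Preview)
Your proposal is correct and follows essentially the same route as the paper: apply Proposition~\ref{power} so that $I^m$ is again a right PLP-polymatroidal ideal with parameters scaled by $m$, then feed each of the three shapes from Corollary~\ref{asspossible} through Proposition~\ref{assright} and observe which inequalities are homogeneous in $m$ and which reduce to $m(\beta_k+\bb([k+1,n])-d)\geq|B|$ and hence hold for large $m$. The paper's proof is a two-line summary of exactly this computation; your version simply unpacks the bookkeeping (including the integrality remark $\beta_k+\bb([k+1,n])-d\geq 1$) and makes the finiteness argument for the ``moreover'' clause explicit.
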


\begin{proof} By the assumption that  $d< \beta_k+\bb([k+1,n])$, we have  $md\leq m\beta_k+m\bb([k+1,n])-|B|$ for all  $m\gg 0$. By this fact and in view of Propositions~\ref{assright} and~\ref{power},   the assertions (a), (b) and (c) follow. Finally the last statement follows from the last sentence of Proposition~\ref{assright}.
\end{proof}

\begin{Running Example} \em{ We continue the running example. By Proposition~\ref{assright}, we obtain:
$$\mathrm{Ass}(S'/J')=\{P_{[5,6]},P_5,P_6 \}\cup \{P_7,P_8\}\cup \{P_{[5,7]},P_{\{5,6,8\}},P_{[6,7]},P_{\{6,8\}}\};$$
and
$$\mathrm{Ass}^{\infty}(S'/J')=\mathrm{Ass}(S'/(J')^2)=\mathrm{Ass}(S'/J')\cup\{P_{[3,4]}\}\cup\{P_{[1,4]},P_{[2,4]}\}.$$
}
Here $P_i:=P_{\{i\}}=(x_i)$. In particular, $$\mathrm{astab}(I')=\mathrm{astab}(J')=2.$$
\end{Running Example}
\begin{Corollary} \label{special}
  Suppose that $\alpha_i<\beta_i$ for $i=1,\cdots,k$ and $d< \beta_k+\bb([k+1,n])$. Then
\[\mathrm{astab}(I)=\lceil \frac{n-k}{\beta_k+\bb([k+1,n])-d} \rceil.
\]
\end{Corollary}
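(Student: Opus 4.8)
\emph{Approach.} The plan is to compute $\mathrm{Ass}(S/I^m)$ for every $m\geq1$ and read off the first power from which it is constant. By Proposition~\ref{power}, $I^m$ is again a right PLP-polymatroidal ideal, of type $(\mathbf 0,m\bb\,|\,m{\bm\alpha},m{\bm\beta})$, and since $\alpha_i<\beta_i$ forces $m\alpha_i<m\beta_i$, both Corollary~\ref{asspossible} and Proposition~\ref{assright} apply to $I^m$. Thus every $P\in\mathrm{Ass}(S/I^m)$ is of the form $P_{[s,t]}$ with $[s,t]\subseteq[1,k]$, or $P_B$ with $B\subseteq[k+1,n]$, or $P_{[s,k]\cup B}$ with $s\leq k$ and $B\subseteq[k+1,n]$. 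Put $c=\beta_k+\bb([k+1,n])-d$, a positive integer by hypothesis, and $m_0=\lceil(n-k)/c\rceil$, i.e.\ the least integer with $m_0c\geq n-k$.

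\emph{The $m$-dependence of the three criteria.} Substituting $(d,{\bm\alpha},{\bm\beta},\bb)\mapsto m\,(d,{\bm\alpha},{\bm\beta},\bb)$ into the criterion of Proposition~\ref{assright}(a) rescales each relevant inequality by a positive power of $m$, so $P_{[s,t]}\in\mathrm{Ass}(S/I^m)$ holds either for all $m\geq1$ or for no $m$. For $B\subseteq[k+1,n]$, part~(b) applied to $I^m$ reads
\[
m\,\bb([k+1,n]\setminus B)+m\beta_k<md\leq m\beta_k+m\bb([k+1,n])-|B|;
\]
the left inequality is $m$-independent, and — the key point — the cardinality $|B|$ of the set of inverted variables is \emph{not} rescaled, so the right inequality is exactly $mc\geq|B|$. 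The identical manipulation of part~(c) shows that $P_{[s,k]\cup B}\in\mathrm{Ass}(S/I^m)$ holds if and only if an $m$-independent condition on $(s,B)$ holds \emph{and} $mc\geq|B|$. Hence $\mathrm{Ass}(S/I^m)$ arises from a fixed set of primes by imposing the single extra constraint $mc\geq|B|$ on the two families indexed by $B$.

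\emph{The two inequalities.} Since $|B|\leq n-k\leq m_0c\leq mc$ whenever $m\geq m_0$, the constraints $mc\geq|B|$ are automatically satisfied once $m\geq m_0$; therefore $\mathrm{Ass}(S/I^m)$ equals one and the same set $\mathcal A$ for all $m\geq m_0$ (and $\mathcal A=\mathrm{Ass}^\infty(S/I)$, consistently with Proposition~\ref{assright2}), which gives $\mathrm{astab}(I)\leq m_0$. For the reverse inequality — needed only when $m_0\geq2$ — I would take the maximal graded ideal $P_{[1,n]}=P_{[1,k]\cup[k+1,n]}$ as a witness. Applying part~(c) with $s=1$ and $B=[k+1,n]$, the $m$-independent condition reads $\max\{\beta_0,\alpha_k\}=\alpha_k<\beta_k\leq d$ together with $\beta_0=0<\beta_1$, both of which hold, so $P_{[1,n]}\in\mathcal A$; but here $|B|=n-k$, while $(m_0-1)c<n-k$ by the choice of $m_0$, so $P_{[1,n]}\notin\mathrm{Ass}(S/I^{m_0-1})$ (Proposition~\ref{assright} applied to $I^{m_0-1}$, which is legitimate since $m_0-1\geq1$, so the strict inequalities persist after scaling). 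Consequently $\mathrm{Ass}(S/I^{m_0-1})\neq\mathcal A=\mathrm{Ass}(S/I^{m_0})$, which excludes $\mathrm{astab}(I)\leq m_0-1$; combined with the upper bound this yields $\mathrm{astab}(I)=m_0$.

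\emph{Expected difficulty.} There is no conceptual obstacle: the argument is bookkeeping layered on Propositions~\ref{assright}, \ref{assright2} and~\ref{power}. The only point demanding care — and precisely the reason the stabilization index is nontrivial — is keeping straight which data scale with the power $m$ (the degree $d$ and the bound vectors $\bb,{\bm\alpha},{\bm\beta}$) and which do not (the size $|B|$ of the inverted variable set), so that the lone surviving $m$-dependent inequality is $mc\geq|B|$ with $|B|$ capped by $n-k$.
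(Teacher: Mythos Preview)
Your proof is correct and follows essentially the same route as the paper's: both arguments use Proposition~\ref{power} to reduce to Proposition~\ref{assright} applied to $I^m$, observe that after scaling the only surviving $m$-dependent constraint is $mc\geq|B|$, and take the maximal ideal $P_{[1,n]}$ (with $|B|=n-k$) as the witness establishing the lower bound $\mathrm{astab}(I)\geq m_0$. Your write-up is in fact more explicit than the paper's, which dispatches the upper bound with ``this is clear by checking it for three classes of ideals'' without spelling out the scaling analysis.
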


\begin{proof}
Set $\Delta=\lceil \frac{n-k}{\beta_k+\bb([k+1,n])-d} \rceil$. In view of Proposition~\ref{assright2}(c), we have $P_{[n]}=P_{[k]\cup [k+1,n]}\in \mathrm{Ass}^{\infty}(S/I)$, since $\beta_0=0<\beta_1$. Moreover, $P_{[n]}\in \mathrm{Ass}(S/I^m)$ if and only if $m\geq \Delta$  by  Proposition~\ref{assright}(c) and Proposition~\ref{power}. This implies $\mathrm{astab}(I)\geq \Delta$.

It remains to be shown that $P\in \mathrm{Ass}(S/I^m)$ for any $m\geq \Delta$ and for any $P\in \mathrm{Ass}^{\infty}(S/I)$.
 But this is clear by checking it for three classes of ideals  in $\mathrm{Ass}^{\infty}(S/I)$ given in Proposition~\ref{assright2} respectively.
\end{proof}

We remark that if $k=1$, then Corollary~\ref{special} is equivalent to  \cite[Corollary 4.6]{HRV}.
In the proof of the following result we will use the following fact: if $I$ is a monomial ideal in $S=K[x_1,\ldots,x_n]$, then $\mathrm{Ass}(T/IT)=\{PT\:\; P\in \mathrm{Ass}(S/I)\}$ for any polynomial ring $T=K[x_1,\ldots,x_m]$ with $m\geq n$. As before we set $\delta=\lfloor \frac{\alpha_k}{\beta_k}\rfloor$.

\begin{Theorem} \label{astabright} \begin{enumerate}
\item[(a)] If $d=\beta_k+\bb([k+1,n])$, then $\mathrm{astab}(I)=1$.

\item[(b)] If $d<\beta_k+\bb([k+1,n])$, then  $\mathrm{astab}(I)=\lceil \frac{n-k-\delta}{\beta_k+\bb([k+1,n])-d} \rceil$.

\end{enumerate}
\end{Theorem}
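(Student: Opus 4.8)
The plan is to reduce the statement, in both parts, to ideals whose associated primes are already under control: lattice path polymatroidal ideals (which have $\mathrm{astab}=1$) and the right PLP-polymatroidal ideals with strict prefix inequalities treated in Corollary~\ref{special}. For part (a), where $d=\beta_k+\bb([k+1,n])$, I would start from the factorization recorded in the proof of Corollary~\ref{dstabright}(a): $I=x_{k+1}^{b_{k+1}}\cdots x_n^{b_n}\,L$ with $L$ a lattice path polymatroidal ideal in $K[x_1,\dots,x_k]$. Then $I^m=x_{k+1}^{mb_{k+1}}\cdots x_n^{mb_n}\,L^m$, the two factors have disjoint supports, and Lemma~\ref{H}(b) gives $\mathrm{Ass}(S/I^m)=\{(x_{k+1}),\dots,(x_n)\}\cup\mathrm{Ass}(S/L^m)$. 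By Proposition~\ref{power} each $L^m$ is again a lattice path polymatroidal ideal whose type is the $m$-fold scaling of that of $L$, and the strict inequalities in the criterion of Proposition~\ref{lattice} (together with Lemma~\ref{form}) are insensitive to this scaling, so $\mathrm{Ass}(S/L^m)=\mathrm{Ass}(S/L)$ for all $m$. Hence $\mathrm{Ass}(S/I^m)$ is independent of $m$ and $\mathrm{astab}(I)=1$.

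For part (b), where $d<\beta_k+\bb([k+1,n])$, I would invoke Discussion~\ref{product}: writing $i_1<\dots<i_s$ for the elements of $\{i\le k:\alpha_i=\beta_i\}$, we have $I=I_1\cdots I_s\,J$ with pairwise disjoint supports, each $I_t$ a lattice path polymatroidal ideal supported on $[i_{t-1}+1,i_t]$ (with $i_0=0$) and $J$ supported on $[i_s+1,n]$. Since the factorization is preserved under taking powers, Lemma~\ref{H}(b) gives $\mathrm{Ass}(S/I^m)=\big(\bigcup_{t}\mathrm{Ass}(S/I_t^m)\big)\cup\mathrm{Ass}(S/J^m)$; as in part (a) each $\mathrm{Ass}(S/I_t^m)=\mathrm{Ass}(S/I_t)$ is constant, and these fixed primes are supported in $[1,i_s]$ while those of $\mathrm{Ass}(S/J^m)$ are supported in $[i_s+1,n]$, so the union is disjoint. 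Therefore $\mathrm{astab}(I)=\mathrm{astab}(J)$, computed with $J$ as an ideal of $R=K[x_{i_s+1},\dots,x_n]$ and using that $\mathrm{Ass}(S/J^m)=\{PS:P\in\mathrm{Ass}(R/(R\cap J)^m)\}$.

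It then remains to identify $\mathrm{astab}(J)$, and here the key point is that in either of the two possible shapes of $J$ the outcome is governed by $\delta=\lfloor\alpha_k/\beta_k\rfloor\in\{0,1\}$. If $\alpha_k<\beta_k$ (equivalently $\delta=0$), the description of $J$ in Discussion~\ref{product} presents it as a right PLP-polymatroidal ideal in $n-i_s$ variables of degree $d-\beta_{i_s}$, with ``$k$'' replaced by $k-i_s$, prefix bound $\beta_k-\beta_{i_s}$, and remaining upper-bound sum $\bb([k+1,n])$; one checks that all prefix inequalities become strict (using $\alpha_i\ge\alpha_{i_s}=\beta_{i_s}$, hence $\beta_i>\beta_{i_s}$, for $i_s<i\le k$) and that $d<\beta_k+\bb([k+1,n])$ translates to the analogous strict inequality for $J$, so Corollary~\ref{special} yields $\mathrm{astab}(J)=\lceil\frac{(n-i_s)-(k-i_s)}{(\beta_k-\beta_{i_s})+\bb([k+1,n])-(d-\beta_{i_s})}\rceil=\lceil\frac{n-k}{\beta_k+\bb([k+1,n])-d}\rceil$. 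If $\alpha_k=\beta_k$ (equivalently $\delta=1$), then $J$ is a Veronese type ideal in the $n-k$ variables $x_{k+1},\dots,x_n$ of degree $d-\beta_k<\bb([k+1,n])$ with bounds $b_{k+1},\dots,b_n$, and \cite[Corollary~4.6]{HRV} (equivalently the $k=1$ case of Corollary~\ref{special}) gives $\mathrm{astab}(J)=\lceil\frac{(n-k)-1}{\bb([k+1,n])-(d-\beta_k)}\rceil=\lceil\frac{n-k-1}{\beta_k+\bb([k+1,n])-d}\rceil$. In both cases this equals $\lceil (n-k-\delta)/(\beta_k+\bb([k+1,n])-d)\rceil$, which is the claimed value of $\mathrm{astab}(I)$.

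The main obstacle here is not conceptual but is the bookkeeping in the reduction for part (b): verifying carefully that after the localization/factorization of Discussion~\ref{product} the ideal $J$ really is a right PLP-polymatroidal ideal with \emph{strict} prefix inequalities, and that its transformed numerical data ($n-i_s$, $k-i_s$, $d-\beta_{i_s}$, $\beta_k-\beta_{i_s}$, and bound-sum $\bb([k+1,n])$) recombine to give exactly the numerator $n-k-\delta$ and denominator $\beta_k+\bb([k+1,n])-d$, while the extra but $m$-independent associated primes contributed by the lattice path factors $I_t$ are checked not to interfere with $\mathrm{astab}$.
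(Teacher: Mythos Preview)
Your proposal is correct and follows essentially the same route as the paper: part (a) via the factorization $I=x_{k+1}^{b_{k+1}}\cdots x_n^{b_n}L$ with $L$ a lattice path polymatroidal ideal (so $\mathrm{astab}(L)=1$), and part (b) via the disjoint-support factorization $I=I_1\cdots I_s\,J$ of Discussion~\ref{product}, Lemma~\ref{H}(b), and then Corollary~\ref{special} (when $\alpha_k<\beta_k$) or \cite[Corollary~4.6]{HRV} (when $\alpha_k=\beta_k$) applied to $J$. Your write-up simply spells out in more detail the bookkeeping that the paper leaves implicit (strictness of the prefix inequalities for $J$, disjointness of the associated-prime contributions, and the matching of numerical parameters).
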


\begin{proof} (a) As in the proof  of Theorem~\ref{depth2}, $I=x_{k+1}^{b_{k+1}}\cdots x_n^{b_n}J$, where $J$ is a lattice path polymatroidal ideal. By Proposition~\ref{lattice}, we have $\mathrm{astab}(J)=1$ and so $\mathrm{astab}(I)=1$.

(b) Let $I_1,\ldots,I_s,J$ and $S_1,\ldots,S_s, R$ be as in Discussion~\ref{product}. Since $S_i\cap I_i$ is a lattice path polymatroidal ideal, we have $\mathrm{astab}(S_i\cap I_i)=1$ for  $i=1,\ldots,s$. It follows that $\mathrm{astab}(I)=\mathrm{astab}(J\cap R)$ by Lemma~\ref{H}(b). There are two cases to consider.

In the case when $\alpha_k=\beta_k$ we have  $\mathrm{astab}(R\cap J)=\lceil \frac{n-k-1}{\beta_k+\bb([k+1,n])-d} \rceil$ by \cite[Corollary 4.6]{HRV},
and in the case when $\alpha_k<\beta_k$ we have  $\mathrm{astab}(R\cap J)=\lceil \frac{n-k}{\beta_k+\bb([k+1,n])-d}\rceil$ by Corollary~\ref{special}.
This yields the desired formula.
\end{proof}

We conclude this  section and this paper by the following result.

\begin{Corollary} Let $I$ be a right PLP-polymatroidal ideal. Then $\astab(I)=\dstab(I)$.

\end{Corollary}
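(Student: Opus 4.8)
The plan is simply to compare the two explicit formulas that have already been established. Recall the standing assumption $d\leq \beta_k+\bb([k+1,n])$, made to guarantee $I\neq (0)$. Hence exactly one of the two alternatives $d=\beta_k+\bb([k+1,n])$ or $d<\beta_k+\bb([k+1,n])$ occurs, and it suffices to treat each case.

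First I would dispose of the case $d=\beta_k+\bb([k+1,n])$: by Corollary~\ref{dstabright}(a) we have $\dstab(I)=1$, while by Theorem~\ref{astabright}(a) we have $\astab(I)=1$, so the two coincide. In the remaining case $d<\beta_k+\bb([k+1,n])$, set $\delta=\lfloor\frac{\alpha_k}{\beta_k}\rfloor$ as before. Then Corollary~\ref{dstabright}(b) gives
\[
\dstab(I)=\Big\lceil \frac{n-k-\delta}{\beta_k+\bb([k+1,n])-d} \Big\rceil,
\]
and Theorem~\ref{astabright}(b) gives
\[
\astab(I)=\Big\lceil \frac{n-k-\delta}{\beta_k+\bb([k+1,n])-d} \Big\rceil,
\]
which is the same value. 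In either case $\astab(I)=\dstab(I)$, as claimed.

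There is essentially no obstacle here beyond having the two formulas in hand; the only point requiring care is to note that the standing hypothesis $d\leq \beta_k+\bb([k+1,n])$ makes the two cases above exhaustive, so that the comparison covers every right PLP-polymatroidal ideal.
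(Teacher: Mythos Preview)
Your proof is correct and follows exactly the paper's approach: the paper's proof is the one-liner ``It follows from Theorem~\ref{astabright} as well as Corollary~\ref{dstabright},'' and you have simply spelled out the case-by-case comparison of the two formulas.
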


\begin{proof} It follows from  Theorem~\ref{astabright} as well as Corollary~\ref{dstabright}.
\end{proof}

{\bf \noindent Acknowledgement:} Thank the referee very much for his/her many suggestions which have improved the presentation of this paper a lot.

\end{document}